\documentclass[final,11pt,leqno]{amsart}

\usepackage{amsmath,amssymb,amscd,amsthm}
\usepackage{latexsym}
\usepackage{graphicx}

\usepackage{showkeys}

\headheight=8pt \topmargin=0pt \textheight=624pt \textwidth=432pt
\oddsidemargin=18pt \evensidemargin=18pt

\addtolength{\textwidth}{1cm} \addtolength{\hoffset}{-1cm}

\newtheorem{theorem}{Theorem}
\newtheorem{lemma}{Lemma}
\newtheorem{proposition}{Proposition}

\newtheorem{definition}{Definition}
\newtheorem{corollary}{Corollary}

\newtheorem{claim}{Claim}



\newcommand{\f}[2]{\frac{#1}{#2}}
\newcommand{\dpr}[2]{\langle #1,#2 \rangle}


\newcommand{\al}{\alpha}

\newcommand{\de}{\delta}

\newcommand{\ka}{\kappa}
\newcommand{\la}{\lambda}

\newcommand{\si}{\sigma}

\newcommand{\vp}{\varphi}
\newcommand{\om}{\omega}


\newcommand{\rone}{\mathbf R^1}



\newcommand{\cL}{\mathcal L}

\newcommand{\ct}{\mathbf T}

\newcommand{\cb}{\mathcal B}

\newcommand{\cc}{\mathcal C}
\newcommand{\ch}{\mathcal H}


\newcommand{\p}{\partial}


\newcommand{\beq}{\begin{equation}}
\newcommand{\eeq}{\end{equation}}
\newcommand{\beqna}{\begin{eqnarray*}}
\newcommand{\eeqna}{\end{eqnarray*}}
\newcommand{\beqn}{\begin{equation*}}
\newcommand{\eeqn}{\end{equation*}}
\newcommand{\bp}{\begin{proof}}
\newcommand{\ep}{\end{proof}}
\newcommand{\bprop}{\begin{proposition}}
\newcommand{\eprop}{\end{proposition}}
\newcommand{\bt}{\begin{theorem}}
\newcommand{\et}{\end{theorem}}
\newcommand{\bex}{\begin{Example}}
\newcommand{\eex}{\end{Example}}
\newcommand{\bc}{\begin{corollary}}
\newcommand{\ec}{\end{corollary}}
\newcommand{\bcl}{\begin{claim}}
\newcommand{\ecl}{\end{claim}}
\newcommand{\bl}{\begin{lemma}}
\newcommand{\el}{\end{lemma}}

\begin{document}

\title
[Linear stability for periodic waves of Boussinesq  and KGZ]
{Linear stability analysis for periodic traveling waves of the  Boussinesq equation and the KGZ system }

\author{Sevdzhan Hakkaev}
\author{Milena Stanislavova}
\author{Atanas Stefanov}

\address{Sevdzhan Hakkaev 
Faculty of Mathematics and Informatics, Shumen University, 9712
Shumen, Bulgaria} \email{shakkaev@fmi.shu-bg.net}
\curraddr{Yeditepe University
Fen - Edebiyat Fakultesi, Matematik Bolumu
26 Agustos Yerlesimi Kayisdagi Cad. 
81120 Kayisdagı - Istanbul, Turkey}
\address{Milena Stanislavova
Department of Mathematics, University of Kansas, 1460 Jayhawk
Boulevard,  Lawrence KS 66045--7523} \email{stanis@math.ku.edu}
\address{Atanas Stefanov
Department of Mathematics, University of Kansas, 1460 Jayhawk
Boulevard,  Lawrence KS 66045--7523}

\email{stefanov@math.ku.edu}

\thanks{Hakkaev supported in part by research grant  DDVU 02/91 of 2010 of the
Bulgarian Ministry of Education and Science.
Stanislavova  supported in part by NSF-DMS \# 0807894.
Stefanov supported in part by NSF-DMS \# 0908802.}

\date{\today}

\subjclass[2000]{35B35, 35B40, 35G30}

\keywords{periodic traveling waves,  Boussinesq equation, 
Klein-Gordon-Zakharov system}

\begin{abstract}
The question for linear stability of spatially periodic waves for the Boussinesq equation (the cases $p=2,3$) and the Klein-Gordon-Zakharov system is considered. For a wide class of solutions, we completely and explicitly characterize their linear stability (instability respectively), when the perturbations are taken with the same period $T$. In particular, our results allow us to completely recover the   linear stability results, in the limit $T\to \infty$,  for the whole line case.
\end{abstract}
\
\maketitle

\section{Introduction}
In this paper we will be interested in the stability of spatially periodic waves for certain models, which involve second derivative in time. Our interest
will be mainly in two PDE - the Boussinesq equation and the Klein-Gordon-Zakharov system, although the methods that we develop here will certainly
find applications in other
 related models.

The Cauchy problem for the Boussinesq equation, with periodic boundary conditions, is
\begin{equation}
\label{a:1}
u_{tt}+u_{xxxx}-u_{xx}+(f(u))_{xx}=0, \ \ (t,x)\in \rone_+\times [0,T],
\end{equation}
where $f(u)$ will for the most part be $f(u)=u^p, p>1$. This is a model  that was derived by Boussinesq, \cite{Bous}, for $p=2$, but was subsequently studied
by many authors, both in the periodic and whole line context. We now review the current results regarding  the well-posedness properties of the Boussinesq equation.
While we have a very satisfactory theory for the local solutions, see below, the  the  global well-posedness does not hold. More precisely, even if one requires smooth
compactly supported data, the solutions may develop singularities in finite time, \cite{Bona}. This makes the stability questions, which is the main subject of this
article even more relevant and interesting.

In the whole line scenario, local well-posedness was established by Bona and Sachs,  in the Sobolev spaces
$H^{\f{5}{2}+}(\rone)\times  H^{\f{3}{2}+}(\rone)$, \cite{Bona}.   Further contributions were made by Tsutsumi and Mathashi, \cite{TM},  Linares, \cite{Linares}
(who also showed global existence for small data). Farah, \cite{Farah} has shown well-posedness in $H^s(\rone)\times \tilde{H}^{s-2}(\rone)$, when $s>-1/4$ and
the space $\tilde{H}^\al$ is defined via  $\tilde{H}^\al=\{u: u_x \in H^{\al-1}(\rone)\}$.   Kishimoto and Tsugava, \cite{KT}  have finally shown well-posedness
for all $s>-1/2$, which is likely to be sharp.

Regarding the case of periodic boundary conditions, Fang and Grillakis, \cite{FG} who have established local well-posedness in $H^s(\ct)\times H^{s-2}(\ct)$, $s>0$ (when
  $1<p<3$ in \eqref{a:1}).  This result was  improved for $p=2$ to $s>-1/4$  by Farah and Scialom, \cite{FS}. Oh and Stefanov have recently shown
  local well-posedness in $H^s(\ct)\times H^{s-2}(\ct)$, $s>-3/8$, \cite{OS}.

 Our other main object of investigation  will be the Klein-Gordon-Zakharov system, which is given  by\footnote{The coefficient $\f{1}{2}$ in front
 of the non-linear term $(|u|^2)_{xx}$ is non-standard, but rather adopted for convenience of our presentation. In particular, it helps create a
 self-adjoint linearized operator, which otherwise can be achieved via a simple change of the time variable.}
  \begin{equation}
  \label{KGZ}
    \left\{ \begin{array}{ll}
      u_{tt}-u_{xx}+u+uv=0, \ \  (t,x)\in \rone\times \rone \ \ \textit{or}\ \ (t,x)\in \rone\times [0,T]\\
       n_{tt}- n_{xx}=\f{1}{2}(|u|^2)_{xx} \\
    \end{array} \right.
   \end{equation}
This system    describes the interaction of a
Langmuir wave and an ion sound wave in  plasma. More precisely,
$u$ is the  fast scale component of the electric field, whereas
$n$ denotes the deviation of ion density, \cite{Zakh, Dendy}. The system \eqref{KGZ} is locally well-posed in various function  spaces,
(see  Ginibre-Tsutsumi-Velo, \cite{GTV} and
Ozawa-Tsutaya-Tsutsumi, \cite{OTT}).  In our previous paper, \cite{HSS}, we have shown that the Cauchy problem \eqref{KGZ} is locally well-posed
(both in priodic and whole line contexts) in $H^\al \times H^{\al-1}\times H^{\al-1}\times H^{\al-2}$, whenever $\al>1/2$.  In  \cite{OTT} and  \cite{OTT1},
the authors have shown that small data produces solutions that persist globally, whereas large solutions are generally expected to blow up in finite time.

The stability of periodic traveling waves have been studied
extensively in the last decade. The nonlinear stability of
periodic waves for Koreteweg-de Vries eqtuation  based on the
Jacobi elliptic functions of cnoidal type   was considered in
\cite{ABS}, and for modified Korteweg-de Vries and nonlinear
Schrodinger equations of dnoidal type in\cite{An2}. In \cite{HIK}, the authors 
  considered the nonlinear stability of periodic waves for
generalized Benjamin-Bona-Mahony equation. Recently,  Arruda
\cite{Ar} considered the nonlinear stability of periodic traveling
waves for Boussinesq equation. The approach is based on the theory
developed in \cite{Be1, Bo} and \cite{GSS} for the stability of
solitary waves.

In this paper, we consider the linearized stability of periodic traveling  waves for the Boussinesq model (the cases $p=2,3$) and the Klein-Gordon-Zakharov system. These are the cases of second order in time models, for which we can explicitly write the solutions in elliptic functions (and moreover, we can explicitly compute the relevant portion of the  spectrum of the   linearized operators). While this certainly helps in the analysis,   we believe that our results should be generalizable to all values of $p>1$. 
The main tool is the theory for linearized stability for such models, developed recently by the second and third author, \cite{SS1}. 

The paper is organized as follows. In Section \ref{sec:11}, we present the construction of our main object of study - the periodic traveling waves. This is not a new material by any means, but we do it in order to single out the solutions of interest\footnote{note that there are solutions, which are not consider herein},  and to introduce some notations. In Section \ref{sec:21}, we setup the linear stability problem, after which we present the main results. In Section \ref{sec:2}, we outline the theory for linearized stability for second order ODE in \cite{SS1},   and point out to the relevant spectral theoretic results about their linearized operators. In Section \ref{sec:133}, we prove the main results - Theorems \ref{theo:1}, \ref{theo:2} about the Boussinesq model, while in Section \ref{sec:5}, we prove Theorem \ref{theo:10} about the KGZ system.

\section{Construction of the periodic traveling waves}
\label{sec:11}
In this section, we show a glimpse of
the construction of the periodic waves  - in Section \ref{sec:1.1}   for the Boussinesq equation (when $p=2,3$) and in Section  \ref{sec:1.2}  for the KGZ system.
\subsection{Construction of the traveling wave solutions for the Boussinesq equation}
\label{sec:1.1}
Applying the traveling wave ansatz,   one sees that there is an one-parameter family of traveling waves of the form $\vp(x-c t), |c|\in (-1,1)$, which obey the equation
$
\p_{xx}[c^2\vp+\vp''-\vp+f(\vp)]=0, \ \ 0\leq x\leq T.
$
whence, there exists $a,C$, so that
$$
c^2\vp+\vp''-\vp+f(\vp)=C x+a,  \ \ 0\leq x\leq T
$$
By the periodicity of $\vp$, we   conclude that $C=0$ and thus, we have a   family of waves   satisfying
\begin{equation}
\label{a:1010}
 \vp''-(1-c^2)\vp+f(\vp)=a, \ \ 0\leq x\leq T.
\end{equation}
We now construct   solutions of \eqref{a:1010} in various cases of interest, most notably $p=2$ and $p=3$. This material is not new, but in order to
introduce the particular parametrization that is convenient for us,   we include a sketch of the construction for completeness.

\subsubsection{The case $p=2$}
We will consider only the symmetric case $a=0$. For the nonlinearity,
 $f(u)=\frac{u^2}{2}$, we have (with $\omega=1-c^2$) 
\begin{equation}
\label{kdv1}
  -w\varphi+{\frac{1}{2}}\varphi^2+\varphi''=0.
\end{equation}
Therefore,
    \begin{equation}\label{kdv1a}
    \varphi'^2-\omega \varphi^2 +\frac13\varphi^3=b
   \end{equation}

Hence the periodic solutions are given by the periodic
trajectories $H(\varphi,\varphi')=b$ of the Hamiltonian vector
field $dH=0$ where
$$H(x,y)=y^2+{\frac{x^3}{3}}-wx^2.$$
The level set $H(x,y)=b$ contains  two periodic trajectories if
$w>0$, $b\in \left( -\frac{2}{3}w^3, 0\right)$ and a unique
periodic trajectory if $b>0$. Here we consider the cases where $b<0$ and
$\varphi_c>0$. To express $\varphi_c$ through elliptic functions,
we denote by $\varphi_1>\varphi_0>0$ the positive solutions of
$\frac13\rho^3-\omega \rho^2-b=0$. Then $\varphi_0\leq
\varphi_c\leq \varphi_0$ and one can rewrite (\ref{kdv1a}) as
\begin{equation}\label{kdv1b}
\varphi_c'^2=\frac13(\varphi_c-\varphi_0)(\varphi_1-\varphi_c)(\varphi_c+\varphi_0+\varphi_1-3\omega).
\end{equation}
Introducing a new variable $s\in(0,1)$ via
$\varphi_c=\varphi_0+(\varphi_1-\varphi_0)s^2$, we transform
(\ref{kdv1b}) into
$$s'^2=\alpha^2(1-s^2)(k'^2+k^2s^2)$$ where $\alpha$, $k$, $k'$
are positive constants ($k^2+k'^2=1$) given by
$$\alpha^2=\frac{2\varphi_1+\varphi_0-3\omega}{12}, \quad
k^2=\frac{\varphi_1-\varphi_0}{2\varphi_1+\varphi_0-3\omega}.$$
Therefore
\begin{equation}\label{kdv2}
\varphi_c(x)=\varphi_0+(\varphi_1-\varphi_0)cn^2(\alpha x;k).
\end{equation}
By the above formulas,
  \begin{equation}\label{kdv2a}
    \begin{array}{ll}
    \varphi_1-\varphi_0=12\alpha^2\kappa^2\\
    \\
    \varphi_1=4\alpha^2(1+\kappa^2)+w\\
    \\
    \varphi_0=4\alpha^2(1-2\kappa^2)+w\\
    \\
    w^2=16\alpha^4(1-\kappa^2+\kappa^4).
   \end{array}
   \end{equation}
The fundamental period of the cnoidal wave $\varphi_c$ in
(\ref{kdv2}) is
 \begin{equation}\label{kdv2b}
   T={\frac{2K(\kappa)}{\alpha}}={\frac{4K(\kappa)\sqrt[4]{1-\kappa^2+\kappa^4}}{\sqrt{w}}}, \; \; \; T\in \left(
   {\frac{2\pi}{\sqrt{w}}}, \infty \right).
 \end{equation}
 Here and below, $K(\kappa)$ and $E(\kappa)$ denote the elliptic
 integrals of the first and second kind in a Legendre form.
 Further, we will use the following relations
   $$
   K'(\kappa)={\frac{E(\kappa)-(1-\kappa^2)K(\kappa)}{\kappa
   (1-\kappa^2)}}, \quad
   E'(\kappa)={\frac{E(\kappa)-K(\kappa)}{\kappa}}.
   $$

\begin{lemma}\label{lkdv1}
For any $w>0$ and $T\in (\f{2\pi}{\sqrt{w}},\infty)$, there is a constant $b=b(w)$ such that
the periodic traveling solution (\ref{kdv2}) has period $T$. The
function $b(w)$ is differentiable.
\end{lemma}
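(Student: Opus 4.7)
The approach is to reparameterize the family of cnoidal waves \eqref{kdv2} by the elliptic modulus $\kappa\in(0,1)$ and to recover the constants $\alpha,\varphi_0,\varphi_1,b$ as smooth functions of $(w,\kappa)$. From the last identity in \eqref{kdv2a}, $\alpha^2=w/(4\sqrt{1-\kappa^2+\kappa^4})$, and the remaining three formulas then express $\varphi_0,\varphi_1$ smoothly in $(w,\kappa)$. Evaluating \eqref{kdv1a} at the turning point $\varphi=\varphi_0$ (where $\varphi'=0$) gives
\begin{equation*}
b=\tfrac{1}{3}\varphi_0^3-w\varphi_0^2,
\end{equation*}
which is likewise smooth in $(w,\kappa)$. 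Hence it suffices to solve the period equation \eqref{kdv2b}, rewritten as $F(\kappa)=T\sqrt{w}$ with
\begin{equation*}
F(\kappa):=4K(\kappa)(1-\kappa^2+\kappa^4)^{1/4},
\end{equation*}
for $\kappa$ as a $C^1$ function of $w$ with $T$ held fixed.

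For existence I would check the endpoint behavior of $F$: since $K(0)=\pi/2$, we have $F(0^+)=2\pi$, while $F(\kappa)\to+\infty$ as $\kappa\to 1^-$ because $K(\kappa)$ blows up logarithmically and $(1-\kappa^2+\kappa^4)^{1/4}$ remains in $[(3/4)^{1/4},1]$. Continuity of $F$ on $(0,1)$ together with the intermediate value theorem then produces, for every $T\in(2\pi/\sqrt{w},\infty)$, at least one $\kappa\in(0,1)$ solving $F(\kappa)=T\sqrt{w}$.

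The main obstacle is the strict monotonicity $F'(\kappa)>0$ on $(0,1)$, which is needed for both uniqueness of $\kappa$ and the nonvanishing derivative required by the implicit function theorem. A direct differentiation combined with the Legendre formula $K'(\kappa)=[E(\kappa)-(1-\kappa^2)K(\kappa)]/[\kappa(1-\kappa^2)]$ reduces $F'(\kappa)>0$ to the inequality
\begin{equation*}
(1-\kappa^2+\kappa^4)\,E(\kappa)\ >\ (1-\kappa^2)\bigl(1-\tfrac{1}{2}\kappa^2\bigr)\,K(\kappa),\qquad \kappa\in(0,1).
\end{equation*}
Since both sides equal $\pi/2$ at $\kappa=0$, this is delicate near that endpoint; however, the standard series expansions of $E$ and $K$ show that the two sides agree through order $\kappa^2$ and that the leading difference is of order $\kappa^4$ with positive sign, while at $\kappa=1$ the right side vanishes and the inequality is trivial. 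Once $F'>0$ is established throughout, $F:(0,1)\to(2\pi,\infty)$ is a $C^1$ diffeomorphism, so $\kappa(w)=F^{-1}(T\sqrt{w})$ is $C^1$ in $w$, and substituting back yields the required differentiable $b(w)$.
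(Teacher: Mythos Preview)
Your argument is essentially the same as the paper's: both reduce to the strict monotonicity of $\kappa\mapsto K(\kappa)(1-\kappa^2+\kappa^4)^{1/4}$, obtain the same numerator $2(1-\kappa^2+\kappa^4)E(\kappa)-(1-\kappa^2)(2-\kappa^2)K(\kappa)$, and invoke the implicit function theorem. The only difference is that you parameterize directly by $(w,\kappa)$ and read off $b=b(w)$ at the end, whereas the paper parameterizes by $(w,b)$ and additionally computes $\partial(\kappa^2)/\partial b$ to close the chain rule; your route is marginally cleaner, and your endpoint/series discussion of the key inequality actually goes a bit further than the paper, which simply asserts positivity.
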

\begin{proof}
It is easily seen that the period $T$ is a strictly
increasing function of $k$:
\begin{eqnarray*}
\frac{d}{dk}(\sqrt[4]{1-\kappa^2+\kappa^4}K(\kappa))&=& \frac{\kappa(2\kappa^2-1)K(\kappa)+2(1-\kappa^2+\kappa^4)K'(\kappa)}{4\sqrt[4]{1-\kappa^2+\kappa^4}^3}\\
\\
&=& \frac{2(1-\kappa^2+\kappa^4)E(\kappa)+(1-\kappa^2)(\kappa^2-2)K(\kappa)}{4\kappa
(1-\kappa^2)\sqrt[4]{1-\kappa^2+\kappa^4}^3}>0.
\end{eqnarray*}
Given $w$ and $b$ in their range, consider the functions
$\varphi_0(w,b), \varphi_1(w,c)$, $k(w,b)$ and $T(w,b)$ given by
the formulas \eqref{kdv2b} and \eqref{kdv2a}.  We obtain
$$\frac{\partial T}{\partial b}=\frac{dT}{dk} \frac{dk}{db}=
\frac{1}{2k}\frac{dT}{dk}\frac{d(k^2)}{db}.$$
Further, using that
${\frac{1}{3}}\varphi_0^3-\omega
\varphi_0^2={\frac{1}{3}}\varphi_1^3-\omega \varphi_1^2$,we have
\begin{eqnarray*}
\frac{d(k^2)}{db}&=& {\frac{3(\varphi_0-w){\frac{\partial \varphi_1}{\partial b}}-3(\varphi_1-w){\frac{\partial \varphi_0}{\partial c}}}{(2\varphi_1+\varphi_0-3w)^2}}\\
\\
&=&{\frac{3w^2(\varphi_0-\varphi_1)}{(\varphi_0^2-2w
\varphi_0)(\varphi_1^2-2w \varphi_1)(2\varphi_1+\varphi_0-3w)^2}}.
\end{eqnarray*}
We see that $\f{\partial T(\omega,b)}{\partial b}\neq 0$,  whence the
implicit function theorem implies the result.
\end{proof}

\subsubsection{The case $p=3$}
We   consider  the "symmetric" case $a=0$ only, with $f(u)=u^3$.
Denote $w=1-c^2\in (0,1)$.
Multiplying by $\vp'$ and integrating implies
       \begin{equation}\label{mkdv1}
         \varphi'^{2}=b+w\varphi^2-{\frac{\varphi^4}{2}}.
       \end{equation}
       Hence the periodic solutions are given by the periodic
trajectories $H(\varphi,\varphi')=b$ of the Hamiltonian vector
field $dH=0$ where
$$
H(x,y)=y^2+{\frac{x^4}{4}}-w\frac{x^2}{2}.
$$
 Then
there are two possibilities:
\begin{itemize}
\item({\it outer case}): for any $b>0$ the
orbit defined by $H(\varphi,\varphi')=b$ is periodic and
oscillates around the eight-shaped loop $H(\varphi,\varphi')=0$
through the saddle at the origin.

\item({\it left and right cases}): for any
$b\in(-\frac12w^2,0)$ there are two periodic orbits defined by
$H(\varphi,\varphi')=b$ (the left and right ones). These are
located inside the eight-shaped loop and oscillate around the
centers at $(\mp\sqrt{w},0)$, respectively.
\end{itemize}
We will consider the left and right cases
of Duffing oscillator only. In these cases,   denote by
$\varphi_1>\varphi_0>0$ the positive roots of the quartic equation
$\frac{z^4}{2}-wz^2-b=0$. Then, up to a translation,
we obtain the respective explicit formulas
\begin{equation}\label{mkdv2}
\varphi(z)=\mp \varphi_1 dn(\alpha z; k),\quad
k^2=\frac{\varphi_1^2-\varphi_0^2}{\varphi_1^2}
=\frac{2\varphi_1^2-2w}{\varphi_1^2}, \quad
\alpha={\frac{\varphi_1}{\sqrt{2}}}, \quad T=\frac{2K(k)}{\alpha}.
\end{equation}
Note that the fundamental period $T$  may be written as follows
\begin{equation}
\label{z:200}
T=\f{2K(\ka)}{\al}=\f{2 K(\ka) \sqrt{2-\ka^2}}{\sqrt{w}},\ \  T\in
\left(\f{\sqrt{2} \pi}{\sqrt{w}}, \infty\right).
\end{equation}
Note that this is a two parameter family of solutions, parametrized explicitly in this case by $\vp_0, \vp_1$, although we shall need a different
parametrization. In fact,
we would like to think of this family as being parametrized (implicitly) in terms of $T$ and $w$, where these two will be independent of each other.
We have the following

\begin{lemma}\label{lmkdv1}  For  $T>\f{\sqrt{2} \pi}{\sqrt{w}}$, there is a constant
$b=b(w)$ such that the periodic traveling-wave solution
$(\ref{mkdv2})$ determined by $H(\varphi,\varphi')=b(w)$ has a
period $T$. In addition, the function $b(w)$ is differentiable.
\end{lemma}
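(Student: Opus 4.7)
The plan is to mimic closely the proof of Lemma \ref{lkdv1}. From \eqref{z:200}, we already have $T$ expressed in the form
\begin{equation*}
T(w,b)=\f{2K(\ka)\sqrt{2-\ka^2}}{\sqrt{w}},
\end{equation*}
so all the $w$--dependence is explicit once we understand how $\ka$ depends on $b$ (with $w$ fixed). As in the previous lemma, I will first show that $T$ is a strictly monotone function of the modulus $\ka\in(0,1)$, then show $\ka$ is a strictly monotone function of $b$, and finally invoke the implicit function theorem.

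\textbf{Step 1 (monotonicity in $\ka$).} Setting $g(\ka):=\sqrt{2-\ka^2}\,K(\ka)$ and using the identity for $K'(\ka)$ recalled just before Lemma \ref{lkdv1}, a direct computation gives
\begin{equation*}
g'(\ka)=\f{(2-\ka^2)E(\ka)-2(1-\ka^2)K(\ka)}{\ka(1-\ka^2)\sqrt{2-\ka^2}}.
\end{equation*}
Positivity of the numerator $h(\ka):=(2-\ka^2)E(\ka)-2(1-\ka^2)K(\ka)$ is the step I expect to be the main obstacle. I would establish it by noting $h(0)=0$ and differentiating again, using both recalled identities $K',E'$ to simplify. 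A patient cancellation reduces $h'(\ka)$ to a clean expression of the form $3\ka[K(\ka)-E(\ka)]$, which is strictly positive for $\ka\in(0,1)$. Hence $h>0$ on $(0,1)$ and $T$ is strictly increasing in $\ka$.

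\textbf{Step 2 (monotonicity of $\ka$ in $b$).} From \eqref{mkdv2} we have $\ka^2=(\vp_1^2-\vp_0^2)/\vp_1^2=2-2w/\vp_1^2$, where $\vp_1^2$ is the larger root of $u^2/2-wu-b=0$, namely $\vp_1^2=w+\sqrt{w^2+2b}$. For the admissible range $b\in(-w^2/2,0)$, differentiating gives
\begin{equation*}
\f{d(\ka^2)}{db}=\f{2w}{\sqrt{w^2+2b}\,(w+\sqrt{w^2+2b})^2}>0.
\end{equation*}

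\textbf{Step 3 (implicit function theorem).} By the chain rule,
\begin{equation*}
\f{\p T}{\p b}=\f{dT}{d\ka}\,\f{d\ka}{db}=\f{1}{2\ka}\,\f{dT}{d\ka}\,\f{d(\ka^2)}{db}\neq 0,
\end{equation*}
by Steps 1 and 2. As $b$ ranges over $(-w^2/2,0)$, one checks that $\ka$ ranges over $(0,1)$ and hence $T(w,b)$ ranges over $(\sqrt{2}\pi/\sqrt{w},\infty)$. Therefore for any $T$ in this interval the implicit function theorem applied to $T(w,b)-T=0$ yields a unique, differentiable function $b=b(w)$ with the required property.
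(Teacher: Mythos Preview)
Your proof is correct. The paper itself does not give a proof of this lemma---it simply refers the reader to Lemma~3.1 in \cite{HaIlKi}---so there is no ``paper's proof'' to compare against directly; your argument follows exactly the template of the surrounding Lemmas~\ref{lkdv1} and~\ref{lk}, and in fact your derivative $g'(\ka)$ coincides (after using the identities for $K',E'$) with the compact form $\dfrac{K'(\ka)+E'(\ka)}{\sqrt{2-\ka^2}}$ that the paper records in the proof of Lemma~\ref{lk}. Your extra step of verifying $h(0)=0$ and $h'(\ka)=3\ka[K(\ka)-E(\ka)]>0$ supplies a justification for the positivity that the paper leaves implicit there.
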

For the proof, see Lemma 3.1 in \cite{HaIlKi}.
\subsection{Construction of the traveling wave solutions for the KGZ system}
\label{sec:1.2}
We are looking for $T$ periodic traveling solutions of the Klein-Gordon-Zakharov system, \eqref{KGZ}.  Thus, we take the ansatz $u(t,x)=\vp_c(x-c t),
n(t,x)=\psi_c(x-c t)$, where we take the speed $c\in (-1,1)$. Plugging  into \eqref{KGZ}, we obtain the following relation between $\psi$ and $\vp$
$$
(c^2-1) \psi''=\f{1}{2}(\vp^2)''\ \ \ 0\leq x\leq T.
$$
Two integrations in $x$  imply
$$
(c^2-1) \psi(x)=\f{1}{2}\vp^2(x)+b x+a,
$$
for some constants $a,b$.
By the periodicity we have that  $b=0$, whence $\psi(x)=-\f{\vp^2(x)+a}{2(1-c^2)}$. For simplicity, we shall consider the case $a=0$ only. That is
\begin{equation}
\label{k:1}
\psi_c=-\f{\vp_c^2}{2(1-c^2)}
\end{equation}
Returning back to the other equation in \eqref{KGZ} and using the relation \eqref{k:1}, we obtain the following equation for $\vp_c$
\begin{equation}
\label{k:5}
-(1-c^2) \vp_c''+\vp_c- \f{\vp_c^3}{2(1-c^2)}=0 \ \ \ 0\leq x\leq T.
\end{equation}
Thus,  as in (1.1.2) after multiplying by $\vp_c$ and integrating we get
  \begin{equation}\label{k:5a}
    \vp_c'^2=b+{\frac{\vp_c^2}{w}}-{\frac{\vp_c^4}{4w^2}}={\frac{1}{4w^2}}(4w^2b+4w\vp_c^2-\vp_c^4).
   \end{equation}
   Denote by $\vp_1>\vp_0>0$ the positive roots of the polynomial $P(z)=z^4-4wz^2-4w^2b.$ Then (\ref{k:5a}) can be written in the form

     \begin{equation}
     \label{k:5b}
       \vp_c'^2={\frac{1}{4w^2}}(\vp_c^2-\vp_0^2)(\vp_1^2-\vp_c^2).
     \end{equation}
     and the solution of \eqref{k:5a} is given by

\begin{equation}
\label{k:10}
\vp_c(x)=\vp_1 dn(\alpha x, \kappa),
\end{equation}
where
  \begin{equation}\label{k:10a}
     \vp_1^2+\vp_0^2=4w, \quad \alpha={\frac{\vp_1}{2w}}, \quad \kappa^2={\frac{\vp_1^2-\vp_0^2}{\vp_1^2}}.
   \end{equation}
   Moreover,
     \begin{equation}\label{k:10c}
       (2-\kappa^2)\vp_1^2=4w, \quad \alpha={\frac{1}{\sqrt{w(2-\kappa^2)}}}, \quad  4w^2b=4w\vp_1^2-\vp_1^4.
     \end{equation}
   Since $dn$ has fundamental period $2K(\kappa)$, then the solution $\vp_c$ has fundamental period $T=\frac{2K(\kappa)}{\alpha}$. In terms of $\ka, w$,  given by
     \begin{equation}\label{k:10b}
       T= 2K(\ka) \sqrt{2-\ka^2} \sqrt{w} , \quad T\in I= ( \sqrt{2}\pi\sqrt{w}  , \infty)
     \end{equation}
\begin{lemma}\label{lk}
   For any $w>0$ and $T>\sqrt{2}\pi\sqrt{w}  $, there is a constant $b=b(w)$ such that the periodic traveling solution (\ref{k:10}) has period $T$.
 \end{lemma}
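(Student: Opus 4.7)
The plan follows the template of Lemma \ref{lkdv1}. Fix $w>0$ and view the fundamental period
\[
T(\ka,w) \;=\; 2K(\ka)\sqrt{(2-\ka^2)w}
\]
as a function of the elliptic modulus $\ka\in(0,1)$. By \eqref{k:10a} and \eqref{k:10c} the pair $(w,\ka)$ determines $\vp_1,\vp_0$ and hence $b$, so it suffices to prove that for each fixed $w>0$ the map $\ka\mapsto T(\ka,w)$ is a smooth strictly increasing bijection of $(0,1)$ onto $\bigl(\sqrt{2}\pi\sqrt{w},\infty\bigr)$. The endpoint behavior is immediate: $K(\ka)\to \pi/2$ as $\ka\to 0^+$ gives $T\to\sqrt{2}\pi\sqrt{w}$, while $K(\ka)\to\infty$ as $\ka\to 1^-$ forces $T\to\infty$.

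For the monotonicity I would invoke the standard derivative formulas for $K$ and $E$ displayed just above Lemma \ref{lkdv1} to compute
\[
\f{d}{d\ka}\bigl[K(\ka)\sqrt{2-\ka^2}\bigr] \;=\; \f{g(\ka)}{\ka(1-\ka^2)\sqrt{2-\ka^2}}, \qquad g(\ka):=(2-\ka^2)E(\ka)-2(1-\ka^2)K(\ka).
\]
Since $g(0)=0$, the sign of this derivative is controlled by the sign of $g'(\ka)$. A second application of the same derivative identities, followed by the cancellations $\ka g'(\ka)=3\ka^2(K-E)$, collapses to the clean formula
\[
g'(\ka)\;=\;3\ka\,[\,K(\ka)-E(\ka)\,],
\]
which is manifestly positive on $(0,1)$ because $K>E$ there. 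Hence $g(\ka)>0$ on $(0,1)$, $\partial_\ka T(\ka,w)>0$, and the intermediate value theorem delivers a unique $\ka(w,T)\in(0,1)$ for each admissible $T$; the relations \eqref{k:10c} then recover the required constant $b$. Smoothness in $w$ follows from the implicit function theorem, exactly as in the proof of Lemma \ref{lkdv1}.

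The only nontrivial analytic point is the sign of $g$; the striking collapse of $g'$ to $3\ka(K-E)$ is essentially the whole content, and everything else — the boundary limits and the implicit-function-theorem invocation — is routine bookkeeping parallel to the $p=2$ Boussinesq case.
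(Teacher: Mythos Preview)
Your proof is correct and follows essentially the same route as the paper: both establish that $\ka\mapsto K(\ka)\sqrt{2-\ka^2}$ is strictly increasing on $(0,1)$ and then invert. The paper writes the derivative as $\dfrac{K'(\ka)+E'(\ka)}{\sqrt{2-\ka^2}}$ and simply asserts positivity, whereas you rewrite the same quantity as $\dfrac{g(\ka)}{\ka(1-\ka^2)\sqrt{2-\ka^2}}$ and actually prove $g>0$ via the clean identity $g'(\ka)=3\ka(K-E)$; in that sense your argument is slightly more complete, but the content is the same.
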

 \begin{proof}  The period $T$ is a strictly increasing function of $\kappa $:
   $$
   {\frac{d}{d\kappa}}[\sqrt{2-\kappa^2}K(\kappa)]={\frac{K'(\kappa)+E'(\kappa)}{\sqrt{2-\kappa^2}}}>0.
   $$
   From (\ref{k:10a}) and (\ref{k:10c}), we have

     $${\frac{dT}{db}}={\frac{dT}{d\kappa}}{\frac{d\kappa}{db}}={\frac{1}{2\kappa}}{\frac{dT}{d\kappa}}{\frac{d\kappa^2}{db}}$$

     $${\frac{d\kappa^2}{db}}={\frac{d\kappa^2}{d\vp_1^2}}{\frac{d\vp_1^2}{db}}={\frac{16w^2}{\vp_1^4(4w-2\vp_1^2)}}.$$
     The implicit function theorem then implies  the result.

   \end{proof}

\section{Results}
\label{sec:21}
\subsection{Setting the linear stability problem for the Boussinesq equation}
We now set up the linear stability/instability  problem for \eqref{a:1}.  Set the ansatz $u=\vp_c(x+c t)+v(t, x+ct)$ and ignore all terms $O(v^2)$.
We get   $v_{tt}+2 c v_{t x}+ M v=0$, where
$$
M v=\p_x^4 v -(1-c^2)\p_x^2 v + (f'(\vp_c) v)_{xx}
$$
Note that this operator $M$ is not self-adjoint. However, if we introduce the variable $z: z_x=v$, we get the following linearized equation in terms of $z$,
\begin{equation}
\label{zz:2}
z_{ttx}+2 c z_{t xx}+ M[z_x]=0.
\end{equation}
Note that $M[z_x]=\p_x[H[z]]$, where
\begin{equation}
\label{z:2}
H_c z=\p_x^4 z-(1-c^2)\p_x^2 z + (f'(\vp_c) z_x)_{x}
\end{equation}
Thus, the linearized equation becomes $\p_x[z_{tt}+2 c z_{tx}+ H z]=0$. In our considerations, we say that the wave $\vp_c$ is spectrally unstable,
exactly when there is an exponentially growing mode, that is a pair $\la\in \cc:\Re\la>0$ and  a  $T$ periodic function $\psi\in H_{per}^4(0,T)$,
so that $\p_x[\la^2\psi+2 c \la \psi'+ H \psi]=0$. This of course implies upon integration that for some constant $a$
$$
\la^2\psi+2 c \la \psi'+ H \psi=a.
$$
Integrating in $[0,T]$ and taking into account that both $\psi', H\psi$ are exact derivatives, impies that $a=\f{\la^2 \int_0^T \psi(x)dx}{T}$.
Thus letting $\tilde{\psi}:=\psi-\f{a}{\la^2}$ implies that $\int_0^T\tilde{\psi}(x) dx=0$ and
$$
\la^2\tilde{\psi}+2 c \la \tilde{\psi}+ H \tilde{\psi}=0
$$
These arguments motivate the following
\begin{definition}
\label{defi:1}
We say that the traveling wave $\vp_c$ is spectrally/linearly unstable, if there exists
an $T$ periodic, {\bf mean value zero} function $\psi\in D(H_c)$ and $\la:\Re\la >0$,
so that
\begin{equation}
\label{55}
\la^2\psi+2 c \la\psi'+ H_c\psi=0.
\end{equation}
\end{definition}
 The question for linear stability of equations in the form
 \begin{equation}
 \label{a:10}
 z_{tt}+2\mu z_{tx}+\ch z=0,
 \end{equation}
 or what is equivalent (at least in this case) to the solvability  of
 \begin{equation}
\label{5}
\la^2\psi+2 \mu \la\psi'+ \ch\psi=0.
\end{equation}
in $L^2_0(0,T)=\{f\in L^2_{per}(0,T): \int_0^T f(x) dx=0\}$
 has been addressed in a recent paper by the second and third author, \cite{SS1}.
 Note that  the self-adjoint  operator  $\ch$ that appears in \eqref{z:2}  is in the form
 $$
 \ch=-\p_x \cL \p_x, \ \ \cL=-\p_x^2+(1-c^2)-f'(\vp_c).
 $$
 Here $\cL$ is the ubiquitous  standard second order Schr\"odinger operator, which appears in the linearization of the generalized KdV equation around
 its traveling wave solution $\vp_c$. This observation will be crucial for the spectral properties of the operator $\ch$ as the properties of $\cL$ are
 generally well-known, at least for the cases into consideration, $p=2,3$.
 \subsection{Setting the linear stability problem for the KGZ system}
 We linearize the KGZ system as follows. We take $u(t,x)=\vp_c(x-c t)+v(t, x-c t)$ and $n(t,x)=\psi_c(x-ct)+h(t,x-ct)$ and ignore the contributions of
 all quadratic and higher order terms. We obtain the following {\it linear system} for the corrections $v,h$,
 \begin{equation}
 \label{k:15}
 \left\{\begin{array}{l}
 v_{tt}-2c v_{t x}-(1-c^2) v_{xx}+v+\psi_c v+\vp_c h=0 \\
 h_{tt}-2c h_{t x}-(1-c^2) h_{xx}-(\vp_c v)_{xx}=0.
 \end{array}
 \right.
 \end{equation}
 Further, introducing a new mean-value zero function $z$, so that $h=z_x$ and
 $w=1-c^2$. The second equation in \eqref{k:15} becomes
 $$
 \p_x[z_{tt}-2c z_{t x}-(1-c^2) z_{xx}-(\vp_c z_x)_{x}]=0,
 $$
 whence integrating in $x$ yields
 $$
 z_{tt}-2c z_{t x}-(1-c^2) z_{xx}-(\vp_c z_x)_{x}=a(t)
 $$
  for some function $a(t)$. Observe however, that in our choice of $z$, we have required that \\ $\int_0^T z(t,x) dx=0$. Thus, integrating the last
  equation in $[0,T]$ yields that all integrals on the left are zero\footnote{each term is either an exact derivative or $z_{tt}$, which is mean value zero}
   $a(t)=0$, whence $z_{tt}-2c z_{t x}-(1-c^2) z_{xx}-(\vp_c z_x)_{x}=0$. We have shown that one can rewrite the linear stability problem \eqref{k:15} as follows
 \begin{equation}
\label{k:20}
\vec{\Phi}_{tt}-2c \vec{\Phi}_{tx}+\ch \vec{\Phi}=0,
\end{equation}
where $\vec{\Phi}=\left(\begin{array}{c} v \\ z \end{array}\right)$ and
\begin{equation}
\label{k:22}
\ch=\left(\begin{array}{cc} H_1 & A \\ A^* & H_2 \end{array}\right),
\end{equation}
where
\begin{eqnarray*}
& & H_1=-(1-c^2)\p^2_x+1 +\psi_c= -w \p^2_x +1 -\f{\vp^2_c}{2 w};  \\
& & H_2=-w\p^2_x; \\
& & A z =\vp_c z_x;  \ \ A^* z=-(\vp_c z)_x.
\end{eqnarray*}
Clearly, the operator $\ch$ is self-adjoint, when considered over the domain
$$
D(\ch)=H^2[0,T]\times H^2_0[0,T] \subset L^2[0,T]\times L^2_0[0,T]
$$
so that $\ch: D(\ch)\to  L^2[0,T]\times L^2_0[0,T]$. Note that , when considering this spectral problem, our basic Hilbertian  space will be
$L^2[0,T]\times L^2_0[0,T]$, instead of the usual $L^2[0,T]\times L^2[0,T]$.

 \subsection{Precise formulation of the main results}
 Our main results are described   in the next Theorems. Note that in all of them, the issue is linear stability for the said families of spatially periodic
 solutions, {\it when the perturbation is taken to be periodic with the same period as the underlying solution}.

 The issue for stability/instability, when perturbations are taken to be with period of the form $nT$, $n$ integer is a more complicated one.
 Clearly, the instability results continue to apply in this case, but it may very well be that some, previously stable waves (in the context of the same period perturbations) become unstable, when perturbed by $nT$ periodic functions.

 \begin{theorem}
 \label{theo:1}
 Let the nonlinearity in \eqref{a:1} has the form $f(u)=u^3$.
 Then, the two parameter family of dnoidal solutions,  described in \eqref{mkdv2}, are linearly stable, \underline{if and only if}
 $$
 |c|\geq \sqrt{\f{M(\ka)}{4+M(\ka)}}, \ka\in (0,1)
 $$
 where
 $$
M(\ka):=\f{\left[4E(\ka)-\f{\pi^2}{K(\ka)}\right][(2-\ka^2) E(\ka)-2(1-\ka^2)K(\ka)]}{(2-\ka^2)(E^2(\ka)-(1-\ka^2) K(\ka))}, \ka\in (0,1),
$$
where $E(\ka), K(\ka)$ are  elliptic
 integrals of the first and second kind in a Legendre form.
 \end{theorem}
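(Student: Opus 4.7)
The plan is to apply the abstract linear stability criterion for second-order Hamiltonian evolutions of the form $z_{tt}+2\mu z_{tx}+\ch z=0$ developed in \cite{SS1} (outlined in Section \ref{sec:2}) to the specific operator $H_c=-\p_x\cL\p_x$ on $L^2_0[0,T]$, where $\cL=-\p_x^2+(1-c^2)-3\vp_c^2$ is the mKdV-type Schr\"odinger operator linearized at the dnoidal profile. The first step is the spectral analysis of $\cL$: the rescaling $y=\alpha x$ brings $\cL$ to the degree-two Lam\'e form $\alpha^2\bigl(-\p_y^2 + 6\ka^2\, sn^2(y;\ka) - (4+\ka^2)\bigr)$, whose band-edge spectrum is classical. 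On the $T$-periodic problem this yields $n_-(\cL)=1$, $\ker\cL=\mathrm{span}\{\vp_c'\}$, and the remaining spectrum strictly positive (compare \cite{An2, HaIlKi}).

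I then transfer this information to $H_c$ acting on $L^2_0[0,T]$. The quadratic-form identity $\dpr{H_c\psi}{\psi}=\dpr{\cL\psi'}{\psi'}$, together with the explicit formula $\cL^{-1}(1)=(w-\vp_c^2)/(w^2+2b)$ (verified directly from $\cL(1)=w-3\vp_c^2$ and $\cL(\vp_c^2)=-2b-3w\vp_c^2$), combined with the standard rank-one formula for Morse indices, gives $n_-(H_c|_{L^2_0})=1$ and $\ker(H_c|_{L^2_0})=\mathrm{span}\{\psi_0\}$, with $\psi_0:=\vp_c-\bar\vp_c$ and $\bar\vp_c:=T^{-1}\!\int_0^T\vp_c\,dx$. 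The main theorem of \cite{SS1} then reduces the stability question to the sign of a $1\times 1$ reduced matrix, namely
\begin{equation*}
D \;:=\; \|\psi_0\|^2 + 4c^2\,\dpr{H_c^{-1}\vp_c'}{\vp_c'};
\end{equation*}
stability is equivalent to $D \leq 0$, whereas $D>0$ causes a pair of real eigenvalues to bifurcate from the origin and yields instability.

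The bulk of the work lies in the explicit evaluation of $D$ in terms of elliptic integrals. Solving $H_c f=\vp_c'$ by Fredholm reduction on $\cL$ expresses
\begin{equation*}
\dpr{H_c^{-1}\vp_c'}{\vp_c'} \;=\; \dpr{\cL^{-1}\vp_c}{\vp_c} \;-\; \f{\dpr{\cL^{-1}\vp_c}{1}^2}{\dpr{\cL^{-1}(1)}{1}}.
\end{equation*}
Differentiating the profile equation at fixed period $T$ gives $\cL(\p_w\vp_c^T)=-\vp_c$, hence $\dpr{\cL^{-1}\vp_c}{\vp_c}=-\tfrac12\p_w\|\vp_c\|^2$ and $\dpr{\cL^{-1}\vp_c}{1}=-\p_w\!\int_0^T\vp_c\,dx$. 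The identity $\int_0^{2K(\ka)}dn(y;\ka)\,dy=\pi$ forces $\int_0^T\vp_c\,dx=\sqrt{2}\,\pi$, which is independent of $w$; consequently $\dpr{\cL^{-1}\vp_c}{1}=0$. Together with $\|\vp_c\|^2=8E(\ka)K(\ka)/T$, $w^2+2b=w^2\ka^4/(2-\ka^2)^2$, and the standard Legendre derivatives $dE/d\ka=(E-K)/\ka$, $dK/d\ka=(E-(1-\ka^2)K)/[\ka(1-\ka^2)]$, every scalar entering $D$ is reduced to an elementary combination of $E,K,\pi,\ka$.

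The principal technical obstacle is the final bookkeeping step --- reassembling the resulting combination into the stated form of $M(\ka)$. Here the identities $d(EK)/d\ka=(E^2-(1-\ka^2)K^2)/(\ka(1-\ka^2))$ and, from $w=4K^2(2-\ka^2)/T^2$, $dw/d\ka=8K[(2-\ka^2)E-2(1-\ka^2)K]/(T^2\ka(1-\ka^2))$, play a central role; one factor of the numerator of $M(\ka)$ arises from $\|\psi_0\|^2=2(4EK-\pi^2)/T$, the second from $dw/d\ka$, and the denominator from $\p_w\|\vp_c\|^2$. After the algebra, the critical equation $D=0$ collapses to $4c^2=M(\ka)\,w=M(\ka)(1-c^2)$, equivalent to $|c|^2=M(\ka)/(4+M(\ka))$, and the stability regime $D\leq 0$ becomes precisely $|c|\geq\sqrt{M(\ka)/(4+M(\ka))}$, as claimed.
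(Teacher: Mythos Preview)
Your proof follows essentially the same route as the paper: both verify the hypotheses of Theorem~\ref{theo:5} via the Lam\'e analysis of $\cL$, then compute $\dpr{H_c^{-1}\psi_0'}{\psi_0'}$ by solving $-\cL f'=\vp_c+b$, using $\cL^{-1}\vp_c=-\p_w\vp_c$ (equivalently the paper's $\cL^{-1}\vp_c=\tfrac{1}{2c}\p_c\vp_c$) together with the key observation $\int_0^T\vp_c\,dx=\sqrt{2}\,\pi$ to kill the cross term, and finish with the same algebra. Two minor variations worth noting: your closed form $\cL^{-1}(1)=(w-\vp_c^2)/(w^2+2b)$ is more direct than the paper's eigenfunction expansion in Lemma~\ref{l2}, and your rank-one index argument for $n_-(H_c|_{L^2_0})=1$ (which, to be complete, requires checking $\dpr{\cL^{-1}1}{1}>0$, i.e.\ $(2-\ka^2)K(\ka)>2E(\ka)$) replaces the paper's citation to \cite{HSS}.
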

   \begin{figure}[h1]
\centering
\includegraphics[width=8cm,height=6cm]{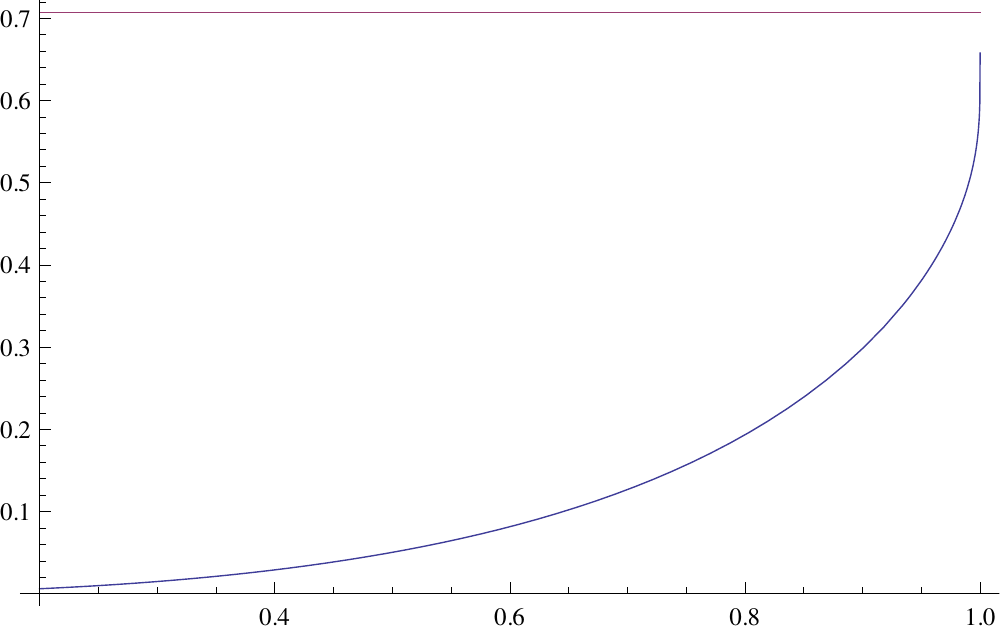}
\caption{Graph of the positive function $\sqrt{\f{M(\ka)}{4+M(\ka)}}$, together with its terminal value  $\f{\sqrt{2}}{2}$ as a reference}
\label{fig1}
\end{figure}
We now give a different formulation of the main result. Let $T>\sqrt{2} \pi$. Then, the waves described in \eqref{mkdv2} are one parameter family of waves, having a fundamental period $T$, which can be
parametrized by $c: |c|<\sqrt{1-\f{2\pi^2}{T^2}}$,   (note that $c, \ka$ are in one-to-one relation given by \eqref{z:200}).   Now, Theorem \ref{theo:1} asserts that the stable waves in this family are exactly those with $|c|\geq c_T$, where
$c_T\in (0,\sqrt{1-\f{2\pi^2}{T^2}})$ is determined as follows. Let $\ka_T$ be the unique solution of
$$
K(\ka) \sqrt{2-\ka^2} \sqrt{4+M(\ka)}=T
$$
Then, $c_T=\sqrt{\f{M(\ka_T)}{4+M(\ka_T)}}$.
\begin{figure}[h2]
\centering
\includegraphics[width=8cm,height=6cm]{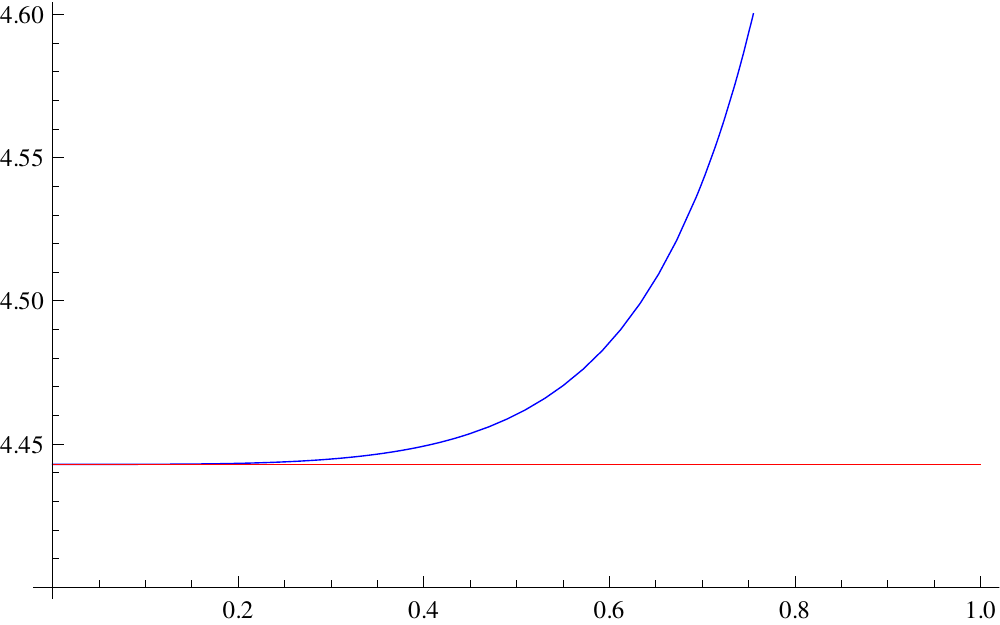}
\caption{The blue line is the graph of the increasing function $ K(\ka) \sqrt{2-\ka^2} \sqrt{4+M(\ka)}:(0,1)\to \rone$. The red line is $\sqrt{2}\pi$. Note that the range of this function is $(\sqrt{2}\pi, \infty)$.}
\label{fig2}
\end{figure}

 Our next theorem concerns the quadratic case
 \begin{theorem}
\label{theo:2}
 Let the nonlinearity in \eqref{a:1} has the
 form $f(u)=\frac{u^2}{2}$. Then the periodic solutions \eqref{kdv2}, are linearly stable, \underline{if and only if}
 $$
 |c|\geq \sqrt{\f{\widetilde{F}(\ka)}{4+\widetilde{F}(\ka)}}, \ka\in (0,1)
 $$
 where
 \begin{eqnarray*}
 & & \tilde{F}(\ka)=\f{\left[
       2F(\kappa)-{\frac{F^2(\kappa)}{16\sqrt{1-\kappa^2+\kappa^4}K^2(\kappa)}}\right] }{ F(\kappa)+256K^4(\kappa)F'(\kappa)G(\kappa)(1-\kappa^2+\kappa^4)+\f{4096 K^6(\ka)(1-\ka^2+\ka^4)^{3/2}(F'(\ka) G(\ka))^2}{1-16\sqrt{1-\kappa^2+\kappa^4}K^2(\kappa)F'(\kappa)G(\kappa)}}
 \end{eqnarray*}
 and
 \begin{eqnarray*}
 & & F(\kappa)=16K(\kappa)[3E(\kappa)+(\kappa^2-2+\sqrt{1-\kappa^2+\kappa^4})K(\kappa)], \\
 & & G(\ka)=\frac{1}{128{\frac{d[K^4(\kappa)(1-\kappa^2+\kappa^4)]}{d\kappa}}}.
 \end{eqnarray*}
\end{theorem}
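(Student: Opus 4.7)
The plan is to apply the general spectral/linear-stability framework for second-order Hamiltonian systems $z_{tt}+2cz_{tx}+\ch z=0$ developed in \cite{SS1} to the Boussinesq linearization \eqref{55}, with
\[
H_c = -\p_x\cL\p_x, \qquad \cL = -\p_x^2+w-\vp_c, \qquad w=1-c^2,
\]
acting on the mean-zero Hilbert space $L^2_0(0,T)$. The criterion from \cite{SS1} (to be recalled in Section \ref{sec:2}) reduces linear stability to an instability-index identity relating the Morse index of $H_c$ on $L^2_0$ to the signature of a small matrix built from constrained inverses of $H_c$ against the kernel direction and the directions generated by parameter variation.

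I would begin by pinning down the spectral picture of the Lam\'e-type Schr\"odinger operator $\cL$ on $L^2_{per}(0,T)$. Using the explicit cnoidal profile \eqref{kdv2}, standard Jacobi-function computations (analogous to those in \cite{An2,HIK}) show that $\cL$ has a simple negative eigenvalue, a simple zero eigenvalue spanned by $\vp_c'$, and strictly positive spectrum above. Differentiating \eqref{kdv1} in $x$ gives $\cL\vp_c'=0$, and a one-line algebraic manipulation of \eqref{kdv1} itself yields the key identity $\cL\vp_c=-\vp_c^2/2$; together these are precisely what permit the explicit inversion of $H_c$ on the relevant finite-codimension subspace of $L^2_0$.

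Next, I would transfer these facts to $H_c$. From the polarization $\langle H_c\psi,\psi\rangle=\langle \cL\psi',\psi'\rangle$, a minimax argument combined with the description of $\ker\cL$ shows that $H_c|_{L^2_0}$ has exactly one simple negative eigenvalue and a one-dimensional kernel spanned by $\vp_c-\bar\vp_c$, where $\bar\vp_c=T^{-1}\int_0^T\vp_c\,dx$. Lemma \ref{lkdv1} supplies a smooth one-parameter family $c\mapsto\vp_c$ of fixed period $T$, and the $c$-derivative $\partial_c\vp_c$ furnishes the generalized-kernel direction required to invoke the index formula of \cite{SS1}. The resulting stability condition takes the shape of nonnegativity of a Schur complement assembled from $\langle H_c^{-1}\vp_c,\vp_c\rangle$ together with the inner products against the constraint direction $\mathbf 1$ (needed to enforce mean zero) and coupled through the skew term $2c\p_x$.

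Finally, I would evaluate each of the required inner products explicitly via the cnoidal formulas \eqref{kdv2}--\eqref{kdv2b}. Using $\cL\vp_c=-\vp_c^2/2$ and $\cL\vp_c'=0$, each inversion of $\cL$ is reduced to the action on polynomials in $\vp_c$ and $\vp_c'$, and the resulting integrals $\int_0^T\vp_c^n\,dx$ collapse to rational expressions in $K(\ka)$, $E(\ka)$, and $\ka$ via standard Jacobi-integral identities; parameter derivatives in $w$ and $b$ are then read off from the relations \eqref{kdv2a}. Assembling everything and rearranging the criterion produces the threshold inequality $c^2(4+\tilde F(\ka))\ge \tilde F(\ka)$, which is precisely $|c|\ge\sqrt{\tilde F(\ka)/(4+\tilde F(\ka))}$, and simultaneously identifies the auxiliary functions $F(\ka)$ and $G(\ka)$ as stated. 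The main obstacle will be organizing the Schur-complement calculation so that the nested fraction appearing in the denominator of $\tilde F$ emerges correctly: the kernel direction $\vp_c-\bar\vp_c$ is \emph{not} orthogonal to $\mathbf 1$, so the mean-zero constraint and the one-dimensional kernel of $H_c$ must be inverted against simultaneously rather than in two independent steps, and this is exactly what generates the self-referential $1-16\sqrt{1-\ka^2+\ka^4}K^2(\ka)F'(\ka)G(\ka)$ factor in the denominator of $\tilde F$.
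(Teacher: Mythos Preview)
Your plan is essentially the paper's own: verify the structural hypotheses on $\ch_c=-\p_x\cL\p_x$ (one simple negative eigenvalue, simple kernel spanned by $\vp_c-\bar\vp_c$), then compute the single scalar $\dpr{\ch_c^{-1}\psi_0'}{\psi_0'}$ and feed it into the $|c|\ge\mu^*(\ch_c)$ criterion of \cite{SS1}; the elliptic-integral bookkeeping you outline is exactly what the paper does.

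Two points of correction. First, your ``key identity'' $\cL\vp_c=-\vp_c^2/2$ is true but is not what drives the computation of $\cL^{-1}\vp_c$; the identity actually used is $\cL[\p_c\vp_c]=2c\,\vp_c$, obtained by differentiating \eqref{kdv1} in $c$, so that $\cL^{-1}\vp_c=\tfrac{1}{2c}\p_c\vp_c$ directly. Second, your diagnosis of ``the main obstacle'' is off: $\vp_c-\bar\vp_c$ \emph{is} orthogonal to $\mathbf 1$ by construction, so there is no simultaneous inversion of kernel and mean-zero constraints. The nested fraction in $\tilde F$ arises for a different reason: to compute $\ch_c^{-1}\vp_c'$ you solve $-\p_x\cL\p_x f=\vp_c'$, and one integration produces $-\cL f'=\vp_c+b$ with an undetermined constant $b$ fixed by periodicity of $f$, namely $b=-\dpr{\cL^{-1}\vp_c}{1}/\dpr{\cL^{-1}1}{1}$. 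This yields the Schur-complement expression
\[
\dpr{\ch_c^{-1}\psi_0'}{\psi_0'}=\f{1}{\|\vp_c-\bar\vp_c\|^2}\left(\dpr{\cL^{-1}\vp_c}{\vp_c}-\f{\dpr{\cL^{-1}\vp_c}{1}^2}{\dpr{\cL^{-1}1}{1}}\right),
\]
and it is the quantity $\dpr{\cL^{-1}1}{1}=\tfrac{T}{w}\bigl[1-16\sqrt{1-\ka^2+\ka^4}K^2(\ka)F'(\ka)G(\ka)\bigr]$ in the denominator (computed via $\cL(1)=w-\vp_c$ and $\cL^{-1}\vp_c=\tfrac{1}{2c}\p_c\vp_c$) that is responsible for the self-referential factor you noticed. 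Once you reorient around that integration constant, the rest of your outline goes through verbatim.
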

\begin{figure}[h3]
\centering
\includegraphics[width=8cm,height=6cm]{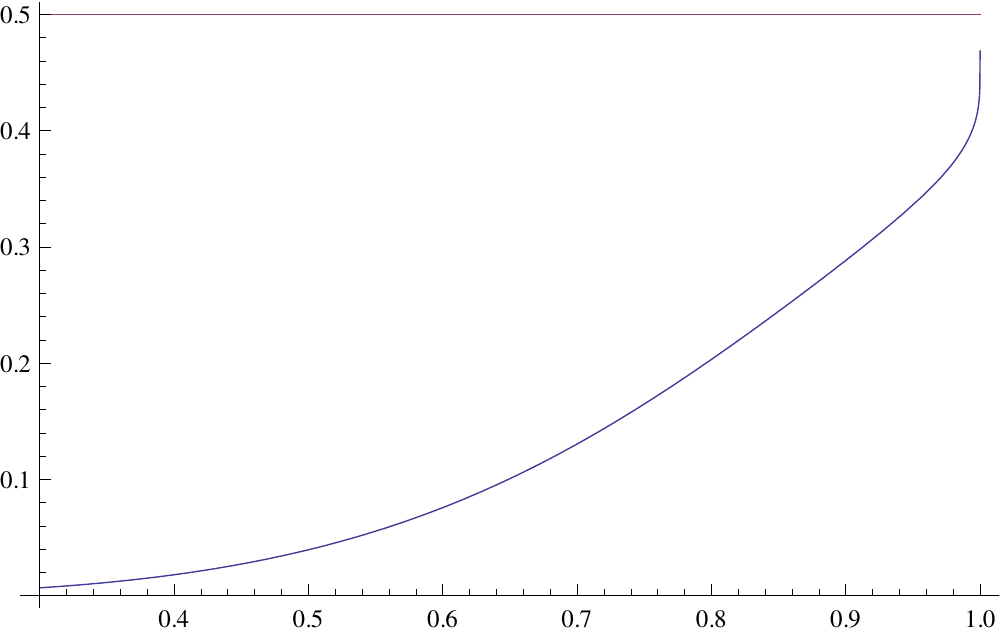}
\caption{Graph of the positive function $\sqrt{\f{\tilde{F}(\ka)}{4+\tilde{F}(\ka)}}$, together with its terminal value $\f{1}{2}$}
\label{fig3}
\end{figure}
An alternative formulation is the following. Fix  $T>2\pi$ and consider the   family of cnoidal solutions, described in  \eqref{kdv2}. This is a one-parameter family of solutions\footnote{where $\ka$ and $c$ are related by \eqref{kdv2b}}, having fundamental period $T$ and indexed by say $c$, where $c: c<\sqrt{1-\f{4\pi^2}{T^2}}$. Then, the linearly stable waves with period $T$ are exactly  those, for which
 $$
 \sqrt{1-\f{4\pi^2}{T^2}}>|c|\geq c_T,
 $$
 where $c_T\in (0, \sqrt{1-\f{4\pi^2}{T^2}})$ is determined as follows. Take $\ka_T$ to  be the unique solution of the algebraic equation
 $$
 2K(\ka)\sqrt[4]{1-\ka^2+\ka^4}\sqrt{4+\tilde{F}(\ka)}=T.
 $$
 Then $c_T= \sqrt{\f{\widetilde{F}(\ka_T)}{4+\widetilde{F}(\ka_T)}}$.
 \begin{figure}[h4]
\centering
\includegraphics[width=8cm,height=6cm]{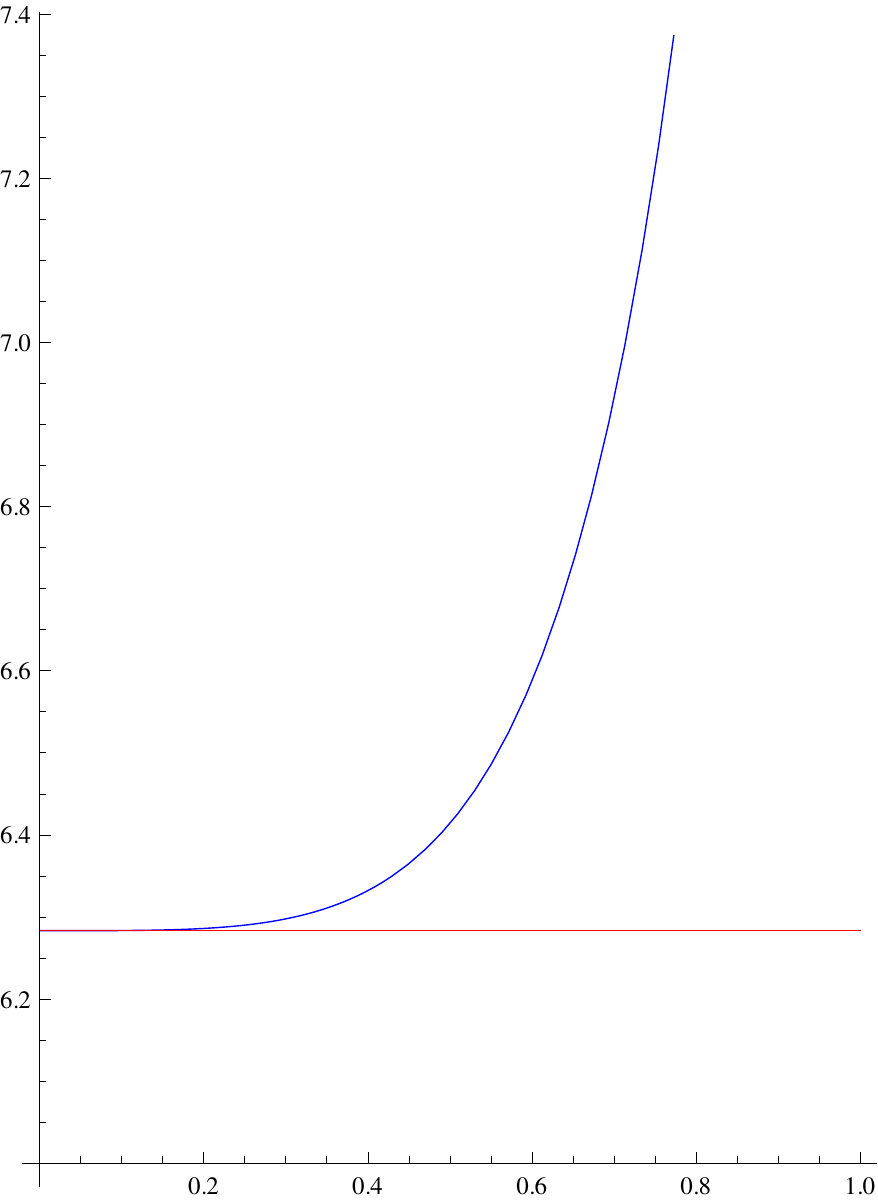}
\caption{The blue line is the graph of the function $2K(\ka)\sqrt[4]{1-\ka^2+\ka^4}\sqrt{4+\tilde{F}(\ka)}:(0,1)\to \rone$. The red line is $2\pi$. Note that the range of this function is $(2\pi, \infty)$.}
\label{fig4}
\end{figure}

 {\bf Remark:} Using the results of Theorem \ref{theo:1} and Theorem \ref{theo:2},
 one can reconstruct the results on linear stability of the whole line
 waves, \cite{SS1}
 \begin{equation}
 \label{k:55}
\vp_c(\xi)=\left[\left(\f{p+1}{2}\right) (1-c^2)  \right]^{\f{1}{p-1}}
 sech^{\f{2}{p-1}}\left(\f{\sqrt{1-c^2} (p-1)}{2} \xi\right).
\end{equation}
Recall that the results of \cite{SS1} predict linear stability if and only if $|c|\geq \f{\sqrt{p-1}}{{2}}$.

Take $p=3$. Then, the periodic waves described in
\eqref{mkdv2} in the limit $\ka\to 1-$ correspond to the whole line waves described in  \eqref{k:55}. Observe that since $\lim_{\ka\to 1-} E(\ka)=1,
\lim_{\ka\to 1-} K(\ka)=\infty$ and $\lim_{\ka\to 1-} (1-k^2)K(\ka)=0$, we conclude easily $\lim_{\ka\to 1-} M(\ka)=4$, whence
$$
\lim_{\ka\to 1-} \sqrt{\f{M(\ka)}{4+M(\ka)}}=\f{\sqrt{2}}{2}.
$$
Similarly, one can check that for $p=2$ (more precisely if $f(u)=\f{u^2}{2}$)),
 we have
 $$
\lim_{\ka\to 1-} \sqrt{\f{\tilde{F}(\ka)}{4+\tilde{F}(\ka)}}=\f{1}{2}.
$$
Thus, we obtain the results in \cite{SS1} for $p=2,3$ as a corollary.

Our next result concerns the KGZ system \eqref{KGZ}.
\begin{theorem}
\label{theo:10}
The KGZ system \eqref{KGZ} has a two-parameter family of traveling wave solutions \\  $(\vp_c, \psi_c)$, described in \eqref{k:1} and \eqref{k:10}.
  These waves are stable, \underline{if and only if} $\ka\in (\ka_0,1)$ and 
$$
1>|c|\geq \f{1}{\sqrt{1+4 N(\ka)}},  
$$
where $\ka_0=0.937095...$ and the function $N$ is defined in \eqref{z001}.
\begin{figure}[h5]
\centering
\includegraphics[width=8cm,height=6cm]{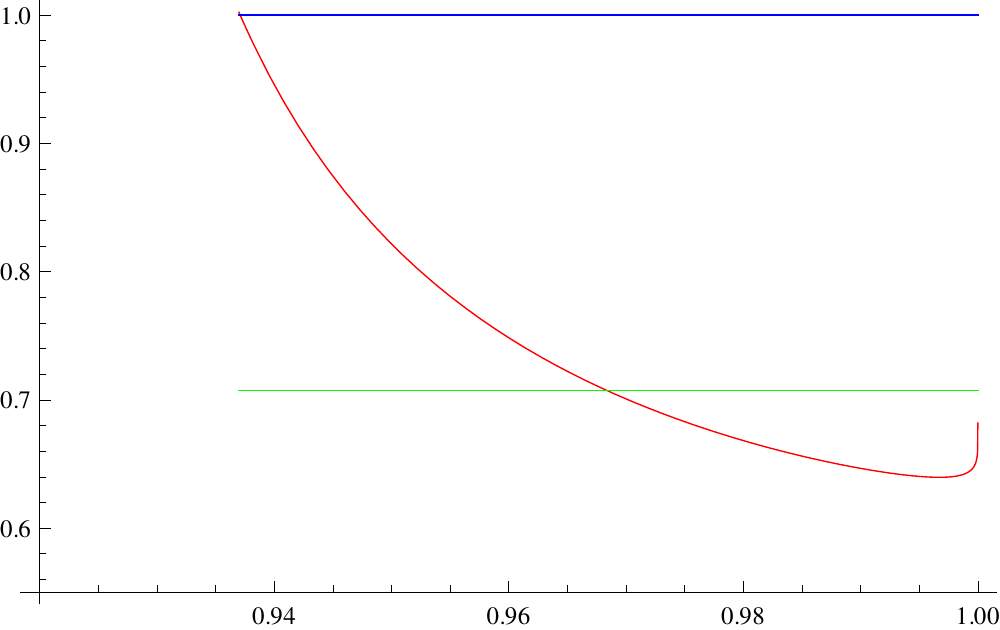}
\caption{The red line is the graph of the function $\f{1}{\sqrt{1+4 N(\ka)}}$ in $(\ka_0,1)$. The green line is the terminal value of $\f{\sqrt{2}}{2}$.}
\label{fig5}
\end{figure}
\end{theorem}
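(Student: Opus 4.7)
The plan is to apply the abstract linear stability theory for second-order-in-time Hamiltonian equations $\vec{\Phi}_{tt}-2\mu\vec{\Phi}_{tx}+\ch\vec{\Phi}=0$ developed in \cite{SS1} to the linearized KGZ system \eqref{k:20}--\eqref{k:22}, with $\mu=-c$. That theory expresses linear stability through (i) the negative Morse index $n(\ch)$ of the self-adjoint operator $\ch$ on $L^2[0,T]\times L^2_0[0,T]$, (ii) the dimensions of $\ker(\ch)$ and the generalized kernel, and (iii) the sign of a scalar quantity built from the derivatives of the wave profile with respect to the wave parameters. Translation invariance already yields $(\vp_c',z_c)\in\ker(\ch)$, where $z_c\in H^2_0[0,T]$ is the mean-value-zero antiderivative of $\psi_c'$. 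Since $H_2=-w\p_x^2$ is strictly positive on $L^2_0[0,T]$, a Schur-complement reduction gives
$$n(\ch)=n\bigl(H_1-A H_2^{-1}A^*\bigr),$$
and the reduced scalar operator is a nonlocal, zero-order perturbation of the Lam\'e-type Schr\"odinger operator $-w\p_x^2+1-\vp_c^2/(2w)$, whose low-lying spectrum under the $dn$-ansatz \eqref{k:10} is explicit. Standard Floquet/Sturm oscillation arguments then pin down $n(\ch)$ and verify that $\dim\ker(\ch)=1$ for the parameter values of interest.

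With the spectral count in hand, the \cite{SS1} criterion reduces the stability question to the sign of a finite-dimensional determinant $D(c,\ka)$ of the form $\dpr{\ch^{-1}F}{F}$, where $F$ is built from $2c\p_x\p_c\vec{\Phi}_c$ and $\p_c\vec{\Phi}_c$. Using the explicit parametrization \eqref{k:10a}--\eqref{k:10c} to differentiate $\vp_c,\psi_c$ with respect to $c$ (noting that $\ka$ depends on $c$ through $b=b(w)$ via Lemma \ref{lk}), inverting $\ch$ against the orthogonality condition coming from $\ker(\ch)$, and reducing the resulting quadrature integrals with the identities $K'(\ka)=(E(\ka)-(1-\ka^2)K(\ka))/(\ka(1-\ka^2))$ and $E'(\ka)=(E(\ka)-K(\ka))/\ka$ produces the function $N(\ka)$. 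The threshold $|c|=1/\sqrt{1+4N(\ka)}$ is precisely where $D(c,\ka)$ vanishes, i.e.\ the equation $4c^2 N(\ka)=1-c^2=w$; above this threshold the index equality of \cite{SS1} yields stability, and below it one produces an exponentially growing mode from the sign mismatch.

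Two obstacles need to be addressed. On the spectral side, the nonlocal term $AH_2^{-1}A^*$ in the Schur reduction complicates the oscillation count: controlling it requires writing it as a rank-one-type correction governed by $\vp_c$ and comparing to the unperturbed Lam\'e spectrum, since hidden kernel elements would spoil the application of \cite{SS1}. On the computational side, reducing $D(c,\ka)$ to the closed-form expression $N(\ka)$ requires repeated use of elliptic-integral identities to eliminate $\p_c\ka$, and the bookkeeping of the derivatives $\p_c\vp_c$, $\p_c\psi_c$, each containing both an explicit and an implicit contribution, is the heaviest piece of work. The restriction $\ka\in(\ka_0,1)$ is expected to emerge as the range on which $N(\ka)>0$; outside it the scalar criterion forces instability for every admissible $c$, so locating $\ka_0\approx 0.937$ amounts to isolating the unique root in $(0,1)$ of the explicit transcendental equation $N(\ka)=0$, which is the final step of the argument.
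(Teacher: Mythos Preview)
Your overall plan---apply Theorem~\ref{theo:5} from \cite{SS1}---matches the paper, but two of your ingredients diverge.

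\textbf{Negative index.} The paper does \emph{not} use a Schur complement. In Proposition~\ref{prop:k2} it writes the eigenvalue problem $\ch\binom{f}{g}=-a^2\binom{f}{g}$, solves the second row for $g=(a^2-w\p_x^2)^{-1}\p_x[\vp_c f]$, and substitutes back to obtain a scalar family $M_a=\cL+a^2+\f{a^2}{w}\vp_c(a^2-w\p_x^2)^{-1}(\vp_c\cdot)$; a Fourier computation gives $M_a\geq M_b+(a^2-b^2)$ for $a\geq b$, so the bottom eigenvalue rises monotonically from that of $M_0=\cL$ through zero at a unique $a_0$, and Courant's principle yields simplicity. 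Your Schur route gives $S=H_1-AH_2^{-1}A^*=\cL+\f{1}{wT}\vp_c\dpr{\vp_c}{\cdot}$, a rank-one nonnegative perturbation of $\cL$. Interlacing gives $n(S)\leq 1$, and the kernel computation (which needs the same fact $\dpr{\cL^{-1}\vp_c}{\vp_c}\neq -wT$ proved in Proposition~\ref{prop:k1}) gives $\dim\ker(S)=1$; but interlacing alone does not rule out $n(S)=0$, so you would still owe a separate argument (e.g.\ that the sign-changing $\vp_c'$ cannot be a ground state of $S$) to conclude $n(S)=1$. The paper's monotone-family argument sidesteps this.

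\textbf{The scalar quantity.} Here there is a genuine gap. Theorem~\ref{theo:5} decides stability by the sign of $\dpr{\ch^{-1}\psi_0'}{\psi_0'}$, where $\psi_0'$ is the \emph{$x$-derivative of the normalized kernel element}, namely $\psi_0'=m\bigl(\vp_c'',\,-(\vp_c^2/2w)'\bigr)$. This vector is not built from $\p_c\vec{\Phi}_c$ or $2c\p_x\p_c\vec{\Phi}_c$; no speed-derivative enters the input at all. The parameter derivative appears only as a device for \emph{solving} $\ch\binom{f}{g}=\psi_0'$: differentiating \eqref{k:5} in $w$ gives $\cL[\p_w\vp_c]=\vp_c''-\vp_c^3/(2w^2)$, from which one reads off $f=(1-c_1)\p_w\vp_c+\f{c_1}{w}\vp_c$ with $c_1$ fixed by the mean-zero constraint on $g$, and then $\dpr{\ch^{-1}\psi_0'}{\psi_0'}=-N(\ka)/w$ with $N$ as in \eqref{z001}. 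If you instead feed $\p_c\vec{\Phi}_c$ into $\ch^{-1}$ you are computing a different (GSS-type) quantity and will not land on the threshold $|c|=1/\sqrt{1+4N(\ka)}$.
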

If we take a limit as $\ka\to 1$, we have
$$
\lim_{\ka\to 1} \f{1}{\sqrt{1+4 N(\ka)}}=\f{\sqrt{2}}{2},
$$
Since $\ka\to 1$ corresponds to the case $T=\infty$ or the case of the whole line, this allows us to conclude that the corresponding whole line solitons are stable, provided $|c|\geq \f{\sqrt{2}}{2}$. This was indeed the conclusion in \cite{SS1}, so we are able to deduce this result, as a consequence of Theorem \ref{theo:10}.

Once again, we provide an alternative interpretation of Theorem \ref{theo:10}. Let $T>0$ be a fixed period. Then, there exists a one parameter family of periodic waves with period $T$, described
 in \eqref{k:1}, \eqref{k:10}.  More precisely, this family may be  parameterized\footnote{The parameters $\ka$ and $c$ are related by \eqref{k:10b}}  by $c$ with the following restrictions on $c$: if $T<\sqrt{2} \pi$, then
 $1>|c|>\sqrt{1-\f{T^2}{2\pi^2}}$, otherwise if $T\geq \sqrt{2} \pi$, $c\in (-1,1)$. Then, the stable waves in this family are given by
 $$
 |c|\geq c_T= \f{1}{\sqrt{1+4 N(\ka_T)}},
 $$
 where $\ka_T\in (\ka_0,1)$ is found as the unique solution (see graph below) of the algebraic equation
 $$
 \f{4 K(\ka) \sqrt{2-\ka^2} \sqrt{N(\ka)}}{\sqrt{1+4 N(\ka)}}=T.
 $$
 \begin{figure}[h6]
\centering
\includegraphics[width=8cm,height=6cm]{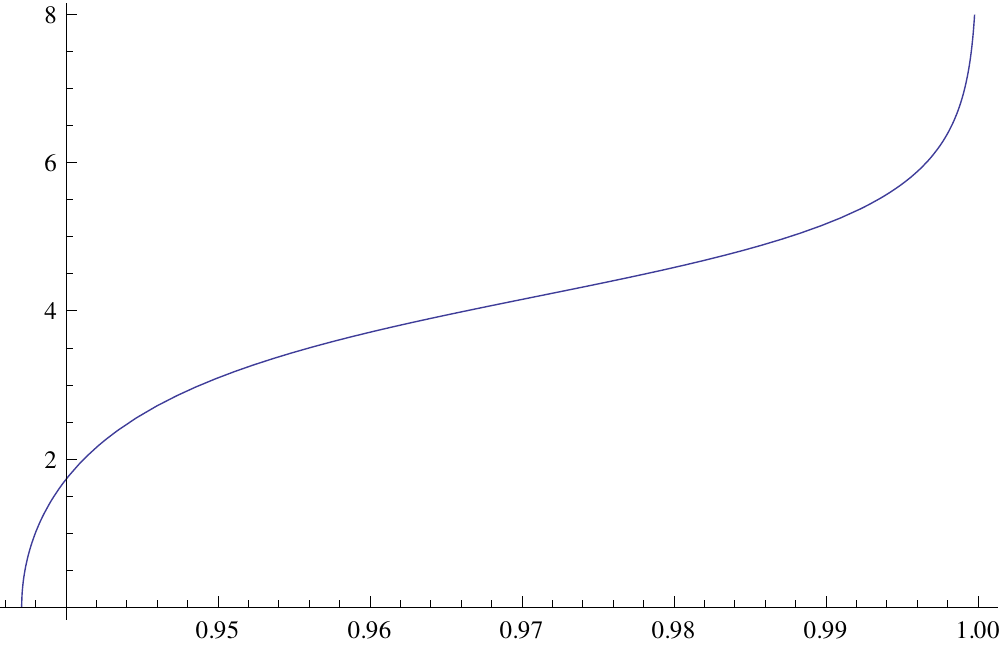}
\caption{Graph of the increasing function
$\f{4 K(\ka) \sqrt{2-\ka^2} \sqrt{N(\ka)}}{\sqrt{1+4 N(\ka)}}:(\ka_0, 1)\to \rone_+$. Note that its range is $(0,\infty)$.}
\label{fig6}
\end{figure}

\section{Preliminaries}
\label{sec:2}

\subsection{Linear stability theory   for second order equations} In this section, we give a precise statements of the results of \cite{SS1},
concerning the linear stability of \eqref{a:10} or what is equivalent the solvability of \eqref{5}.
 We assume the following about the self-adjoint operator $\ch$:
 \begin{equation}
\label{A}
\left\{ \begin{array}{l}
\si(\ch)=\{-\de^2\} \cup \{0\}\cup \si_+(\ch), \si_+(\ch)\subset (\si^2, \infty), \si>0\\
 \ch \phi=-\de^2 \phi,  \ \dim[Ker(\ch+\de^2)]=1 \\
  \ch \psi_0=0, \ \  \dim[Ker(\ch)]=1\\
   \|\psi_0\|=1
\end{array}
\right.
\end{equation}
 Next, we require that for all $\tau>>1$ (note $H+\tau>0$ will be invertible)
\begin{equation}
 \label{E}
   (\ch+\tau)^{-1/2} \p_x (\ch+\tau)^{-1/2}, (\ch+\tau)^{-1} \p_x \in \cb(L^2),
 \end{equation}
Finally, we require
\begin{equation}
\label{B}
  \overline{\ch h}=\ch \bar{h}.
\end{equation}
Note that the last identity ensures that $H$ maps real-valued functions into real-valued functions.
The following theorem, in a more general form,  appears as Theorem 1 in \cite{SS1}.
\begin{theorem}
\label{theo:5}
 Let $\ch$  be a self-adjoint operator on
 a Hilber space $H$. Assume that  it satisfies the structural assumptions \eqref{A}, \eqref{E} as well as the
  the reality assumption \eqref{B}.

Then, if $\dpr{\ch^{-1}[\psi_0']}{\psi_0'}\geq  0$, one has a solution to \eqref{5} for all values  of $\mu\in \rone$, that is instability
in the sense of Definition \ref{defi:1}.
 Otherwise, supposing $\dpr{\ch^{-1}[\psi_0']}{\psi_0'}<0$,
 \begin{itemize}
 \item
  the problem \eqref{5} has solutions, if
   $\mu$ satisfies the inequality
 \begin{equation}
 \label{m:2}
 0\leq |\mu|<
 \frac{1}{2\sqrt{\dpr{-\ch^{-1}[\psi_0']}{\psi_0'}}}=:\mu^*(\ch)
 \end{equation}
 \item  the problem \eqref{5} does not have solutions (i.e. stability), if
   $\om$ satisfies the reverse inequality
   \begin{equation}
 \label{m:3}
  |\mu|\geq  \mu^*(\ch)
 \end{equation}
 \end{itemize}
 \end{theorem}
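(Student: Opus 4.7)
\medskip

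\noindent\textbf{Proof plan.} The plan is to perform a Lyapunov--Schmidt reduction with respect to $\ker(\ch) = \mathrm{span}(\psi_0)$, derive a scalar characteristic equation whose real positive roots characterize the unstable modes, and then analyze it as $\la$ and $\mu$ vary.

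First, I would write any candidate eigenfunction as $\psi = \alpha\psi_0 + \eta$ with $\eta \perp \psi_0$, and let $P_\perp$ denote the orthogonal projection onto $\psi_0^\perp$. Projecting $\la^2\psi + 2\mu\la\psi' + \ch\psi = 0$ onto $\psi_0$ (using $\ch\psi_0 = 0$, self-adjointness of $\ch$, and $\dpr{\psi_0'}{\psi_0} = 0$ by periodicity) and onto $\psi_0^\perp$ produces the coupled system
\[
\la^2\alpha = 2\mu\la\dpr{\eta}{\psi_0'}, \qquad M(\la,\mu)\,\eta = -2\mu\la\alpha\,\psi_0',
\]
where $M(\la,\mu) := \ch|_{\psi_0^\perp} + \la^2 + 2\mu\la\,P_\perp\p_x$. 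Hypothesis \eqref{E}, together with the invertibility of $\ch|_{\psi_0^\perp}$, makes $M^{-1}$ meaningful; eliminating $\eta$ in the non-trivial case $\alpha\ne 0,\ \la\ne 0$ produces the scalar characteristic equation
\[
D(\la,\mu) := 1 + 4\mu^2\,\dpr{M(\la,\mu)^{-1}\psi_0'}{\psi_0'} = 0,
\]
and \eqref{B} ensures that $g(\la,\mu) := \dpr{M(\la,\mu)^{-1}\psi_0'}{\psi_0'}$ is real-valued for real $\la,\mu$.

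Next, I would study $g(\cdot,\mu)$ on the interval $(0,\la_1(\mu))$, where $\la_1(\mu)$ is the first positive real pole of $M^{-1}$ (a perturbation of $\la=\de$ coming from $\ch\phi = -\de^2\phi$). The endpoint values are $g(0,\mu) = \dpr{\ch^{-1}\psi_0'}{\psi_0'}$ and $\lim_{\la\uparrow\la_1(\mu)} g(\la,\mu) = -\infty$. A deformation argument in $\mu$ starting from $\mu=0$---where the spectral decomposition $g(\la,0) = \sum_i |\dpr{\psi_0'}{\phi_i}|^2 / (\nu_i + \la^2)$ is manifestly strictly decreasing on $(0,\de)$---extends monotonicity of $g(\cdot,\mu)$ to all admissible $\mu$. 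The intermediate value theorem then shows $D(\cdot,\mu) = 0$ admits a root $\la > 0$ iff $-\tfrac{1}{4\mu^2} < \dpr{\ch^{-1}\psi_0'}{\psi_0'}$. When $\dpr{\ch^{-1}\psi_0'}{\psi_0'} \ge 0$ this inequality holds for every $\mu$, giving instability universally; when $\dpr{\ch^{-1}\psi_0'}{\psi_0'} < 0$ it rearranges to $|\mu| < \mu^*(\ch)$, reproducing the sharp dichotomy.

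The hard part will be ruling out genuinely complex eigenvalues with $\mathrm{Re}\,\la > 0$ in the stable regime $|\mu| \ge \mu^*$. By \eqref{B} the spectrum of the pencil is symmetric under complex conjugation, and since $\ch$ has exactly one negative direction, a Krein-signature count shows that an eigenvalue can leave the open right half-plane only through $\la = 0$ or by colliding with its conjugate on the imaginary axis at a mode of opposite Krein sign. Starting from the explicit picture at $\mu = 0$---where the only unstable eigenvalue is the simple real $\la = \de$ coming from $\ch\phi = -\de^2\phi$---continuity in $\mu$ allows tracking of the single bifurcating branch, which by the previous paragraph hits the origin precisely at $|\mu| = \mu^*$ and does not reappear for larger $|\mu|$. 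Making this continuation argument rigorous is the technical core carried out in \cite{SS1}.
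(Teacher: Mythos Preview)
The paper does not contain its own proof of this theorem: Theorem~\ref{theo:5} is quoted verbatim from \cite{SS1} (see the sentence immediately preceding the statement) and used as a black box throughout. So there is no in-paper argument to compare your plan against; your last sentence already acknowledges that the technical core is in \cite{SS1}.

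As a sketch of how \cite{SS1} proceeds, your Lyapunov--Schmidt reduction is set up correctly: the two projections are right, the scalar characteristic equation $1+4\mu^{2}\dpr{M(\la,\mu)^{-1}\psi_0'}{\psi_0'}=0$ is the object analyzed in \cite{SS1}, and your identification $g(0,\mu)=\dpr{\ch^{-1}\psi_0'}{\psi_0'}$ together with the blow-down to $-\infty$ at the first pole gives exactly the threshold $|\mu|=\mu^*(\ch)$ via the intermediate value theorem. That part is solid.

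The genuine gap in your outline is the monotonicity of $g(\cdot,\mu)$ for $\mu\neq 0$. At $\mu=0$ the spectral expansion makes it obvious, but the perturbing term $2\mu\la\,P_\perp\p_x$ is skew-adjoint and carries no sign, so a ``deformation from $\mu=0$'' is not in itself an argument: you would need to show that no critical point of $g(\cdot,\mu)$ can be born as $\mu$ increases, and continuity of eigenvalues alone does not give that. Without monotonicity, the IVT produces at least one root in the unstable regime but does not preclude roots in the stable regime $|\mu|\ge\mu^*$. Similarly, in the last paragraph your Krein-count heuristic is the right idea, but the statement that eigenvalues ``can leave the right half-plane only through $\la=0$'' needs the one-negative-direction hypothesis in an essential way and some care with the quadratic pencil structure; simply invoking symmetry under conjugation is not enough. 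Both of these are precisely the points where \cite{SS1} does real work, so your plan is an accurate road map but not yet a proof.
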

 {\bf Remarks:}
 \begin{enumerate}
 \item Theorem \ref{theo:5} appears in \cite{SS1} as a result about the stability of
 \eqref{a:10}, but we state it in its equivalent form for solvability of
 \eqref{5}.
 \item In the applications that we consider, we restrict our attention to the Hilbert space $H=L^2_0[0,T]$ or $H=L^2[0,T]\times L^2_0[0,T]$,
 depending on the situation that we are in.
 \end{enumerate}

\subsection{Spectral theory for the Schr\"odinger operators   of   Boussinesq waves}
We   review and state the main results regarding the spectral theory for  the second order Schr\"odinger operators, arising in the
linearization around Boussinesq waves. We consider again the cases $p=2$ and $p=3$ separately.

\subsubsection{The case $p=2$}
In this section, we present some spectral results for $\cL$ that will be useful in the sequel.  The first is a technical lemma,
that will be used to establish the simplicity of the zero eigenvalue for $\ch=-\p_x \cL \p_x$.
\begin{lemma}\label{l1}
$ \dpr{\cL^{-1}1}{1} \neq 0.$
\end{lemma}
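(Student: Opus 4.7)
The plan is to build an explicit $\cL$-preimage of the constant function $1$ via parametric differentiation of the profile equation, reduce $\dpr{\cL^{-1}[1]}{1}$ to a derivative of the mass $M(\omega):=\int_0^T\vp_c(x;\omega)\,dx$, and finally check non-vanishing using the elliptic-function representation \eqref{kdv2}.

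With $\omega=1-c^2$, the operator is $\cL=-\p_x^2+\omega-\vp_c$ and the profile equation reads $\vp_c''=\omega\vp_c-\vp_c^2/2$. Direct substitution yields $\cL[1]=\omega-\vp_c$. For the companion identity, I invoke Lemma \ref{lkdv1} to regard the wave as a one-parameter family $\vp_c=\vp(x;\omega)$ with $b=b(\omega)$ so the period stays $T$; differentiating the profile equation in $\omega$ (with $T$ held fixed) gives $\cL[\p_\omega\vp_c]=-\vp_c$. Adding these two identities and dividing by $\omega\ne 0$ produces $\cL\bigl[(1-\p_\omega\vp_c)/\omega\bigr]=1$. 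Since the kernel of $\cL$ on $T$-periodic functions is spanned by $\vp_c'$ (a standard fact for cnoidal linearizations invoked elsewhere in the paper) and $\int_0^T\vp_c'\,dx=0$, the constant $1$ lies in the range of $\cL$, the inverse is unambiguous on $1$ modulo the kernel, and
\beqn
\dpr{\cL^{-1}[1]}{1}=\f{1}{\omega}\bigl(T-M'(\omega)\bigr).
\eeqn

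It therefore suffices to show $M'(\omega)\ne T$. Plugging \eqref{kdv2} into $M$, using the standard antiderivative $\int_0^{K(\ka)}cn^2(u;\ka)\,du=[E(\ka)-(1-\ka^2)K(\ka)]/\ka^2$, and the relations \eqref{kdv2a}--\eqref{kdv2b}, one simplifies to
\beqn
M(\omega)=\omega T+8\alpha\bigl[3E(\ka)+(\ka^2-2)K(\ka)\bigr],
\eeqn
so $T-M'(\omega)=-8\,(d/d\omega)\bigl[\alpha\bigl(3E(\ka)+(\ka^2-2)K(\ka)\bigr)\bigr]$. The principal obstacle is showing that this derivative is strictly nonzero on the admissible range. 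To do so, I would treat $\alpha=\alpha(\omega)$ and $\ka=\ka(\omega)$ as smooth functions of $\omega$ via the constraints $\omega^2=16\alpha^4(1-\ka^2+\ka^4)$ and $T=2K(\ka)/\alpha$ (the requisite implicit function statement being precisely Lemma \ref{lkdv1}), then apply the chain rule with the Legendre identities $K'(\ka)=[E(\ka)-(1-\ka^2)K(\ka)]/[\ka(1-\ka^2)]$ and $E'(\ka)=[E(\ka)-K(\ka)]/\ka$. The outcome is a concrete combination of $E(\ka)$ and $K(\ka)$ whose sign can be checked by hand; this is tedious but routine, and constitutes the main computational task of the proof. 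The structural part—extracting $\cL^{-1}[1]=(1-\p_\omega\vp_c)/\omega$—is short and general, while the sign verification is where the specific form of the cnoidal wave enters.
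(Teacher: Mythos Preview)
Your proposal is correct and follows essentially the same route as the paper. The paper also computes $\cL[1]=\omega-\vp_c$, obtains $\cL^{-1}\vp_c$ by differentiating the profile equation in the speed parameter (it uses $c$ rather than $\omega$, but since $\omega=1-c^2$ this is the same computation up to a chain-rule factor), and arrives at the identical formula
\[
\dpr{\cL^{-1}1}{1}=\f{1}{\omega}\bigl(T-\p_\omega M(\omega)\bigr),
\]
which it then rewrites as $\f{T}{\omega}\bigl[1-16\sqrt{1-\ka^2+\ka^4}K^2(\ka)F'(\ka)G(\ka)\bigr]$ using the same elliptic identities you invoke. One caveat: you describe the final sign verification as something that ``can be checked by hand''; the paper does not carry this out analytically but instead appeals to a plot of the bracketed function on $(0,1)$, so you may find that step less routine than anticipated.
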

\begin{proof}
In this case $\mathcal{L}=-\partial_x^2+w-\varphi_c$ and $Ker
\mathcal{L}=span \varphi_c'$. The spectral properties of the
operator  $\Lambda=-\frac{d^2}{dy^2}-4(1+k^2)+12k^2sn^2(y;k)$ in
$[0,2K(k)]$ are well-known \cite{HIK}. The first three(simple)
eigenvalues and corresponding eigenfunctions of $\Lambda$ are
  $$\begin{array}{ll}
    \mu_0=\kappa^2-2-2\sqrt{1-\kappa^2+4\kappa^4}<0, \\
    \psi_0(x)=dn( x;\kappa)[1-(1+2\kappa^2-\sqrt{1-\kappa^2+4\kappa^4})sn^2( x;\kappa)]>0\\
    \\
    \mu_1=0\\
    \psi_1(x)=dn( x;\kappa)sn(y;\kappa)cn(\alpha x;\kappa)={\frac{1}{2}}{\frac{d}{dy}}cn^2(y;\kappa)\\
    \\
    \mu_2=\kappa^2-2+2\sqrt{1-\kappa^2+4\kappa^4}>0\\
    \psi_2(x)=dn( x;\kappa)[1-(1+2\kappa^2+\sqrt{1-\kappa^2+4\kappa^4})sn^2(y;\kappa)].
   \end{array}
  $$
  Since the eigenvalues of $\mathcal{L}$ and $\Lambda$ are related by
  $\lambda_n=\alpha^2 \mu_n$, it follows that the first three eigenvalues of the operator
      $\mathcal{L}$, equipped with periodic boundary condition on $[0,2K(k)]$
      are simple and $\lambda_0<0, \lambda_1=0, \lambda_2>0$. The
      corresponding eigenfunctions are $\psi_0(\alpha x),
      \psi_1(\alpha x)=const. \varphi'$ and $\psi_2(\alpha x)$.
Note that $1, \varphi_c \perp Ker \mathcal{L}$ and
  \begin{equation}\label{kdv3}
    \mathcal{L}(1)=w-\varphi_c
  \end{equation}
and hence, we can take inverse in (\ref{kdv3})
  \begin{equation}\label{kdv4}
    1=w\mathcal{L}^{-1}1-\mathcal{L}^{-1}\varphi_c.
  \end{equation}
  Taking dot product with $1$ yields the relation
    $$\langle \mathcal{L}^{-1}1,1\rangle = {\frac{1}{w}}\langle
    1,1\rangle + {\frac{1}{w}}\langle \mathcal{L}^{-1}\varphi_c,
    1\rangle$$
    Differentiating (\ref{kdv1}) with respect to $c$, we get $\mathcal{L}[\frac{d\varphi_c}{dc}]=2c\varphi_c$, whence
    \begin{equation}\label{kdv5}
         \mathcal{L}^{-1}\varphi_c={\frac{1}{2c}}{\frac{d\varphi_c}{dc}}.
     \end{equation}
 Entering this last formula in the expression
  for $\langle \mathcal{L}^{-1}1,1\rangle$,
   \begin{equation}\label{kdv7}
   \langle \mathcal{L}^{-1}1,1\rangle
   ={\frac{1}{w}}T+{\frac{1}{2cw}}\left(\p_c \int_{0}^{T}{\varphi_c}dx\right)
   \end{equation}
   Using (\ref{kdv2a}) and that $\int_{0}^{2K(\kappa)}{cn^2(y;
   \kappa)}dy={\frac{2}{\kappa^2}}[E(\kappa)-(1-\kappa^2)K(\kappa)]$,
   we get
     \begin{equation}\label{kdv7b}
     \int_{0}^{T}{\varphi_c}dx=8\alpha[3E(\kappa)+(\kappa^2-2+\sqrt{1-\kappa^2+\kappa^4})K(\kappa)]={\frac{1}{T}}F(\kappa),
     \end{equation}
     where
     $$
     F(\kappa)=16K(\kappa)[3E(\kappa)+(\kappa^2-2+\sqrt{1-\kappa^2+\kappa^4})K(\kappa)].
     $$
   Now, we need to compute
   $$
   \p_c F(\ka)=F'(\ka) \frac{d\kappa}{dw}
   \frac{d w}{d c}=-2c F'(\ka) \frac{d\kappa}{dw}.
   $$
    Thus, to compute $\frac{d\kappa}{dw}$, we differentiate with respect to $w$ the following relation
    \begin{equation}
    \label{n:10}
    w^2=16 \alpha^4(1-\kappa^2+\kappa^4)=
     256 \f{K^4(\ka)(1-\kappa^2+\kappa^4)}{T^4},
     \end{equation}
  obtained from \eqref{kdv2a} and \eqref{kdv2b}.  We get
       \begin{equation}\label{kdv7a}
       {\frac{d\kappa}{dw}}=wT^4G(\kappa),
       \end{equation}
       where
       $$G(\kappa)={\frac{1}{128{\frac{d[K^4(\kappa)(1-\kappa^2+\kappa^4)]}{d\kappa}}}}.$$
    From the above relations, \eqref{kdv2b} and \eqref{n:10},  and  we have
      \begin{equation}
      \label{kdv8}
         \langle
         \mathcal{L}^{-1}1,1\rangle={\frac{T}{w}}-T^3F'(\kappa)G(\kappa)={\frac{T}{w}}[1-16\sqrt{1-\kappa^2+\kappa^4}K^2(\kappa)F'(\kappa)G(\kappa)].
       \end{equation}
       The expression in the   brackets  above is strictly positive (see the graphic below)
   which proves the lemma.
   \begin{figure}[h7]
\centering
\includegraphics[width=8cm,height=6cm]{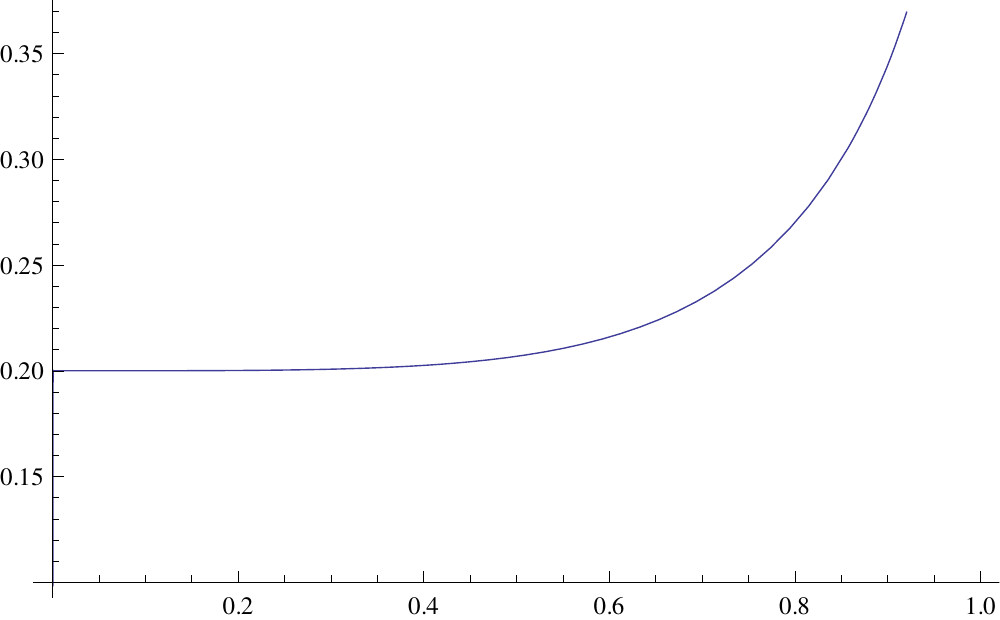}
\caption{Graph of the positive function $[1-16\sqrt{1-\kappa^2+\kappa^4}K^2(\kappa)F'(\kappa)G(\kappa)]$}
\label{fig7}
\end{figure}
\end{proof}

\subsubsection{The case $p=3$}
Consider
  \begin{equation}\label{mkdv3}
  \mathcal{L}=-\partial_x^2+w-3\varphi^2.
  \end{equation}
  We use (\eqref{mkdv2})  to
rewrite the operator $\mathcal{L}$ in an  appropriate form. From
the expression for $\varphi(x)$ from (\ref{mkdv2})
     and the relations
     between the elliptic functions $sn(x)$, $cn(x)$ and $dn(x)$, we obtain
       $$\mathcal{L}=\alpha^{2}[ -\partial_{y}^{2}+6k^{2} sn^{2}(y)-4-k^2] $$
     where $y=\alpha x$.

     It is well-known \cite{HIK}
     that the first five eigenvalues of
     $\Lambda =-\partial_{y}^{2}+6k^{2}sn^{2}(y, k)$,
     with periodic boundary conditions on $[0, 4K(k)]$, where
     $K(k)$ is the complete elliptic integral of the first kind, are
     simple. These eigenvalues, with their  corresponding eigenfunctions are as follows
      $$\begin{array}{ll}
         \nu_{0}=2+2k^2-2\sqrt{1-k^2+k^4},
         & \psi_{0}(y)=1-(1+k^2-\sqrt{1-k^{2}
         +k^{4}})sn^{2}(y, k),\\[1mm]
         \nu_{1}=1+k^{2}, & \psi_{1}(y)=cn(y, k)dn(y, k)
         =sn'(y, k),\\[1mm]
         \nu_{2}=1+4k^{2}, & \psi_{2}(y)=sn(y, k)dn(y, k)
         =-cn'(y, k),\\[1mm]
         \nu_{3}=4+k^{2}, & \psi_{3}(y)=sn(y, k)cn(y, k)
         =-k^{-2}dn'(y, k),\\[1mm]
         \nu_{4}=2+2k^{2}+2\sqrt{1-k^{2}+k^{4}},
         & \psi_{4}(y)=1-(1+k^{2}+\sqrt{1-k^{2}
         +k^{4}})sn^{2}(y, k).
        \end{array}
      $$
      It follows that the first three eigenvalues of the operator
      $\mathcal{L}$, equipped with periodic boundary condition on $[0,2K(k)]$
      (that is, in the case of left and right family),
      are simple and $\lambda_0=\alpha^2(\nu_0-\nu_3)<0, \;
      \lambda_1=\alpha^2(\nu_3-\nu_3)=0, \;
      \lambda_{2}=\alpha^2(\nu_4-\nu_3)>0$.
The corresponding eigenfunctions are $\chi_0=\psi_0(\alpha x),
\chi_1=\varphi'(x), \chi_2=\psi_4(\alpha x)$.
Thus, we have proved the following
\begin{proposition}
\label{prop:3}
 The linear
operator $\cL$ defined by $\eqref{mkdv3}$ has the following
spectral properties:
\begin{itemize}
\item[(i)]
  The first three eigenvalues of
$\cL$ are simple. \\
\item[(ii)] \it The second eigenvalue of $\cL$
is $\la_1=0$, which is simple. \\
\item[(iii)] The rest of the spectrum   consists of a  discrete
set of eigenvalues, which are strictly positive.
\end{itemize}
\end{proposition}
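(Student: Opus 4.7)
The plan is to reduce $\cL$ to a shifted version of the classical Lam\'e operator, whose spectrum is completely known, and then use a parity/periodicity argument to select the eigenfunctions that survive the fundamental period $T=2K(k)/\alpha$. First I would insert the explicit dnoidal profile \eqref{mkdv2} into $\cL = -\partial_x^2 + w - 3\varphi_c^2$ and use the identities $dn^2(y;k)=1-k^2 sn^2(y;k)$ together with the relations between $\alpha$, $\varphi_1$ and $w$ from \eqref{mkdv2} to obtain, in the rescaled variable $y=\alpha x$,
\begin{equation*}
\cL = \alpha^{2}\bigl[-\partial_{y}^{2} + 6k^{2} sn^{2}(y;k) - (4+k^{2})\bigr] = \alpha^{2}\bigl[\Lambda - \nu_3\bigr],
\end{equation*}
where $\Lambda = -\partial_y^2+6k^2 sn^2(y;k)$ is the standard Lam\'e operator and $\nu_3 = 4+k^2$.

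Next I would invoke the classical Lam\'e spectral result (as recorded in \cite{HIK}) on the interval $[0,4K(k)]$: under periodic boundary conditions, $\Lambda$ has exactly five simple eigenvalues $\nu_0<\nu_1<\nu_2<\nu_3<\nu_4$ below the rest of its discrete spectrum, with the explicit eigenfunctions $\psi_0,\ldots,\psi_4$ listed just above Proposition \ref{prop:3}. Since the wave $\varphi_c$ has fundamental period $T=2K(k)/\alpha$, I would consider $\cL$ on $[0,2K(k)]$ in the $y$-variable and ask which of the $\psi_j$ are $2K(k)$-periodic. A direct inspection using $sn(y+2K;k)=-sn(y;k)$, $cn(y+2K;k)=-cn(y;k)$, $dn(y+2K;k)=dn(y;k)$ shows that $\psi_0,\psi_3,\psi_4$ are $2K$-periodic, while $\psi_1=cn\cdot dn$ and $\psi_2=sn\cdot dn$ are antiperiodic, hence only $4K$-periodic. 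They therefore do not belong to the spectrum of $\cL$ under our periodic boundary conditions.

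Consequently, the three lowest eigenvalues of $\cL$ restricted to $[0,T]$ are exactly
\begin{equation*}
\lambda_{0}=\alpha^{2}(\nu_{0}-\nu_{3})<0,\qquad \lambda_{1}=\alpha^{2}(\nu_{3}-\nu_{3})=0,\qquad \lambda_{2}=\alpha^{2}(\nu_{4}-\nu_{3})>0,
\end{equation*}
and they are simple because the corresponding eigenvalues of $\Lambda$ are simple. The sign of $\lambda_0$ follows from $\nu_0-\nu_3 = -2+k^2-2\sqrt{1-k^2+k^4}<0$, and similarly $\lambda_2>0$. The middle eigenfunction is $\psi_3(\alpha x)$, which up to a constant equals $\varphi_c'(x)$ (one can see this directly from $\psi_3 \propto dn'$ and the formula $\varphi_c = \mp\varphi_1 dn(\alpha x;k)$), confirming (ii). Finally, (iii) is immediate from standard Sturm--Liouville / Hill theory: $\cL$ has compact resolvent on the bounded interval $[0,T]$ with periodic boundary conditions, so its spectrum is purely discrete, and the remaining eigenvalues lie strictly above $\lambda_2$.

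The proof is mostly bookkeeping; the only genuinely delicate point is the periodicity check that eliminates $\psi_1,\psi_2$, since a careless treatment would double-count the kernel or miss the simplicity of $\lambda_0,\lambda_2$. Once this is settled, all three claims follow at once from the known Lam\'e spectrum.
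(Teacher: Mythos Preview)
Your proof is correct and follows essentially the same route as the paper: rewrite $\cL$ as a shift of the Lam\'e operator $\Lambda=-\partial_y^2+6k^2 sn^2(y;k)$, quote the five known simple eigenvalues of $\Lambda$ on $[0,4K(k)]$ from \cite{HIK}, and then restrict to the $2K(k)$-periodic eigenfunctions $\psi_0,\psi_3,\psi_4$ to read off $\lambda_0<0$, $\lambda_1=0$, $\lambda_2>0$. Your explicit periodicity check (using the shift identities for $sn,cn,dn$ to discard $\psi_1,\psi_2$) and the compact-resolvent remark for (iii) are in fact more detailed than what the paper writes, which simply asserts the conclusion after listing the Lam\'e data.
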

Next, we verify the following technical result.
\begin{lemma}
\label{l2}
The operator $\cL$ verifies $ \langle \mathcal{L}^{-1}1,1\rangle \neq 0.$
\end{lemma}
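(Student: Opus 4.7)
The strategy mirrors the $p=2$ case (Lemma~\ref{l1}): since $\ker\mathcal{L}=\mathrm{span}\{\varphi'\}$ by Proposition~\ref{prop:3}, and $1\perp\varphi'$ by periodicity, the datum $1$ lies in $\mathrm{Ran}(\mathcal{L})$, so it is legitimate to solve $\mathcal{L}u=1$ and then integrate. The essential simplification compared with the $p=2$ calculation is that the cubic nonlinearity lets us identify $\mathcal{L}^{-1}(1)$ in closed form, rather than going through the chain $\mathcal{L}[d\varphi_c/dc]=2c\varphi_c$, $\mathcal{L}[d\varphi_c/dw]=-\varphi_c$ and then manipulating the period function.

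Concretely, I would first use the profile equation $\varphi''=w\varphi-\varphi^3$ together with its first integral $(\varphi')^2=b+w\varphi^2-\varphi^4/2$ from \eqref{mkdv1} to compute by direct differentiation
$$
\mathcal{L}(\varphi^2)=-(\varphi^2)''+w\varphi^2-3\varphi^4=-3w\varphi^2-2b.
$$
Combined with $\mathcal{L}(1)=w-3\varphi^2$, the linear combination $w\cdot 1-\varphi^2$ cancels the $\varphi^2$ terms and produces $\mathcal{L}(w-\varphi^2)=w^2+2b$. Since $\int_0^T(w-\varphi^2)\varphi'\,dx=0$ by periodicity, no kernel correction is needed, and one obtains the clean formula $\mathcal{L}^{-1}(1)=\dfrac{w-\varphi^2}{w^2+2b}$, provided $w^2+2b\neq 0$.

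With that formula in hand, I would make everything explicit in elliptic form. Using $\varphi=\varphi_1\,dn(\alpha x,k)$, the relations $\varphi_1^2=\frac{2w}{2-k^2}$ and $b=\varphi_1^2(\varphi_1^2/2-w)$, and the algebraic identity $(2-k^2)^2-4(1-k^2)=k^4$, one gets $w^2+2b=\frac{w^2k^4}{(2-k^2)^2}>0$; and using $\int_0^{2K(k)}dn^2\,dy=2E(k)$ together with the period formula \eqref{z:200} gives
$$
\langle\mathcal{L}^{-1}(1),1\rangle=\frac{wT-\int_0^T\varphi^2\,dx}{w^2+2b}=\frac{2\sqrt{w}\,(2-k^2)^{3/2}}{w^2 k^4}\bigl[(2-k^2)K(k)-2E(k)\bigr].
$$

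The main obstacle — and the only nonroutine step — is to verify the positivity of $(2-k^2)K(k)-2E(k)$ on $(0,1)$, since this quantity vanishes at $k=0$ and the obvious bounds leave the sign ambiguous. I would handle it by differentiating and using the standard identities $K'(k)=\frac{E-(1-k^2)K}{k(1-k^2)}$ and $E'(k)=\frac{E-K}{k}$; after cancellation the derivative collapses to $\frac{k[E(k)-(1-k^2)K(k)]}{1-k^2}$, which is strictly positive on $(0,1)$ because $E(k)>(1-k^2)K(k)$ is exactly the monotonicity statement $K'(k)>0$. Therefore $(2-k^2)K(k)-2E(k)>0$ on $(0,1)$, and consequently $\langle\mathcal{L}^{-1}(1),1\rangle>0$, which proves the lemma.
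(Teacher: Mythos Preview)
Your proof is correct and proceeds by a genuinely different, more direct route than the paper's. The paper expands $1$ in the eigenbasis of $\mathcal{L}$ (equivalently, of the rescaled Lam\'e operator $\Lambda$), observes that $\langle\psi_i,1\rangle$ vanishes except for $i=0,4$, and then reduces $\langle\mathcal{L}^{-1}1,1\rangle$ to a two-term sum involving the explicit functions $B_1,\ldots,B_4$, whose positivity is checked by graphing. Your approach instead exploits the cubic nonlinearity to produce a closed-form preimage: the profile equation and its first integral give $\mathcal{L}(w-\varphi^2)=w^2+2b$, so $\mathcal{L}^{-1}(1)=(w-\varphi^2)/(w^2+2b)$, and the inner product reduces to the single factor $(2-k^2)K(k)-2E(k)$, whose positivity you establish analytically via $\frac{d}{dk}[(2-k^2)K-2E]=\frac{k[E-(1-k^2)K]}{1-k^2}>0$. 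Your argument is shorter, avoids the spectral machinery entirely, and yields a fully rigorous sign determination rather than relying on a plotted graph; the paper's eigenfunction route, by contrast, is more portable to situations where no such algebraic preimage is available.
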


\begin{proof}
This statement was needed and proved in the work of Deconinck-Kapitula, \cite{DK}, but we repeat the short argument for completeness. 
First we will prove that for $i\neq 0, 4$, $\langle \psi_i, 1\rangle=0.$

  Using the expression for $\Lambda$ and $\psi_4$, we get
    $$\nu_i \langle \psi_i,1\rangle =6\kappa^2 \langle sn^2(y; \kappa), \psi_i\rangle={\frac{12\kappa^2}{\nu_4}}\langle 1-\psi_4, \psi_i\rangle.
    $$
It follows that for $i\neq 4$,
\begin{equation}
\label{300}
0=\langle \psi_4, \psi_i\rangle=\left( 1-{\frac{\nu_i
\nu_4}{12\kappa^2}}\right)\langle \psi_i,1\rangle .
\end{equation}
Observe however that $\nu_0 \nu_4=12\kappa^2$, which means that $  \left( 1-{\frac{\nu_i
\nu_4}{12\kappa^2}}\right)\neq 0$, whenever $i\neq 0$ (since $\nu_i\neq \nu_0$). By \eqref{300}, this implies that $\dpr{\psi_i}{1}=0, i\neq 0,4$.

From (Theorem 2.15, in \cite{MaWi}), the eigenfunctions of the
operator $\cL$ form an orthonormal  basis of  $L^2[0,T]$ and hence
we compute $\dpr{\cL^{-1} 1}{1}$  by expanding $1$ in the
eigenfunction expansion. Note that all  terms corresponding to
mean value zero eigenfunctions disappear (since $\dpr{\psi_i}{1}=0, i\neq 0,4$).
Hence, the expansion for $\dpr{\cL^{-1} 1}{1}$ has only two
non-zero terms.  More precisely, we have
  \begin{eqnarray}
\nonumber
  \langle \mathcal{L}^{-1}1,1\rangle & = & {\frac{ \langle 1,
  \chi_0\rangle ^2}{\alpha^2(\nu_0-\nu_3)||\chi_0||^2}}+{\frac{ \langle 1,
  \chi_2\rangle ^2}{\alpha^2(\nu_4-\nu_3)||\chi_2||^2}}\\
 \label{mkdv3a}
  &=&
  {\frac{2}{\alpha^3}}\left[
{\frac{B_1(\kappa)}{(\kappa^2-2-2\sqrt{1-\kappa^2+\kappa^4})B_3(\kappa)}}+
  {\frac{B_2(\kappa)}{(\kappa^2-2+2\sqrt{1-\kappa^2+\kappa^4})B_4(\kappa)}}\right]
  \end{eqnarray}
where we have used the formulas
\begin{eqnarray*}
 & &  \int_{0}^{2K(\kappa)} sn^2(y) dy=\frac{2}{\kappa^2} [K(\kappa)-E(\kappa)]; \\
 & &  \int_{0}^{2K(\kappa)}{sn^4(y)}dy={\frac{2}{3\kappa^4}}[(2+\kappa^2)K(\kappa)-2(1+\kappa^2)E(\kappa)]
\end{eqnarray*}
 and
 \begin{eqnarray*}
  B_1(\kappa) &=&  \left(
  {\frac{\sqrt{1-\kappa^2+\kappa^4}-1}{\kappa^2}}K(\kappa)+{\frac{1+\kappa^2-\sqrt{1-\kappa^2+\kappa^4}}{\kappa^2}}E(\kappa)\right)^2, \\
  B_2(\kappa) &=& \left(-{\frac{\sqrt{1-\kappa^2+\kappa^4}+1}{\kappa^2}}K(\kappa)+{\frac{1+\kappa^2+\sqrt{1-\kappa^2+\kappa^4}}{\kappa^2}}E(\kappa)\right)^2,\\
  B_3(\kappa)&= &K(\kappa)-{\frac{2(1+\kappa^2-\sqrt{1-\kappa^2+\kappa^4})}{\kappa^2}}[K(\kappa)-E(\kappa)],\\
  \\
  &+&{\frac{(1+\kappa^2-\sqrt{1-\kappa^2+\kappa^4})^2}{3\kappa^4}}
  [(2+\kappa^2)K(\kappa)-2(1+\kappa^2)E(\kappa)]; \\
  B_4(\kappa)&= & K(\kappa)-{\frac{2(1+\kappa^2+\sqrt{1-\kappa^2+\kappa^4})}{\kappa^2}}[K(\kappa)-E(\kappa)]\\
  \\
  &+& {\frac{(1+\kappa^2+\sqrt{1-\kappa^2+\kappa^4})^2}{3\kappa^4}}
  [(2+\kappa^2)K(\kappa)-2(1+\kappa^2)E(\kappa)].
  \end{eqnarray*}
 From the graph of the function ${\frac{B_1(\kappa)}{(\kappa^2-2-2\sqrt{1-\kappa^2+\kappa^4})B_3(\kappa)}}+
  {\frac{B_2(\kappa)}{(\kappa^2-2+2\sqrt{1-\kappa^2+\kappa^4})B_4(\kappa)}}$ below, we realize that $\dpr{\cL^{-1} 1}{1}>0$ and hence Lemma \ref{l2} is established.
    \begin{figure}[h8]
\centering
\includegraphics[width=8cm,height=6cm]{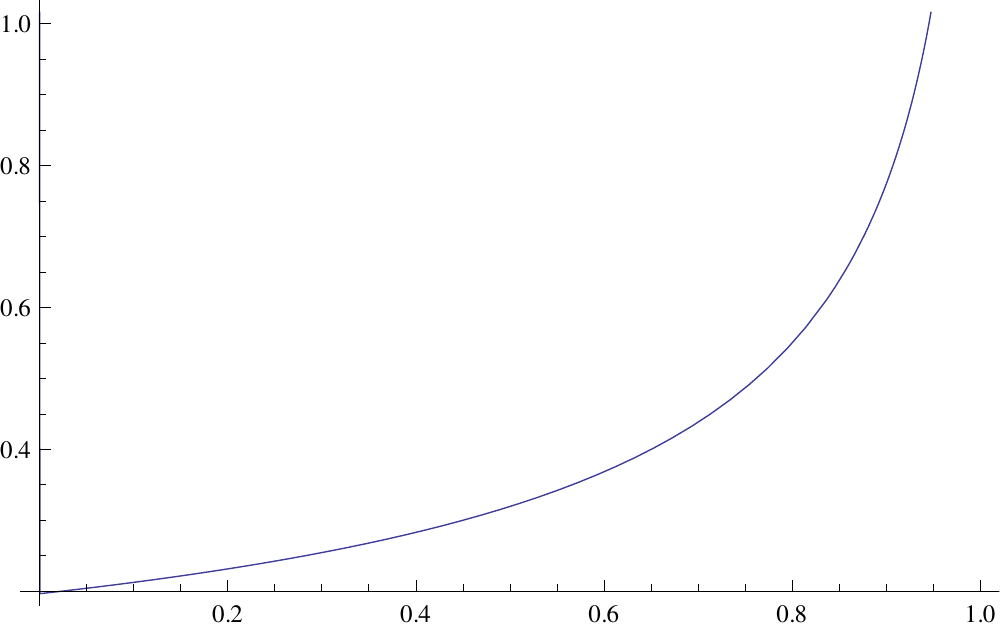}
\caption{Graph of the positive function ${\frac{B_1(\kappa)}{(\kappa^2-2-2\sqrt{1-\kappa^2+\kappa^4})B_3(\kappa)}}+
  {\frac{B_2(\kappa)}{(\kappa^2-2+2\sqrt{1-\kappa^2+\kappa^4})B_4(\kappa)}}$}
\label{fig8}
\end{figure}
\end{proof}

\pagebreak
Lemma \ref{l1} and Lemma \ref{l2} allow us to verify the important property about the simplicity of the zero eigenvalue for the operator $\ch_c$.
\begin{corollary}
\label{cor:p}
Let $p=2$ or $p=3$. Then, the operator $\ch_c=-\p_x \cL\p_x $ (corresponding to $f(z)=z^3, \f{z^2}{2}$) defined in \eqref{z:2}
has zero as a simple eigenvalue in $L^2_0[0,T]$, with an eigenfunction \\  $\psi_0=\vp_c-\f{1}{T}\int_0^T \vp_c$.
\end{corollary}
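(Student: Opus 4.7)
The plan is to translate the kernel equation $\ch_c\psi=0$ into a question about $\cL$, which we understand well, and then use the Fredholm alternative together with Lemmas \ref{l1} and \ref{l2}.

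First, I would observe that $\psi\in D(\ch_c)\cap L^2_0[0,T]$ satisfies $\ch_c\psi=-\p_x(\cL\p_x\psi)=0$ if and only if $\cL(\p_x\psi)=\al$ for some constant $\al$. By differentiating the profile equation \eqref{a:1010} with $a=0$ in $x$, one sees $\cL\vp_c'=0$, so $\vp_c'\in\ker\cL$. From the spectral description in Proposition \ref{prop:3} (for $p=3$) and the analogous statement preceding Lemma \ref{l1} (for $p=2$), $\ker\cL$ is one-dimensional and spanned by $\vp_c'$. Since $\cL$ is self-adjoint, the equation $\cL f=\al$ is solvable if and only if $\al\perp\vp_c'$ in $L^2[0,T]$; this holds trivially because $\int_0^T\vp_c'\,dx=0$ by periodicity.

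Next, I would write the general solution as
\[
\p_x\psi \;=\; \al\,\cL^{-1}(1)\,+\,\be\,\vp_c',
\]
where $\cL^{-1}(1)$ denotes any fixed preimage of $1$ (e.g.\ the one orthogonal to $\vp_c'$). The crucial constraint is that $\psi$ is $T$-periodic, so $\int_0^T \p_x\psi\,dx=0$. Since $\int_0^T\vp_c'\,dx=0$, this collapses to
\[
\al\,\dpr{\cL^{-1}(1)}{1}\;=\;0.
\]
By Lemma \ref{l1} (for $p=2$) and Lemma \ref{l2} (for $p=3$), $\dpr{\cL^{-1}(1)}{1}\neq 0$. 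Therefore $\al=0$, and $\p_x\psi=\be\vp_c'$. Integrating gives $\psi=\be\vp_c+c_0$ for some constant $c_0$, and the zero-mean requirement $\psi\in L^2_0[0,T]$ forces $c_0=-\be\,\tfrac{1}{T}\int_0^T\vp_c$. Hence $\ker(\ch_c)\cap L^2_0[0,T]$ is one-dimensional and spanned exactly by $\psi_0=\vp_c-\tfrac{1}{T}\int_0^T\vp_c$, as claimed.

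The only nontrivial input beyond soft functional analysis is the nonvanishing of $\dpr{\cL^{-1}(1)}{1}$, which is precisely the content of Lemmas \ref{l1} and \ref{l2} (and where all the explicit elliptic-function computations live); given these, the corollary is a short Fredholm argument. The main potential pitfall is to remember that we are working on $L^2_0[0,T]$ rather than $L^2[0,T]$, which is exactly what converts the two-dimensional kernel on the full space (coming from both $\vp_c'$ at the $\p_x$-level and a constant shift) into a one-dimensional kernel on the mean-zero subspace.
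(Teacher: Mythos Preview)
Your argument is correct and follows essentially the same route as the paper: reduce $\ch_c\psi=0$ to $\cL(\psi')=\al$, use $\int_0^T\psi'=0$ together with $\dpr{\cL^{-1}1}{1}\neq 0$ (Lemmas \ref{l1}, \ref{l2}) to force $\al=0$, and then identify $\psi'\in\ker\cL=\mathrm{span}\{\vp_c'\}$. Your write-up is in fact slightly more careful than the paper's, since you explicitly include the homogeneous contribution $\be\vp_c'$ when inverting $\cL$, whereas the paper omits it at the intermediate step and only recovers it after concluding $\al=0$.
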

\begin{proof}
First $\vp_c-\f{1}{T}\int \vp_c(x) dx$ is easily seen to be an eigenfunction, since
$$
\ch_c[\vp_c-\f{1}{T}\int \vp_c(x) dx]=-\p_x \cL[\vp_c']=0,
$$ since $\vp_c'$ is an eigenfunction for $\cL$.

Regarding uniqueness, let $f\in L^2_0[0,T]$, so that $\ch_c f=0$. It follows that
$$
\cL[f']=c=const.
$$
Since $Ker(\cL)=span \{\vp_c'\}$ and $1\perp \vp_c'$, we can resolve the last equation as $f'=c \cL^{-1} 1$. Thus
$$
0=\dpr{1}{f'}=c\dpr{1}{\cL^{-1} 1},
$$
whence $c=0$, since $\dpr{1}{\cL^{-1} 1}\neq 0$ by Lemma \ref{l2}. It follows that $f'$ is an eigenvector for $\cL$. Thus,
$f'=\mu \vp_c'$, by Proposition \ref{prop:3}. But then,
$f=\mu[\vp_c-\f{1}{T}\int \vp_c(x) dx]$, since we are in the space $L^2_0(0,T)$ and the uniqueness is established.
\end{proof}
\subsection{Spectral theory for the Schr\"odinger operator $\ch$ of the KGZ system}
\label{sec:3.3}
First, as in the Boussinesq case, we show that the operator $\ch$ has a simple eigenvalue at zero. In addition, we identify the unique
(up to a multiplicative constant) eigenfunction of $\ch$. Recall that in our considerations, we work with the space $L^2_0[0,T]$,
that is the second component will contain only functions with mean value zero. This proposition mirrors closely the corresponding statement
of Proposition 8 in \cite{SS1}, with a few notable differences.
\begin{proposition}
\label{prop:k1}
The self-adjoint operator $\ch$ introduced in \eqref{k:22} has an eigenvalue at zero, which is simple. In addition, the unique (up to a multiplicative constant)
eigenfunction    is given by
$\vec{\psi}_0=\left(\begin{array}{c} \vp_c' \\
-\f{1}{2w}\left(\vp_c^2-T^{-1}\int_0^T \vp_c^2\right)
\end{array}\right)$.
\end{proposition}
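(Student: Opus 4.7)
The plan is to verify the claimed kernel element directly, and then reduce the uniqueness part to a scalar Fredholm compatibility condition involving a second-order Schr\"odinger operator. I would begin with existence: the key input is the profile equation \eqref{k:5}, namely $-w\vp_c''+\vp_c-\f{\vp_c^3}{2w}=0$. Differentiating once gives $-w\vp_c'''+\vp_c'=\f{3\vp_c^2\vp_c'}{2w}$, so $H_1\vp_c'=\f{\vp_c^2\vp_c'}{w}$, while $A[-\f{1}{2w}(\vp_c^2-T^{-1}\int_0^T\vp_c^2\,dy)]=-\f{\vp_c^2\vp_c'}{w}$; these cancel in the first row. For the second row, $A^*\vp_c'=-\f{1}{2}(\vp_c^2)''$ and $H_2[-\f{1}{2w}(\vp_c^2-T^{-1}\int_0^T\vp_c^2\,dy)]=\f{1}{2}(\vp_c^2)''$ cancel as well. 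The mean-value correction is chosen precisely so that the second entry belongs to $L^2_0[0,T]$.

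For simplicity, I would take $\vec{f}=(f_1,f_2)\in D(\ch)$ with $\ch\vec{f}=0$ and exploit the simple structure of $H_2=-w\p_x^2$ by solving the second row first. The equation $A^*f_1+H_2f_2=0$ integrates in $x$ to $wf_2'=-\vp_c f_1+c_1$, where periodicity of $f_2$ forces $c_1=T^{-1}\int_0^T\vp_c f_1\,dx$ and the mean-zero requirement pins down the remaining integration constant. Substituting $f_2'$ into the first row leaves the scalar equation
\[
\cL_1 f_1 = -\f{c_1}{w}\vp_c, \qquad \cL_1 := -w\p_x^2 + 1 - \f{3\vp_c^2}{2w}.
\]
A Lam\'e-operator analysis (the change of variables $y=\al x$ recasts $\cL_1$ in the form $(2-\ka^2)^{-1}[-\p_y^2+6\ka^2 sn^2(y,\ka)-(4+\ka^2)]$, whose periodic spectrum on $[0,2K(\ka)]$ is explicit from the same table used in Proposition \ref{prop:3}) shows that $\cL_1$ has simple kernel spanned by $\vp_c'$. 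Since $\vp_c\perp\vp_c'$ by periodicity, the Fredholm alternative produces $f_1=-\f{c_1}{w}\cL_1^{-1}\vp_c+\mu\vp_c'$ for some $\mu\in\rone$. Feeding this back into the self-consistency relation for $c_1$ yields the scalar compatibility condition
\[
c_1\Bigl(1+\f{1}{wT}\dpr{\cL_1^{-1}\vp_c}{\vp_c}\Bigr)=0,
\]
and nonvanishing of the parenthesized factor forces $c_1=0$, whence $f_1=\mu\vp_c'$ and back-substitution recovers $\vec{f}=\mu\vec{\psi}_0$.

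The heart of the proof --- and where I expect the main obstacle --- is verifying that the parenthesized factor does not vanish. For this I would first establish the closed-form identity $\cL_1[w\p_w\vp_c-\vp_c]=\vp_c$, obtained by differentiating \eqref{k:5} in $w$ at fixed period $T$ and simplifying with the direct rearrangement $\cL_1\vp_c=-\vp_c^3/w$ of \eqref{k:5}. Since $\vp_c\perp\vp_c'$, this gives
\[
\dpr{\cL_1^{-1}\vp_c}{\vp_c}=\f{w}{2}\f{d}{dw}\|\vp_c\|_{L^2[0,T]}^2-\|\vp_c\|_{L^2[0,T]}^2,
\]
which upon substituting the explicit parametrization $\vp_c=\vp_1\,dn(\al x,\ka)$ with $\vp_1^2=4w/(2-\ka^2)$ and $\al^2=1/(w(2-\ka^2))$, the identity $\int_0^{2K(\ka)}dn^2\,dy=2E(\ka)$, and the period relation \eqref{k:10b} (with $\ka=\ka(w)$ implicitly fixed by holding $T$ constant) reduces the problem to a single-variable inequality in $\ka\in(0,1)$. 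I expect this inequality to be verifiable by the same elliptic-function manipulations and graphical check used in the concluding steps of Lemmas \ref{l1} and \ref{l2}.
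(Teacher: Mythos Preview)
Your proposal is correct and follows essentially the same route as the paper: reduce the kernel equation to $\cL_1 f_1=-\f{c_1}{w}\vp_c$ via one integration of the second row, use the Lam\'e spectral table to identify $Ker(\cL_1)=\mathrm{span}\{\vp_c'\}$, derive the same compatibility condition (your $c_1(1+\f{1}{wT}\dpr{\cL_1^{-1}\vp_c}{\vp_c})=0$ is the paper's $c_0(\f{\dpr{\vp}{\cL^{-1}\vp}}{w}+T)=0$ divided by $T$), and evaluate $\dpr{\cL_1^{-1}\vp_c}{\vp_c}$ through the identity $\cL_1^{-1}\vp_c=w\p_w\vp_c-\vp_c$ and a graphical check in $\ka$. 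The only cosmetic difference is that you verify $\vec\psi_0\in Ker(\ch)$ at the outset, whereas the paper recovers it at the end of the uniqueness argument.
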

\begin{proof}
Let $\left(\begin{array}{c} f \\ g
\end{array}\right)$ be an eigenvector corresponding to a zero eigenvalue, that is $\ch \left(\begin{array}{c} f \\ g
\end{array}\right)=0$. In other words,
\begin{equation}
\label{k:50}
\left\{
\begin{array}{l}
-w f''+f-\f{\vp^2}{2w} f+\vp g'=0 \\
-(\vp f)'-w g''=0.
\end{array}\right.
\end{equation}
Integrating the second equation in $x$ implies that for some constant $c_0$, we have
$$
g'=-\f{\vp f}{w}+c_0,
$$
whence the equation for $f$ becomes
\begin{equation}
\label{k:60}
 -w f''+f-\f{3\vp^2}{2w} f+c_0 \vp=0
\end{equation}
We will show that $c_0=0$ and then $f=d \vp_c'$ for some constant $d$.
To that end, recall the defining equation for $\vp_c$, namely \eqref{k:5} and differentiate it with respect to $x$. We get
\begin{equation}
\label{k:70}
-w \vp''_c+\vp'_c-\f{3\vp^2}{2w} \vp'_c=0.
\end{equation}
Following the usual analogy with the KdV equation, we introduce the second order differential operator
$$
\cL=-w\p^2_{x}+1-\f{3\vp^2}{2w}.
$$

Using that $dn^2+\kappa^2sn^2=1$ and $w\alpha^2={\frac{1}{2-\kappa^2}}$, we get
  $$\begin{array}{ll}
    \cL &=-w\p^2_{x}+1-{\frac{3\vp_1^2}{2w}}(1-\kappa^2 sn^2(\alpha x, \kappa))\\
    \\
    &=-w\p^2_{x}+1-6w\alpha^2+6w\alpha^2\kappa^2sn^2(\alpha x, \kappa)\\
    \\
    &=w\alpha^2(-\p^2_{y}+6sn^2(y, \kappa)-\kappa^2-4),
    \end{array}
  $$
   where $y=\alpha x$. It follows that the first three eigenvalues of the operator
      $\mathcal{L}$, equipped with periodic boundary condition on $[0,2K(k)]$,
      are simple and $\lambda_0=w\alpha^2(\nu_0-\nu_3)<0, \;
      \lambda_1=w\alpha^2(\nu_3-\nu_3)=0, \;
      \lambda_{2}=w\alpha^2(\nu_4-\nu_3)>0$.
The corresponding eigenfunctions are $\chi_0=\psi_0(\alpha x),
\chi_1=\varphi'(x), \chi_2=\psi_4(\alpha x)$, where $\nu_i$ and $\psi_i$ are given in (3.2.2).

 In particular, the kernel of $\cL$ is spanned by $\vp_c'$, i.e. $Ker(\cL)=span[\vp_c']$. Going back to \eqref{k:60}, we can rewrite it as
$$
\cL[f]+c_0 \vp_c=0.
$$
Note that all solutions to  this last equation are given by
$$
f=d\vp' - c_0 \cL^{-1}[\vp_c],
$$
where $d$ is an arbitrary scalar, since $\vp_c\perp span[\vp_c']= Ker(\cL)$.
Plugging this last formula in the equation for $g$ yields
$$
g'=-\f{\vp f}{w}+c_0=-\f{\vp}{w}\left(d\vp'-c_0\cL^{-1}[\vp]\right)+c_0=
-\f{d}{w}\vp \vp'+c_0\left(\f{\vp \cL^{-1}[\vp]}{w}+1\right)
$$
Integrating the last expression in $[0,T]$ and using the periodicity  yields
\begin{equation}
\label{k:80}
c_0\left(\f{\dpr{\vp}{\cL^{-1}[\vp]}}{w}+T\right)=0.
\end{equation}
Thus, if we verify that $\dpr{\vp}{\cL^{-1}[\vp]}\neq - T w$, we would conclude from \eqref{k:80} that $c_0=0$, whence $f=d \vp'$. Furthermore,
$g'= -\f{d}{w}\vp \vp'$, whence
$$
g=-d\f{\vp^2}{2 w}+const.
$$
Recall however that the $const.$ in the formula above is uniquely determined by the fact that $g$ has mean value zero (i.e. $g\in L^2_0[0,T]$),  whence
$$
g=d\left(-\f{\vp^2}{2w }+ \f{\int_0^T \vp^2}{2 T w}\right).
$$
Thus, Proposition \ref{prop:k1} is established modulo the following\\
\\
{\bf Fact:} $\dpr{\vp_c}{\cL^{-1}[\vp_c]}>-\f{Tw}{3}$,  so in particular $\dpr{\vp}{\cL^{-1}[\vp]} > - T w$ \\
\\
  From (\ref{k:5}), we have
    $\vp_c^3=-2w^2\vp_c''+2w\vp_c$ and thus
    \begin{equation}
    \label{n:500}
    \cL\vp=-w\vp_c''+\vp_c-\f{3}{2w}\vp_c^3=2w\vp_c''-2\vp_c.
    \end{equation}
    On the other hand, differentiating $-w^2\vp_c''+w\vp_c-{\frac{\vp_c^3}{2}}=0$, with respect to $w$ (and dividing by $w$) and using \eqref{n:500} to express
    $\vp_c''$,   results in
    $$
    \cL\left[\frac{d\vp_c}{dw}\right]=2\vp''-{\frac{1}{w}}\vp_c={\frac{1}{w}}\vp_c+{\frac{1}{w}}\cL\vp_c,
    $$
Taking $\cL^{-1}$ in the last identity yields
    $$
    \cL^{-1}\vp_c=w{\frac{d\vp_c}{dw}}-\vp_c.
    $$
Since   $\int_{0}^{K(\kappa)}{dn^2(y,\kappa)}dy=E(\kappa)$, we compute $\int_{0}^{T}{\vp^2}dx=\frac{16w^2}{T} E(\kappa)K(\kappa)$.
    Therefore,
     \begin{eqnarray*}
         \langle \cL^{-1}\vp_c, \vp_c\rangle &=& w\langle \vp_c, {\frac{d\vp_c}{dw}}\rangle-\langle \vp_c, \vp_c\rangle=\f{w}{2} \p_w[\|\vp_c\|^2]- \|\vp_c\|^2= \\
         &=&
        \f{w}{2} \p_w[ \frac{16w^2}{T} E(\kappa)K(\kappa)]-
        \frac{16w^2}{T} E(\kappa)K(\kappa)=\frac{8w^3}{T}}{\frac{d}{d\kappa}}[K(\kappa)E(\kappa)]{\frac{d\kappa}{dw}
          \end{eqnarray*}

To compute $\frac{d\kappa}{dw}$, note that from (\ref{k:10a}), we have  $2w\alpha=\vp_1$, whence
              \begin{equation}\label{k:19a}
                4w(2-\kappa^2)K^2(\kappa)=T^2.
              \end{equation}
              Differentiating (\ref{k:19a}) respect to $w$, we
              get
              \begin{equation}\label{k:19b}
                {\frac{d\kappa}{dw}}=-{\frac{(2-\kappa^2)K^2(\kappa)}{w{\frac{d}{d\kappa}}[(2-\kappa^2)K^2(\kappa)]}}.
              \end{equation}
Thus, using \eqref{k:19a},
\begin{eqnarray*}
 \langle \cL^{-1}\vp_c, \vp_c\rangle &=&  -\frac{8w^2}{T} \frac{d}{d\kappa}[K(\kappa)E(\kappa)] {\frac{(2-\kappa^2)K^2(\kappa)}{{\frac{d}{d\kappa}}[(2-\kappa^2)K^2(\kappa)]}}= \\
 &=& -w T\left[2 \f{ \frac{d}{d\kappa}[K(\kappa)E(\kappa)] }{{\frac{d}{d\kappa}}[(2-\kappa^2)K^2(\kappa)]} \right]
\end{eqnarray*}
Looking at the graph below, we realize that since
$\f{1}{3}=\lim_{\ka\to 0} 2 \f{ \frac{d}{d\kappa}[K(\kappa)E(\kappa)] }{{\frac{d}{d\kappa}}[(2-\kappa^2)K^2(\kappa)]}   \geq 2 \f{ \frac{d}{d\kappa}[K(\kappa)E(\kappa)] }{{\frac{d}{d\kappa}}[(2-\kappa^2)K^2(\kappa)]}$,  we have that
$$
\langle \cL^{-1}\vp_c, \vp_c\rangle\geq -\f{1}{3} wT,
$$
which establishes the claim.
\begin{figure}[h9]
\centering
\includegraphics[width=8cm,height=6cm]{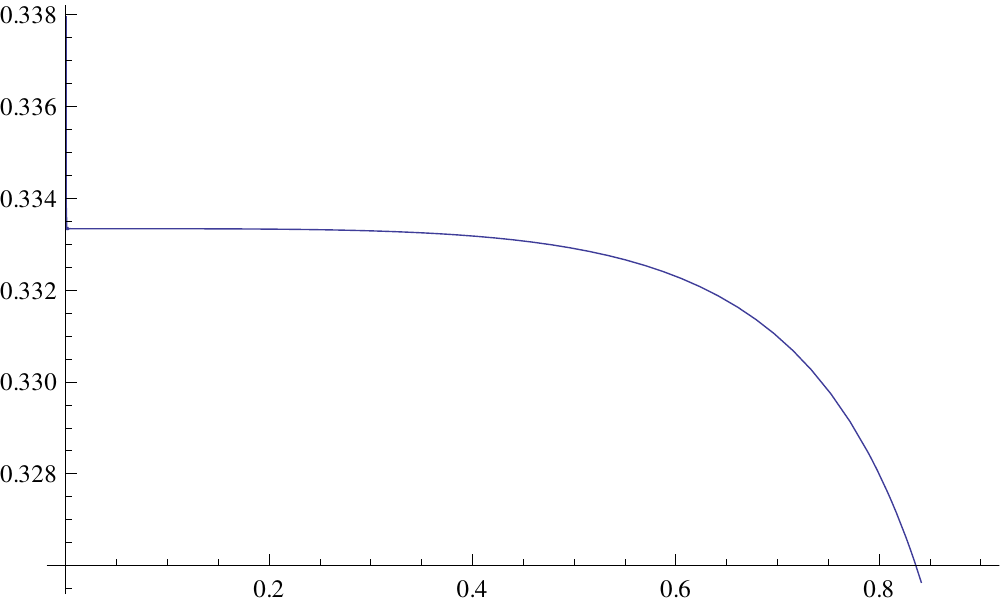}
\caption{Graph of the function
$2 \f{ \frac{d}{d\kappa}[K(\kappa)E(\kappa)] }{{\frac{d}{d\kappa}}[(2-\kappa^2)K^2(\kappa)]} $.}
\label{fig9}
\end{figure}

\end{proof}
The next thing one needs to establish, in order to apply Theorem \ref{theo:5} is that the operator $\ch$ for the KGZ system ( defined in \eqref{k:22} )
has a simple negative eigenvalue. This result should be compared with the corresponding statement in Proposition 9 in  \cite{SS1} for the whole line case.
\begin{proposition}
\label{prop:k2}
The operator $\ch$, defined in \eqref{k:22} has a simple negative eigenvalue.

\end{proposition}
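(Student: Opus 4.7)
\medskip

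\noindent\textbf{Proof plan.} Since $H_2=-w\p_x^2$ is strictly positive on $L^2_0[0,T]$, I will apply a Schur complement reduction to convert the counting of negative eigenvalues of $\ch$ into one for a scalar Schr\"odinger-type operator on $L^2[0,T]$, and then analyze the latter by a deformation argument emanating from $\cL$. Concretely, the change of variable $h=g+H_2^{-1}A^*f$ diagonalizes the quadratic form
$$
\dpr{\ch\vec{\Phi}}{\vec{\Phi}}=\dpr{\tilde{\cL}f}{f}+\dpr{H_2 h}{h},\q \vec{\Phi}=\left(\begin{array}{c}f\\ g\end{array}\right),\q \tilde{\cL}:=H_1-AH_2^{-1}A^*,
$$
so Sylvester's law of inertia reduces the problem to showing that the Schur complement $\tilde{\cL}$ has exactly one simple negative eigenvalue.

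A direct computation of $H_2^{-1}A^*$ (solving $-wu''=-(\vp_c f)_x$ with $u$ periodic forces $u'=w^{-1}(\vp_c f-T^{-1}\dpr{\vp_c}{f})$) gives
$$
\tilde{\cL}f=\cL f+\f{\dpr{\vp_c}{f}}{wT}\vp_c,\qq \cL=-w\p_x^2+1-\f{3\vp_c^2}{2w}.
$$
The operator $\cL$ was analyzed in the proof of Proposition \ref{prop:k1}: it has a single simple negative eigenvalue $\lambda_0$, kernel spanned by $\vp_c'$, and positive remaining spectrum. In addition, the explicit quantitative bound $\dpr{\cL^{-1}\vp_c}{\vp_c}>-wT/3$ was established there. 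Thus $\tilde{\cL}$ is a rank one \emph{positive} perturbation of $\cL$. Because $\dpr{\vp_c}{\vp_c'}=\tfrac{1}{2}\int_0^T(\vp_c^2)'dx=0$, the subspace $(\vp_c')^\perp$ is invariant for $\tilde{\cL}$, and on this subspace $\cL$ has eigenvalues $\lambda_0<0<\lambda_2\leq\lambda_3\leq\cdots$

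Now introduce the continuous family $\cL_s:=\cL+s(wT)^{-1}\vp_c\otimes\vp_c$, $s\in[0,1]$, restricted to $(\vp_c')^\perp$. By the min--max principle, the lowest eigenvalue $\mu_0(s)$ is continuous and non-decreasing in $s$, with $\mu_0(0)=\lambda_0<0$. If $\mu_0(s_0)=0$ for some $s_0\in(0,1]$, there would be a nonzero $f\in(\vp_c')^\perp$ with $\cL f=-s_0(wT)^{-1}\dpr{\vp_c}{f}\vp_c$. The sub-case $\dpr{\vp_c}{f}=0$ is ruled out by $\ker\cL\cap(\vp_c')^\perp=\{0\}$; otherwise, inverting $\cL$ on $(\vp_c')^\perp$ and pairing with $\vp_c$ yields $\dpr{\cL^{-1}\vp_c}{\vp_c}=-wT/s_0\leq -wT$, contradicting $\dpr{\cL^{-1}\vp_c}{\vp_c}>-wT/3$. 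Hence $\mu_0(1)<0$, while the min--max interlacing $\mu_1(1)\geq\lambda_2>0$ forces the negative eigenvalue of $\tilde{\cL}$, and therefore of $\ch$, to be unique and simple.

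The main technical obstacle is precisely this no-zero-crossing statement along the deformation path; it is exactly where the quantitative inequality $\dpr{\cL^{-1}\vp_c}{\vp_c}>-wT/3$ from Proposition \ref{prop:k1} is consumed, and the factor $3$ in the denominator gives a comfortable margin for every admissible value of $s$.
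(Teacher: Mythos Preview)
Your proof is correct and takes a genuinely different route from the paper's.

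The paper eliminates $g$ \emph{at the level of the eigenvalue equation}: writing $\ch(f,g)^T=-a^2(f,g)^T$, one solves $g=(a^2-w\p_x^2)^{-1}\p_x[\vp_c f]$ and obtains a one-parameter family
\[
M_a=-w\p_x^2+(1+a^2)-\tfrac{3\vp_c^2}{2w}+\tfrac{a^2}{w}\,\vp_c(a^2-w\p_x^2)^{-1}(\vp_c\,\cdot),
\]
then shows the operator inequality $M_a\geq M_b+(a^2-b^2)Id$ for $a\geq b$. Since $M_0=\cL$ has lowest eigenvalue $-\de^2<0$ while $M_a>0$ for $a>\de$, the lowest eigenvalue $\la(a)$ vanishes at a unique $a_0$; the Courant max--min then gives $\la_1(M_{a_0})\geq a_0^2>0$, forcing a single simple negative eigenvalue $-a_0^2$ for $\ch$.

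You instead eliminate $g$ \emph{at the level of the quadratic form} via the Schur complement with respect to the positive block $H_2$, which collapses the problem to the fixed operator $\tilde{\cL}=\cL+(wT)^{-1}\vp_c\otimes\vp_c$. This is structurally cleaner: the reduced operator is visibly a rank-one positive perturbation of $\cL$, so Weyl interlacing on $(\vp_c')^\perp$ immediately gives $\mu_1\geq\la_2>0$, and only the location of $\mu_0$ needs work. Your deformation $s\mapsto\cL_s$ then recycles the quantitative bound $\dpr{\cL^{-1}\vp_c}{\vp_c}>-wT/3$ already obtained in the proof of Proposition~\ref{prop:k1} (in fact you only need the weaker $>-wT$, exactly the threshold used there). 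By contrast, the paper's argument for Proposition~\ref{prop:k2} is independent of that bound: it relies solely on the spectral picture of $\cL$ and the monotonicity of $M_a$. So the paper's route is more self-contained, while yours is shorter, exposes the rank-one structure explicitly, and explains why the inequality proved in Proposition~\ref{prop:k1} is precisely what controls the negative spectrum of $\ch$.
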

\begin{proof}
Consider the eigenvalue problem in the form
$$
\ch\left(\begin{array}{c} f \\ g \end{array}\right)=-a^2 \left(\begin{array}{c} f \\ g \end{array}\right).
$$
for some $a\in (0,\infty)$. As in Proposition \ref{prop:k1}, this can be rewritten as\footnote{Note that the second equation requires $\int_0^T g(x) dx=0$.
This means in particular  that the statement for existence of negative eigenvalue  is invalid, unless the second component of the Hilbert space is $L^2_0[0,T]$}
\begin{equation}
\label{k:100}
\left\{
\begin{array}{l}
-w f''+f-\f{\vp^2}{2w} f+\vp g'=-a^2 f \\
-(\vp f)'-w g''=-a^2 g.
\end{array}\right.
\end{equation}
Form the second equation, we may resolve for $g$
\begin{equation}
\label{k:110}
g=  (a^2-w \p_x^2)^{-1}\p_x [\vp_c f].
\end{equation}
This last formula requires a bit of justification, but basically since $\p_x[\vp f]$ is guranteed to have mean value zero, it suffices to define
$(a^2-w \p_x^2)^{-1}$ (where $a^2>0, w>0$) on $L^2[0,T]$ by
$$
(a^2-w \p_x^2)^{-1}[\sum_{n=-\infty}^\infty  a_n e^{i n x}]:=
\sum_{n=-\infty}^\infty
\f{a_n}{a^2+4\pi^2 w \f{n^2}{T^2}} e^{2\pi i n \f{x}{T}},
$$
whence the formula for $g$ in \eqref{k:110} makes sense. In fact,  $L^2_0[0,T]$ is invariant under the action of  and $(a^2-w \p_x^2)^{-1}$ and
hence $g\in L^2[0,T]$. In fact, we use \eqref{k:110} to deduce the following formula for $g'$
$$
g'=\p_x^2 (a^2-w \p_x^2)^{-1} [\vp_c f]= -\f{\vp f}{w} +\f{a^2}{w} (a^2-w \p_x^2)^{-1} [\vp_c f].
$$
We plug this in the first equation of \eqref{k:100} to obtain the following equation for $f$
\begin{equation}
\label{k:120}
-w f''+(1+a^2) f-\f{3\vp_c^2}{w} f+\f{a^2}{w}[\vp_c (a^2-w \p_x^2)^{-1} \vp_c f] =0.
\end{equation}
Introduce a one-parameter family of self-adjoint operators
$$
M_a:=-w\p_x^2+(1+a^2) - \f{3\vp^2}{2w}+\f{a^2}{w}[\vp_c (a^2-w \p_x^2)^{-1} (\vp_c\cdot)].
$$
In order to finish the proof of Proposition \ref{prop:k2}, we need to establish that there is unique $a_0>0$, so that the operator $M_{a_0}$
has an eigenvalue zero and such an eigenvalue is simple. We shall first show that there exists $a_0>0$, so that $M_{a_0}$ has an eigenvalue at
zero and then we show that this eigenvalue zero is simple for $M_{a_0}$.

To that end,  we will establish the following \\
{\bf Claim:} For $a\geq b\geq 0$, we have $M_a\geq M_b+(a^2-b^2)Id\geq M_b$.

Assuming the validity of the Claim, let us finish the proof of Proposition
\ref{prop:k2}.   Define $\la(a)$ to be the minimal eigenvalue for $M_a$, that is
$$
\la(a):=\inf\{\la: \la \in \si(M_a)\}=\inf_{\|f\|=1} \dpr{M_a f}{f}.
$$
Clearly, the function $a\to \la(a)$ is continuous (in fact, more generally,  $a\to M_a$ is continuous as a function from $\rone \to B(L^2[0,T])$)).
Moreover, as a consequence of the Claim, $\la(a)$ is a strictly increasing function of its argument. In order to show the existence of $a_0$,
it will suffice to show that $a\to \la(a)$ changes sign in
$[0, \infty)$ (and hence vanishes at some $a_0>0$).  But at $a=0$, we have
$$
M_0=-w \p_x^2+1-\f{3\vp^2}{2 w}=\cL,
$$
that was considered before. Since we have checked that $\cL$ has a (simple) negative eigenvalue, say $-\de^2$ it follows that $\la(0)<0$.
On the other hand, by the claim
$$
M_a\geq M_0+a^2 Id=\cL+a^2 Id.
$$
In particular, for every $a>\de$, we have $M_a>(a^2-\de^2) Id$, whence $\la(a)\geq a^2-\de^2>0$. Thus, for any $b>\de$, the function $\la(a)$
changes sign (exactly once) in the interval $(0,b)$. We have shown that there is $a_0: \la(a_0)=0$.

Now, we have to show the second part of the proposition, namely that $0$ is an isolated eigenvalue for $M_{a_0}$. Let $\phi_0$ be the
eigenvector for the simple negative eigenvalue for $M_0=\cL$. Note that by  Proposition \ref{prop:1}, the second eigenvalue of $\cL$ is zero,
which means that $M_0|_{{\phi_0}^\perp}=\cL|_{{\phi_0}^\perp}\geq 0$.
 In addition, by the Claim, we have that $M_{a_0}\geq M_0+a_0^2 Id=\cL+a_0^2 Id$, and thus, by the Courant maxmin principle for the second eigenvalue
 $$
 \la_1(M_{a_0})=\sup_{z\neq 0} \inf_{u\perp z: \|u\|=1} \dpr{M_{a_0} u}{u}\geq
 a_0^2 + \inf_{u\perp \phi_0: \|u\|=1} \dpr{\cL u}{u}\geq a_0^2>0.
 $$
 It follows that $\la_0(M_{a_0})=0$, while $\la_1(M_{a_0})>0$, which shows the simplicity of the zero eigenvalue for $M_{a_0}$. It now remains to establish the Claim.
 \begin{proof}
 By the form of the operators $M_a$, it suffices to show for all trig polynomials  $f\in L^2[0,T]$ that for $a\geq b$,
 $$
 a^2\dpr{\vp_c (a^2-w \p_x^2)^{-1} [\vp_c f]}{f}\geq b^2\dpr{\vp_c (b^2-w \p_x^2)^{-1} [\vp_c f]}{f}
 $$
 But letting $a_n$ be the Fourier coefficients of $\vp_cf$, that is
 $\vp_c f=\sum_n a_n \f{e^{2\pi i n \f{x}{T}}}{\sqrt{T}}$  or \\
 $a_n=\f{1}{\sqrt{T}} \int_0^T \vp_c(x) f(x)  e^{-2\pi i n \f{x}{T}} dx$,
 we see that
 \begin{eqnarray*}
  b^2\dpr{\vp_c (b^2-w \p_x^2)^{-1} [\vp_c f]}{f} &=&
 \sum_n \f{b^2  }{b^2+4\pi^2 w \f{n^2}{T^2}}|a_n|^2\leq \\
 & \leq &   \sum_n \f{a^2  }{a^2+4\pi^2 w \f{n^2}{T^2}}|a_n|^2= a^2\dpr{\vp_c (a^2-w \p_x^2)^{-1} [\vp_c f]}{f},
 \end{eqnarray*}
where we have used that  $\f{b^2  }{b^2+4\pi^2 w \f{n^2}{T^2}}\leq \f{a^2  }{a^2+4\pi^2 w \f{n^2}{T^2}}$, whenever $w>0$, $a\geq b\geq 0$.
 \end{proof}

 \end{proof}

\section{Linear stability for Boussinesq: proof of  Theorem \ref{theo:1} and Theorem \ref{theo:2}}
\label{sec:133}
Now that we have the solutions, we need to check that the operator $H_c$ satisfies the requirements in Theorem \ref{theo:5}, after which,
we need to compute the index $\mu^*(H_c)$.
We collect the needed results in the following propositions.
\begin{proposition}(Spectral properties of $H_c$)
\label{prop:1}
The operator $\ch_c$, introduced in \eqref{z:2},
satisfies \eqref{A}, \eqref{E}, \eqref{B} for $p=2,3$.
\end{proposition}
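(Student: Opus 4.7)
The plan is to verify the three structural hypotheses \eqref{A}, \eqref{E}, \eqref{B} one at a time, reducing the analysis of $\ch_c = -\p_x \cL \p_x$ on $L^2_0[0,T]$ to the already-understood spectral theory of $\cL$. The reality condition \eqref{B} is immediate, since $\cL$ has real coefficients and conjugation commutes with $\p_x$. For \eqref{E}, I will use that $\ch_c$ is a self-adjoint fourth-order elliptic operator on a compact interval with periodic boundary conditions, so that for $\tau$ large enough $(\ch_c+\tau)^{-1/2}$ boundedly maps $L^2_0$ into $H^2\cap L^2_0$; composing with $\p_x: H^2 \to H^1$ then makes both operators in \eqref{E} bounded on $L^2_0$ uniformly in $\tau \gg 1$. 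This is a standard elliptic-regularity argument which, apart from bookkeeping with the mean-value-zero constraint, presents no genuine difficulty.

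The substantive content is \eqref{A}. The simplicity of the zero eigenvalue and the explicit form $\psi_0 = \vp_c - T^{-1} \int_0^T \vp_c$ are already supplied by Corollary \ref{cor:p}. Discreteness of $\si(\ch_c)$, and hence the existence of a positive spectral gap above zero, will follow for free from compactness of the resolvent of the fourth-order operator $\ch_c$ on $[0,T]$. The main step is to show that $\ch_c$ has exactly one simple negative eigenvalue. For this I will exploit the quadratic form identity
\begin{equation*}
\dpr{\ch_c f}{f}_{L^2_0} = \dpr{\cL f'}{f'}_{L^2}, \qquad f \in H^2 \cap L^2_0,
\end{equation*}
obtained by integration by parts, together with the observation that $f \mapsto f'$ is a linear bijection between $H^2 \cap L^2_0$ and $H^1 \cap \{1\}^\perp$. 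By the min-max principle this matches the Morse index of $\ch_c$ on $L^2_0$ with that of $\cL$ restricted to $\{1\}^\perp$.

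By Proposition \ref{prop:3} in the $p=3$ case, and by the parallel cnoidal Lam\'e analysis for $p=2$ reviewed in Section \ref{sec:2}, the operator $\cL$ has exactly one simple negative eigenvalue and a one-dimensional kernel spanned by $\vp_c'$, with the rest of its spectrum strictly positive and bounded away from zero. The standard rank-one (Weinstein--Aronszajn) index formula then gives
\begin{equation*}
n_-(\cL|_{\{1\}^\perp}) = n_-(\cL) - \mathbf{1}_{\{\dpr{\cL^{-1} 1}{1} < 0\}},
\end{equation*}
so a simple negative eigenvalue survives the restriction precisely when $\dpr{\cL^{-1} 1}{1} > 0$. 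This positivity is exactly what Lemmas \ref{l1} and \ref{l2} establish, via explicit elliptic-function computations and the numerical positivity depicted in Figures \ref{fig7} and \ref{fig8}. The main obstacle is therefore really the positivity of $\dpr{\cL^{-1}1}{1}$, which is dispatched by those two technical lemmas; granting them, the remaining argument is purely structural.
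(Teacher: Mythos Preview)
Your argument is correct and in fact more self-contained than what the paper does. For the simple negative eigenvalue of $\ch_c$, the paper does not give an intrinsic proof: it simply cites Section~4.1 of \cite{HSS}, where the inequality $\la_0(\ch_c)<\la_1(\ch_c)=0$ was already needed and verified for the same operators in the KP/mKP context. Your route is genuinely different: you use the form identity $\dpr{\ch_c f}{f}=\dpr{\cL f'}{f'}$ and the bijection $f\mapsto f'$ between $H^2\cap L^2_0$ and $H^1\cap\{1\}^\perp$ to identify the Morse index of $\ch_c$ on $L^2_0$ with that of the compression $\cL|_{\{1\}^\perp}$, and then the rank-one index formula reduces everything to the sign of $\dpr{\cL^{-1}1}{1}$. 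The advantage of your approach is that it stays entirely within the paper: Lemmas~\ref{l1} and \ref{l2} already establish $\dpr{\cL^{-1}1}{1}>0$ for $p=2,3$, so no appeal to \cite{HSS} is needed. The paper's approach, on the other hand, has the virtue of delivering the sharper ordering $\la_0(\ch_c)<\la_1(\ch_c)=0$ directly, whereas your argument assembles the same conclusion from the separate pieces $n_-(\ch_c)=1$ and $\dim\ker\ch_c=1$ (the latter via Corollary~\ref{cor:p}). Both arguments ultimately hinge on the positivity of $\dpr{\cL^{-1}1}{1}$, which in the paper is used only for the simplicity of the kernel, while in your argument it does double duty.
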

Our next result gives a precise formula for the index $\mu^*(\ch_c)$.
\begin{proposition}
\label{prop:2} We have

1)for $p=2$
$$\mu^*(\ch_c)={\frac{\sqrt{w}}{2}}\sqrt{\widetilde{F}(\kappa)},
$$
where $\widetilde{F}(\kappa)$ is defined in \eqref{i:1} below. 

2) for  $p=3.$
$$
\mu^*(\ch_c)=\f{ \sqrt{w}}{2}
\sqrt{\f{\left[4E(\ka)-\f{\pi^2}{K(\ka)}\right][(2-\ka^2)
E(\ka)-2(1-\ka^2)K(\ka)]}{(2-\ka^2)(E^2(\ka)-(1-\ka^2) K(\ka))}}.
$$
\end{proposition}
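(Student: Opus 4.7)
The plan is to apply Theorem \ref{theo:5} in tandem with Corollary \ref{cor:p}. Set $\bar\vp_c:=\f{1}{T}\int_0^T\vp_c\,dx$; Corollary \ref{cor:p} identifies the unnormalized zero-mode of $\ch_c$ as $\vp_c-\bar\vp_c$, so upon dividing by $\|\vp_c-\bar\vp_c\|$ the index becomes
\begin{equation*}
\mu^*(\ch_c)=\f{\sqrt{\|\vp_c\|^2-T\bar\vp_c^2}}{2\sqrt{-\dpr{\ch_c^{-1}\vp_c'}{\vp_c'}}}.
\end{equation*}
To compute $\dpr{\ch_c^{-1}\vp_c'}{\vp_c'}$, I would solve $\ch_c u=-\p_x\cL\p_x u=\vp_c'$ in $L^2_0[0,T]$ by integrating once to obtain $\cL[u']=-\vp_c+a$ with a unique constant $a$ determined by the periodicity requirement $\int_0^T u'\,dx=0$. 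The equation is solvable because $Ker(\cL)=\mathrm{span}\{\vp_c'\}$ is orthogonal to both $1$ and $\vp_c$, and by Lemmas \ref{l1} and \ref{l2} the value $a=\dpr{\cL^{-1}\vp_c}{1}/\dpr{\cL^{-1}1}{1}$ is well defined. Integration by parts then produces the master identity
\begin{equation*}
\dpr{\ch_c^{-1}\vp_c'}{\vp_c'}=\dpr{\cL^{-1}\vp_c}{\vp_c}-\f{\dpr{\cL^{-1}\vp_c}{1}^2}{\dpr{\cL^{-1}1}{1}}.
\end{equation*}

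To evaluate the two remaining $\cL^{-1}$-scalar products I would use the scaling identity obtained by differentiating the profile equation $-\vp_c''+w\vp_c-f(\vp_c)=0$ (with $w=1-c^2$) in $w$ at fixed period $T$, giving $\cL[\p_w\vp_c]=-\vp_c$ and hence $\cL^{-1}\vp_c=-\p_w\vp_c$. This turns both quantities into derivatives of elliptic integrals: $\dpr{\cL^{-1}\vp_c}{1}=-\p_w\int_0^T\vp_c\,dx$ and $\dpr{\cL^{-1}\vp_c}{\vp_c}=-\f{1}{2}\p_w\|\vp_c\|^2$. For $p=3$ a dramatic simplification occurs: from $\vp_c=\vp_1 dn(\al x;\ka)$ with $\al=\vp_1/\sqrt{2}$ and $\int_0^{2K}dn\,dy=\pi$ one finds $\int_0^T\vp_c\,dx=\sqrt{2}\pi$, \emph{independent of $w$}, so $\dpr{\cL^{-1}\vp_c}{1}=0$ and the cross term disappears. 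Eliminating $\al$ via $T=2K/\al$ gives $\|\vp_c\|^2=8KE/T$ and $\|\vp_c-\bar\vp_c\|^2=(2K/T)(4E-\pi^2/K)$; the chain rule applied to the period constraint $T^2w=4K^2(2-\ka^2)$ together with the Legendre relations $E'=(E-K)/\ka$, $K'=(E-(1-\ka^2)K)/(\ka(1-\ka^2))$ then collapses the answer into the stated formula. The $p=2$ case follows the same blueprint, but now all three scalar products contribute. Here the quadratic nonlinearity supplies the extra identity $\cL[1]=w-\vp_c$, whence $\cL^{-1}\vp_c=w\cL^{-1}1-1$, which reduces everything to $\dpr{\cL^{-1}1}{1}$ (already supplied by \eqref{kdv8}), $\int_0^T\vp_c\,dx$ (already supplied by \eqref{kdv7b}), and $\int_0^T\vp_c^2\,dx$ (an explicit integral of $cn^2$ and $cn^4$).

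The master identity is the conceptual core of the argument; notably, its denominator $\dpr{\cL^{-1}1}{1}$ is the very quantity whose nonvanishing already guarantees simplicity of the zero eigenvalue of $\ch_c$ in Corollary \ref{cor:p}. Once this identity is in place, both cases reduce to an explicit (if labor-intensive) computation. I expect the $p=2$ case to be the main obstacle: three distinct $\ka$-dependent pieces, built from $F(\ka)$, $G(\ka)$ and the factor $16\sqrt{1-\ka^2+\ka^4}K^2F'G$ hidden in $\dpr{\cL^{-1}1}{1}$, must be organized so that the final answer telescopes into the compact rational expression $\widetilde F(\ka)$ advertised in Theorem \ref{theo:2}. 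The $p=3$ case is unusually clean in comparison, solely because the fortuitous $w$-invariance of $\int_0^T\vp_c\,dx$ annihilates the cross term at the outset.
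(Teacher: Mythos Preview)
Your proposal is correct and follows essentially the same route as the paper: derive the master identity $\dpr{\ch_c^{-1}\vp_c'}{\vp_c'}=\dpr{\cL^{-1}\vp_c}{\vp_c}-\dpr{\cL^{-1}\vp_c}{1}^2/\dpr{\cL^{-1}1}{1}$, use the scaling relation $\cL^{-1}\vp_c=-\p_w\vp_c$ (the paper writes it equivalently as $\frac{1}{2c}\frac{d\vp_c}{dc}$), observe for $p=3$ that $\int_0^T\vp_c\,dx=\sqrt{2}\pi$ kills the cross term, and reduce everything to explicit elliptic integrals via the period constraint. One minor shortcut the paper exploits for $p=2$: integrating the profile equation over $[0,T]$ gives $\int_0^T\vp_c^2\,dx=2w\int_0^T\vp_c\,dx$ directly, so you never need the $cn^4$ integral.
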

{\bf Note:} The function under the square root is positive for all values of $\ka: 0<\ka<1$.

We now finish the proof of Theorem \ref{theo:1}, based on the results of Propositions \ref{prop:1} and \ref{prop:2}.

Let $p=3$. We apply Theorem \ref{theo:5}, whence we get stability, provided $|c|\geq \mu^*(\ch)$. Thus, we need to resolve the inequality
$$
|c|\geq  \f{\sqrt{1-c^2}}{2}
\sqrt{M(\ka)},
$$
where we have taken
$$
M(\ka):=\f{\left[4E(\ka)-\f{\pi^2}{K(\ka)}\right][(2-\ka^2) E(\ka)-2(1-\ka^2)K(\ka)]}{(2-\ka^2)(E^2(\ka)-(1-\ka^2) K(\ka))},
$$
we obtain for the interval of the stable speeds
$$
|c|\geq \sqrt{\f{M(\ka)}{4+M(\ka)}},
$$
as stated in Theorem \ref{theo:1}.

Similarly, for $p=2$, we have according to Proposition \ref{prop:2} that
$$
|c|\geq  \f{\sqrt{1-c^2}}{2}
\sqrt{\tilde{F}(\ka)},
$$
whence we conclude similarly that $$
|c|\geq \sqrt{\f{\tilde{F}(\ka)}{4+\tilde{F}(\ka)}},
$$
is a necessary and sufficient condition for stability of the corresponding traveling wave.
\subsection{Proof of Proposition \ref{prop:1}}
We note that standard arguments imply the validity of \eqref{E}, since $\ch$ is fourth order operator. The reality condition \eqref{B} is also trivially satisfied as all our potentials are real-valued. The hard to check condition is \eqref{A}.
 We need to check that the operator $\ch$ has a simple eigenvalue at zero. This was indeed the conclusion of Corollary \ref{cor:p}.

 Thus, it remains to show that the operator $\ch$ defined in \eqref{z:2} has a simple negative eigenvalue and Proposition
 \ref{prop:1} will follow. This is a non-trivial fact. Interestingly enough, this   was needed (and proved in our paper \cite{HSS}), when we have considered  the transverse instability of the same spatially periodic waves in the Kadomtsev-Petviashvili (KP) and modified  KP models, that is exactly for the operators considered here, corresponding to the cases $f(u)=\f{u^2}{2}$ and $f(u)=u^3$.  More precisely, we have derived a necessary condition, so that the first two  eigenvalues of $\ch=-\p_x\cL\p_x$ satisfy
 $$
\la_0( \ch)< \la_1(\ch) =0.
 $$
 which was verified for both $p=2,3$. This is exactly what is needed here. The interested reader may consult Section 4.1 in \cite{HSS} for full and complete proof of these facts.

\subsection{Proof of Proposition \ref{prop:2}}

\subsubsection{The case $p=2$}
We compute the index of stability $\mu^*(\ch)$. We have
  \begin{equation}\label{kdv9}
    \langle \ch^{-1}\psi_0', \psi_0'\rangle ={\frac{1}{\|\varphi_c+A\|^2}} \langle \ch^{-1}\varphi'_c, \varphi'_c\rangle
    ,
    \end{equation}
    where $A=-{\frac{1}{T}}\int_{0}^{T}{\varphi_c}dx.$ Let $f:
    \ch[f]=\varphi'_c$. It follows that $-\cL f'=\varphi_c+b$, for
    some constant $b$. Hence
      \begin{equation}\label{kdv9a}
        -f'=\cL^{-1}\varphi_c + b\cL^{-1}1.
      \end{equation}
      Thus
      \begin{equation}\label{kdv9b}
        \langle \ch^{-1}\psi_0',
        \psi_0'\rangle={\frac{1}{\|\varphi_c+A\|^2}}\langle f,
        \varphi'_c\rangle={\frac{1}{||\varphi_c+A||^2}}\langle \cL^{-1}\varphi_c,
        \varphi_c\rangle +b\langle \cL^{-1}1, \varphi_c\rangle .
      \end{equation}
   From (\ref{kdv9a}), we have that $0=-\dpr{f'}{1}=\dpr{\cL^{-1}\varphi_c}{1}+b\dpr{\cL^{-1}1}{1}$, whence
     \begin{equation}\label{kdv9c}
       b=-\frac{\langle \cL^{-1}\varphi_c, 1\rangle}{\langle
       \cL^{-1}1,1\rangle }
     \end{equation}
Combining (\ref{kdv5}), (\ref{kdv9}), and  (\ref{kdv9c}) yields
       \begin{eqnarray*}
        \langle \ch^{-1}\psi_0', \psi_0'\rangle
        &= & {\frac{1}{||\varphi_c+A||^2}}\left( \langle \cL^{-1}\varphi_c,
        \varphi_c \rangle -{\frac{\langle \cL^{-1}\varphi_c, 1 \rangle\langle \cL^{-1}1,
        \varphi_c\rangle}{\langle
       \cL^{-1}1,1\rangle }}\right) \\
        \\
        &=& {\frac{1}{||\varphi_c+A||^2}}\left( {\frac{1}{4c}}{\frac{d}{dc}}\langle \varphi_c, \varphi_c\rangle-{\frac{\langle \cL^{-1}\varphi_c, 1\rangle \langle \cL^{-1}1,
        \varphi_c\rangle}{\langle
       \cL^{-1}1,1\rangle }}\right).
       \end{eqnarray*}
  From (\ref{kdv1}) after integrating
  $$
  \int_0^T \vp_c^2 dx=2w\int_{0}^{T}{\vp_c}dx={\frac{2w}{T}}F(\kappa),
  $$
  we use \eqref{kdv7a} to see
  \begin{equation}
  \label{kdv10}
  \frac{1}{4c}\frac{d}{dc}\langle \varphi_c,
    \varphi_c\rangle = {\frac{1}{T}}\left[
    -F(\kappa)-256K^4(\kappa)F'(\kappa)G(\kappa)(1-\kappa^2+\kappa^4)\right].
    \end{equation}
Next,  we use \eqref{kdv5}, \eqref{kdv7b}, \eqref{kdv7a} and \eqref{n:10} to compute
    \begin{eqnarray*}
      \langle \cL^{-1}1, \varphi_c\rangle &=& \langle
      \cL^{-1}\varphi_c,1\rangle= \f{1}{2c}(\p_c\int_0^T \vp_c dx)= -wT^3F'(\kappa)G(\kappa)= \\
      &=& {-\frac{256}{wT}}[ K^4(\kappa)F'(\kappa)G(\kappa)(1-\kappa^2+\kappa^4)]
      \end{eqnarray*}
Finally,  using the formulas for $\int_0^T \vp_c^2 dx, \int_0^T \vp_c dx$ allows us to find
      $$
 ||\varphi_c+A||^2 = {\frac{w\left[
       2F(\kappa)-{\frac{F^2(\kappa)}{16\sqrt{1-\kappa^2+\kappa^4}K^2(\kappa)}}\right]}{T}}
    $$
 Putting all formulas together
     \begin{eqnarray*}
      & &  \langle \ch^{-1}\psi', \psi'\rangle={\frac{1}{w\left[
       2F(\kappa)-{\frac{F^2(\kappa)}{16\sqrt{1-\kappa^2+\kappa^4}K^2(\kappa)}}\right]}} \times    \\
       \\
     & & \times \left[-F(\kappa)-256K^4(\kappa)F'(\kappa)G(\kappa)(1-\kappa^2+\kappa^4)-\f{4096 K^6(\ka)(1-\ka^2+\ka^4)^{3/2}(F'(\ka) G(\ka))^2}{1-16\sqrt{1-\kappa^2+\kappa^4}K^2(\kappa)F'(\kappa)G(\kappa)}\right]
     \end{eqnarray*}
     Thus, if we assign the function
     \begin{equation}
     \label{i:1}
     \tilde{F}(\kappa):=\f{\left[
       2F(\kappa)-{\frac{F^2(\kappa)}{16\sqrt{1-\kappa^2+\kappa^4}K^2(\kappa)}}\right] }{ F(\kappa)+256K^4(\kappa)F'(\kappa)G(\kappa)(1-\kappa^2+\kappa^4)+\f{4096 K^6(\ka)(1-\ka^2+\ka^4)^{3/2}(F'(\ka) G(\ka))^2}{1-16\sqrt{1-\kappa^2+\kappa^4}K^2(\kappa)F'(\kappa)G(\kappa)}}.
\end{equation}
 we get
     $\langle \ch^{-1}\psi', \psi'\rangle=-\frac{1}{w\tilde{F}(\kappa)} $. Thus, the index formula holds as stated in Proposition \ref{prop:2}, namely
     $$
     \mu^*(\ch)=\f{w}{2}\sqrt{\tilde{F}(\ka)}.
     $$
\subsubsection{The case $p=3$}
In this section,  we compute the index of stability.  For that we need to consider first
$$
\dpr{\ch^{-1} \psi_0'}{\psi_0'}=\frac{1}{\|\varphi_c+A\|^2}\dpr{\ch^{-1} \vp_c'}{\vp_c'},
$$
where $A=-{\frac{1}{T}}\int_{0}^{T}{\varphi_c}dx=-{\frac{\alpha\sqrt{2}\pi}{2K(\kappa)}}$. Thus, we need to compute $\ch^{-1}[\vp_c']$.
Let $f: \ch[f]=\vp_c'$. It follows that
$
-\cL f'=\vp_c+b,
$
for some constant $b$.  We conclude that
$$
-f'=\cL^{-1} \vp_c+b \cL^{-1} 1.
$$
Note that $\cL^{-1} 1$ is well-defined, since $1\perp \vp_c'$, which spans $Ker(\cL)$.
Thus
\begin{equation}
\label{350}
\dpr{\ch^{-1} \psi_0'}{\psi_0'}=\frac{1}{\|\varphi_c+A\|^2} \dpr{f}{\vp_c'}=
\frac{1}{\|\varphi_c+A\|^2} \dpr{-f'}{\vp_c}=
\frac{\dpr{\cL^{-1}\vp_c}{\vp_c}+b\dpr{\cL^{-1} 1}{\vp_c}}{\|\varphi_c+A\|^2}
\end{equation}
  Differentiating  \eqref{a:1010} (with $a=0$)  with respect to $c$ yields
    \begin{equation}\label{mkdv5}
        \mathcal{L}^{-1}\varphi_c={\frac{1}{2c}}{\frac{d\varphi_c}{dc}}
    \end{equation}
From \eqref{mkdv2}, we get $\int_{0}^{T}{\vp_c^2}dx={\frac{8K(\kappa)E(\kappa)}{T}}$ and
     \begin{equation}
     \label{mkdv6}
       \langle \mathcal{L}^{-1}\varphi_c, \varphi_c\rangle ={\frac{1}{2c}}\langle \frac{d\varphi_c}{dc},
       \varphi_c\rangle={\frac{1}{4c}}\p_c[\int_{0}^{T}{\varphi_c^2}dx] \\
       =-{\frac{4}{T}}{\frac{E^2(\kappa)-(1-\kappa^2)K^2(\kappa)}{\kappa
       (1-\kappa^2)}}{\frac{d\kappa}{dw}}.
     \end{equation}
     In addition, note that
     $$
             \dpr{\cL^{-1} 1}{\vp_c}= \langle \mathcal{L}^{-1}\varphi_c, 1\rangle
              ={\frac{1}{2c}}
              \p_c[\int_{0}^{T}{\varphi_c(x;\kappa)}dx]=0.
         $$
         and hence
         $$
         \dpr{\ch^{-1} \psi_0'}{\psi_0'}=\frac{\dpr{\cL^{-1}\vp_c}{\vp_c}}{\|\varphi_c+A\|^2}.
         $$
     From the relations \eqref{mkdv2}, we have
       $$
       w=\frac{2-\kappa^2}{2}\varphi_1^2=\frac{4 K^2(\ka)(2-\kappa^2)}{T^2},
       $$
       which after differentiating with respect to $w$ allows us to express
       $$
       {\frac{d\kappa}{dw}}={\frac{T^2}{8}}{\frac{1}{(2-\kappa^2)K(\kappa){\frac{dK(\kappa)}{d\kappa}}-\kappa
       K^2(\kappa)}}.
       $$
       Thus
         $$
         \dpr{\mathcal{L}^{-1}\varphi_c}{\varphi_c}
         =-\frac{T}{2}\frac{(E^2(\kappa)-(1-\kappa^2)K^2(\kappa))}{\kappa
       (1-\kappa^2)} \frac{1}{(2-\kappa^2)K(\kappa){\frac{dK(\kappa)}{d\kappa}}-\kappa
       K^2(\kappa)}.
       $$
       Using that
       ${\frac{dK(\kappa)}{d\kappa}}={\frac{E(\kappa)-(1-\kappa^2)K(\kappa)}{\kappa(1-\kappa^2)}}$,
       we obtain
         \begin{equation}\label{mkdv7}
         \langle \mathcal{L}^{-1}\varphi_c,
         \varphi_c\rangle=-{\frac{1}{\alpha}}{\frac{E^2(\kappa)-(1-\kappa^2)K^2(\kappa)}{(2-\kappa^2)E(\kappa)-2(1-\kappa^2)K(\kappa)}}=-{\frac{1}{\alpha}}B(\kappa).
         \end{equation}
       Since  $\int_{0}^{2K(\kappa)}{dn(y;\kappa)}dy=\pi$, we
       get
       \begin{equation}\label{mkdv8}
         ||\varphi_c+A||^2=\alpha \left(
         4E(\kappa)-{\frac{\pi^2}{K(\kappa)}}\right)=\alpha
         C(\kappa)
         \end{equation}

         From the above relations and \eqref{350}, we get
         \begin{equation}\label{mkdv10}
           \langle \ch^{-1}\psi_0',
           \psi_0'\rangle=-{\frac{2-\kappa^2}{wC(\kappa)}}
           B(\kappa)
         \end{equation}

Thus,
\begin{eqnarray*}
\mu^*(\ch) &=& \f{1}{2\sqrt{-\dpr{ \ch^{-1}\psi_0'}{\psi_0'}}}=
\f{\sqrt{w}}{2} \sqrt{\f{C(\ka)}{(2-\ka^2)B(\ka)}}= \\
&=& \f{\sqrt{w}}{2}\sqrt{\f{\left[4E(\ka)-\f{\pi^2}{K(\ka)}\right][(2-\ka^2) E(\ka)-2(1-\ka^2)K(\ka)]}{(2-\ka^2)(E^2(\ka)-(1-\ka^2) K(\ka))}},
\end{eqnarray*}
which  is exactly the claim of Proposition \ref{prop:2}.

\section{Linear stability of KGZ: proof of Theorem \ref{theo:10}}
\label{sec:5}
We have already checked the conditions on the operator $\ch$, defined in \eqref{k:22} in Section \ref{sec:3.3}. Namely, we have established the simplicity of the eigenvalue at zero in Proposition \ref{prop:k1} and then, we have verified the existence and simplicity of a single negative eigenvalue.  It now remains to compute the index $\mu^*(\ch)$,
after which, we obtain a characterization of the linear stability by Theorem \ref{theo:5}, namely $|c|\geq \mu^*(\ch)$.

\begin{proposition}
\label{prop:k15}
For $\ka \in (0,\ka_0), \ka_0=0.937095\ldots$, $\dpr{\ch^{-1}\psi_0'}{\psi_0'}<0$. For $\ka\in (\ka_0,1)$, \\ $\dpr{\ch^{-1}\psi_0'}{\psi_0'}>0$, and
$$
\mu^*(\ch)=\f{\sqrt{w}}{2 \sqrt{N(\ka)}},
$$
where $N$ is defined in \eqref{z001}. Note that,  $N(\ka)>0,\ka\in (\ka_0,1)$.
\end{proposition}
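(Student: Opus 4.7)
The plan is to solve $\ch\vec F=\psi_0'$ explicitly (modulo $\mathrm{Ker}(\ch)=\mathrm{span}\{\psi_0\}$ from Proposition \ref{prop:k1}) and evaluate $\dpr{\ch^{-1}\psi_0'}{\psi_0'}=\dpr{\vec F}{\psi_0'}$ in closed form in terms of $K(\ka),E(\ka)$. First I would observe that $\psi_0'=(\vp_c'',-\vp_c\vp_c'/w)^T$ is orthogonal to $\psi_0$ in $L^2[0,T]\times L^2_0[0,T]$ (both cross terms vanish as exact derivatives of periodic functions), so the pairing is well-defined. Writing $\vec F=(f,g)^T$, the equation unpacks to
\begin{eqnarray*}
-wf''+f-\f{\vp_c^2}{2w}f+\vp_c g'&=&\vp_c'',\\
-(\vp_c f)'-wg''&=&-\f{\vp_c\vp_c'}{w}.
\end{eqnarray*}
Integrating the second equation once yields $g'=-\f{\vp_c f}{w}+\f{\vp_c^2}{2w^2}-\f{C}{w}$, and periodicity of $g$ (i.e.\ $\int_0^T g'\,dx=0$) fixes $C$ via
\[
CT=-\int_0^T\vp_c f\,dx+\f{1}{2w}\int_0^T\vp_c^2\,dx.
\]

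Substituting $g'$ back into the first equation and using the profile equation \eqref{k:5} in the form $\vp_c''=\vp_c/w-\vp_c^3/(2w^2)$ reduces the system to the scalar equation
\[
\cL f=\f{(1+C)\vp_c}{w}-\f{\vp_c^3}{w^2},\qquad \cL:=-w\p_x^2+1-\f{3\vp_c^2}{2w}.
\]
Two identities already established inside the proof of Proposition \ref{prop:k1}---namely $\cL\vp_c=-\vp_c^3/w$ (direct from the profile equation) and $\cL^{-1}\vp_c=w\,\p_w\vp_c-\vp_c$ (obtained by differentiating the profile equation in $w$), both modulo $\mathrm{Ker}(\cL)=\mathrm{span}\{\vp_c'\}$---give the closed form
\[
f=(1+C)\,\p_w\vp_c-\f{C}{w}\vp_c\pmod{\vp_c'}.
\]
Feeding this back into the defining relation for $C$, and using $I(w):=\int_0^T\vp_c^2\,dx=\f{16w^2E(\ka)K(\ka)}{T}$ together with $\int_0^T\vp_c\,\p_w\vp_c\,dx=\f{1}{2}\p_w I$, produces $C$ as an explicit rational combination of $I$ and $\p_w I$.

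With $f$ and $g'$ in hand, the inner product computes (after integration by parts on the second summand) to
\[
\dpr{\ch^{-1}\psi_0'}{\psi_0'}=\int_0^T f\vp_c''\,dx+\f{1}{2w}\int_0^T g'\vp_c^2\,dx.
\]
Inserting the closed forms for $f$, $g'$, and $\vp_c''$ reduces every remaining integral to a combination of the moments $I=\int\vp_c^2$, $J=\int\vp_c^4$ and their $w$-derivatives; the quartic moment $J$ is computable in closed form from standard integrals of powers of $\mathrm{dn}$ together with the period formulas \eqref{k:10}--\eqref{k:10c} and $\p_w\ka$ from \eqref{k:19b}. Assembling all the pieces produces the identity $\dpr{\ch^{-1}\psi_0'}{\psi_0'}=-N(\ka)/w$ for a concrete expression $N(\ka)$ in $K(\ka),E(\ka)$ and their $\ka$-derivatives---this is the definition \eqref{z001}---and then $\mu^*(\ch)=\f{1}{2\sqrt{-\dpr{\ch^{-1}\psi_0'}{\psi_0'}}}=\f{\sqrt{w}}{2\sqrt{N(\ka)}}$ follows immediately on the set $\{N>0\}$.

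The principal obstacle is then purely analytic: establishing that $N(\ka)$ changes sign exactly once on $(0,1)$, at the threshold $\ka_0\approx 0.937095$. Following the graphical strategy used repeatedly in the paper (cf.\ Figures \ref{fig7}--\ref{fig9}), I would plot $N(\ka)$ and read off the unique sign change at $\ka_0$. A purely analytic monotonicity or unique-zero argument for $N$ would require delicate inequalities among $K(\ka),E(\ka)$ and their derivatives, but the graphical verification is sufficient for the conclusion.
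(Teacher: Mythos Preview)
Your proposal is correct and follows essentially the same route as the paper's proof: integrate the second equation for $g'$, reduce to a scalar equation $\cL f=\cdots$, solve it using the identities $\cL^{-1}\vp_c=w\,\p_w\vp_c-\vp_c$ and $\cL\vp_c=-\vp_c^3/w$, determine the integration constant from $\int_0^T g'=0$, evaluate the pairing as $\langle\vp_c'',f\rangle+\langle g',\vp_c^2/(2w)\rangle$ in terms of the $\mathrm{dn}$-moments, and finish with a graphical verification of the sign of $N(\ka)$. The only cosmetic difference is that the paper eliminates the cubic term via the identity $\vp_c''=\cL(\p_w\vp_c)+\vp_c^3/(2w^2)$ (equation \eqref{k:17}) rather than inverting $\vp_c^3$ directly through $\cL\vp_c=-\vp_c^3/w$; the two manipulations are equivalent and lead to the same closed form $f=(1-c_1)\p_w\vp_c+(c_1/w)\vp_c$ with your $C=-c_1$.
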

Assuming the validity of Proposition \ref{prop:k15}, we now finish the proof of Theorem \ref{theo:10}. To that end, observe that since  $\dpr{\ch^{-1}\psi_0'}{\psi_0'}<0, \ka\in (0,\ka_0)$, we have instability whenever $\ka\in (0,\ka_0)$. For $\ka \in (\ka_0, 1)$, we need to solve the inequality
$$
1>|c|\geq \f{\sqrt{w}}{2 \sqrt{N(\ka)}} = \f{\sqrt{1-c^2}}{2 \sqrt{N(\ka)}}
$$
which results in the following necessary and sufficient condition for linear stability
$$
1>|c|\geq \f{1}{\sqrt{1+4 N(\ka)}}, \ka \in (\ka_0, 1).
$$
This is exactly the statement of Theorem \ref{theo:10}.
\subsection{Proof of Proposition \ref{prop:k15}}

Now we will estimate the index of stability $\langle \ch^{-1} \psi_0', \psi_0'\rangle$,
$$
\psi_0=m\left( \begin{array}{cc}
  \vp_c'\\
  \\
  -{\frac{\vp_c^2}{2w}}+{\frac{1}{2wT}}\int_{0}^{T}{\vp_c^2}dx.
  \end{array}\right).
  $$
where $m$ is so that $\|\psi_0\|=1$.  Thus, we need to compute $ \ch^{-1}\left( \begin{array}{cc}
  \vp_c''\\
  -({\frac{\vp_c^2}{2w}})'.
  \end{array}\right)$ We have
    $$\left| \begin{array}{ll}
    -wf''+f-{\frac{\vp_c^2}{2w}}f+\vp_cg'=\vp_c'' \\
    \\
    -(\vp_cf)'-wg''=-\left( {\frac{\vp_c^2}{2w}}\right)'\end{array}
    \right. $$
    Integrating once in the second equation yields
      \begin{equation}\label{k:15a}
      g'={\frac{\vp_c^2}{2w^2}}+{\frac{c_1}{w}}-{\frac{\vp_cf}{w}},
      \end{equation}
      where $c_1$ is a constant of integration and needs to be determined.
The first equation becomes
        $$-wf''+f-{\frac{3\vp_c^2}{2w}}f+{\frac{\vp_c^3}{2w^2}}+{\frac{c_1\vp_c}{w}}=\vp_c''$$
        or
        \begin{equation}\label{k:16}
        \cL f+{\frac{\vp_c^3}{2w^2}}+{\frac{c_1\vp_c}{w}}=\vp_c''.
        \end{equation}
        On the other hand, taking derivative with
         respect to $w$ in (\ref{k:5}) yields
        \begin{equation}\label{k:17}
        \vp_c''=\cL{\frac{d\vp_c}{dw}}+{\frac{\vp_c^3}{2w^2}}.
        \end{equation}
        From (\ref{k:16}) and (\ref{k:17}), we have
          $$\cL\left(f-{\frac{d\vp_c}{dw}}\right)=-{\frac{c_1}{w}}\vp_c$$
          and hence
          \begin{equation}\label{k:18}
            f={\frac{d\vp_c}{dw}}-{\frac{c_1}{w}}\cL^{-1}\vp_c=(1-c_1){\frac{d\vp}{dw}}+{\frac{c_1}{w}}\vp_c.
          \end{equation}
Plugging this in (\ref{k:15a}) and integrating, we get
           \begin{equation}\label{k:19}
               c_1={\frac{\int_{0}^{T}{\vp_c{\frac{d\vp_c}{dw}}}dx-{\frac{1}{2w}}\int_{0}^{T}{\vp_c^2}dx}{T+{\frac{1}{w}}\langle\vp_c, \cL^{-1}\vp_c\rangle}}=
               {\frac{\int_{0}^{T}{\vp_c{\frac{d\vp_c}{dw}}}dx-{\frac{1}{2w}}\int_{0}^{T}{\vp_c^2}dx}
               {T+\int_{0}^{T}{\vp_c{\frac{d\vp_c}{dw}}}dx-{\frac{1}{w}}\int_{0}^{T}{\vp_c^2}dx}}.
            \end{equation}

              Using that $\int_{0}^{K(\kappa)}\vp_c^2=\f{16 w^2}{T} E(\kappa)K(\ka)$ and
              \eqref{k:19b}, we have
                \begin{eqnarray*}
                  \int_{0}^{T}{\vp_c{\frac{d\vp_c}{dw}}dx} &=& {\frac{1}{2}}{\frac{d}{dw}}\int_{0}^{T}{\vp_c^2}dx={\frac{1}{2}}{\frac{d}{dw}}[{\frac{16w^2}{T}}
                  E(\kappa)K(\kappa)] = \\
                  &=& {\frac{8w}{T}}\left[
                  2E(\kappa)K(\kappa)-{\frac{(2-\kappa^2)K^2(\kappa){\frac{d}{d\kappa}}[E(\kappa)K(\kappa)]}{{\frac{d}{d\kappa}}[(2-\kappa^2)K^2(\kappa)]}}\right].
                \end{eqnarray*}
The above formula allow us
 to  express  $c_1$   as a function of $\kappa$ only
 \begin{eqnarray*}
 c_1 &=& \frac{2E(\kappa)K(\kappa)\frac{d}{d\kappa}[(2-\kappa^2)K^2(\kappa)]-2(2-\kappa^2)K^2(\kappa)\frac{d}{d\kappa}[E(\kappa)K(\kappa)]}
                  {(2-\kappa^2)K^2(\kappa)\frac{d}{d\kappa}[(2-\kappa^2)K^2(\kappa)]-2(2-\kappa^2)K^2(\kappa)\frac{d}{d\kappa}[E(\kappa)K(\kappa)]}\\
                  \\
                  &=&{\frac{(2-\kappa^2)E^2(\kappa)-8(1-\kappa^2)E(\kappa)K(\kappa)+2(1-\kappa^2)(2-\kappa^2)K^2(\kappa)}
                  {2(2-\kappa^2)^2E(\kappa)K(\kappa)-2(1-\kappa^2)(2-\kappa^2)K^2(\kappa)-2(2-\kappa^2)E^2(\kappa)}}.
            \end{eqnarray*}

Now,
    $$\begin{array}{ll}
    \left\langle \ch^{-1}\psi_0', \psi_0'\right\rangle&=m^2\left\langle \ch \left(\begin{array}{cc} f\\g\end{array}\right),
     \left(\begin{array}{cc} f\\g\end{array}\right)\right\rangle=m^2\left\langle \left(\begin{array}{cc} \vp_c''\\-({\frac{\vp_c^2}{2w}})'\end{array}\right),
    \left(\begin{array}{cc} f\\g\end{array}\right)\right\rangle \\
    \\
    &=m^2\left(\langle\vp_c'',f\rangle+\langle g', {\frac{\vp_c^2}{2w}}\rangle \right).\end{array}$$
    From (\ref{k:5}) and the expression for $f$ and $g'$, we get
      $$ \langle \vp_c'', f\rangle ={\frac{1}{T}}(2J_2-J_3-J_4+J_5), \quad \langle g', {\frac{\vp_c^2}{2w}}\rangle={\frac{1}{T}}(J_1+J_2-J_3-J_4), $$
      where
\begin{eqnarray*}
I_1 &=& \int_{0}^{K(\kappa)}{dn^4(y, \kappa)}dy=\frac{4-2k^2}{3}E(k)-\frac{1-k^2}{3}K(k); \\
          J_1 &=& {\frac{T}{2w^3}}\langle \vp_c^2, \vp_c^2\rangle=16{\frac{K(\kappa)}{2-\kappa^2}}I_1;
          \\
          \\
          J_2 &=& {\frac{Tc_1}{2w^2}}\langle \vp_c, \vp_c\rangle=8c_1E(\kappa)K(\kappa);
          \\
          \\
          J_3 &=& {\frac{T(1-c_1)\langle \vp_c{\frac{d\vp_c}{dw}}, \vp_c^2\rangle}{2w^2}} =8(1-c_1)\left[ {\frac{3K(\kappa)}{2-\kappa^2}}I_1
          -{\frac{(2-\kappa^2)K^2(\kappa)}{{\frac{d}{d\kappa}}[(2-\kappa^2)K^2(\kappa)]}}{\frac{d}{d\kappa}}\left[ {\frac{K(\kappa)}{2-\kappa^2}}I_1\right]\right];\\
          \\
          J_4 &=& {\frac{Tc_1}{2w^3}}\langle \vp_c^2, \vp_c^2\rangle=32c_1{\frac{K(\kappa)}{2-\kappa^2}}I_1;\\
          \\
          J_5 &=&
          {\frac{T(1-c_1)}{w}}\langle \vp_c, {\frac{d\vp_c}{dw}}\rangle=8(1-c_1)
          \left[
          2K(\kappa)E(\kappa)-{\frac{(2-\kappa^2)K^2(\kappa){\frac{d}{d\kappa}}[E(\kappa)K(\kappa)]}{{\frac{d}{d\kappa}}[(2-\kappa^2)K^2(\kappa)]}}\right].
          \end{eqnarray*}
In addition, we have
          $${\frac{1}{m^2}}=\|\left( \begin{array}{cc}  \vp_c'\\
          -{\frac{\vp_c^2}{2w}}+{\frac{1}{2wT}}\int_{0}^{T}{\vp_c^2}dx
          \end{array}\right) \|^2=\langle \vp_c', \vp_c'\rangle+{\frac{1}{4w^2}}\langle \vp_c^2, \vp_c^2\rangle-{\frac{1}{4w^2T}}\left( \int_{0}^{T}{\vp_c^2}dx\right)^2$$
        Using that $dn'(y)=-\kappa^2sn(y)cn(y)$ and
        $sn^2(y)+cn^2(y)=1$, we get
          \begin{eqnarray*}
          \langle \vp_c', \vp_c'\rangle &=& \vp_1^2\alpha
          \kappa^4\left[ \int_{0}^{2K(\kappa)}{sn^2(y, \kappa)}dy-\int_{0}^{2K(\kappa)}{sn^4(y,
          \kappa)}dy\right]=\\
          \\
          &=& {\frac{8w}{3T}}{\frac{K(\kappa)}{2-\kappa^2}}[2(2-\kappa^2)E(\kappa)-4(1-\kappa^2)K(\kappa)].
          \end{eqnarray*}
          and
          \begin{eqnarray*}
            {\frac{1}{m^2}}&= & {\frac{w}{T}}\left[
            {\frac{8K(\kappa)}{3(2-\kappa^2)}}[2(2-\kappa^2)E(\kappa)-4(1-\kappa^2)K(\kappa)]+{\frac{16K(\kappa)}{2-\kappa^2}}I_1-
            {\frac{16}{2-\kappa^2}}E^2(\kappa)\right]=\\
            \\
            &=& {\frac{16 w}{T}}\left[E(\kappa)K(\kappa)-{\frac{1-\kappa^2}{2-\kappa^2}}K^2(\kappa)-{\frac{1}{2-\kappa^2}}E^2(\kappa)\right].
            \end{eqnarray*}
         Combining the above relations yields
\begin{equation}
\label{z001}
         \left\langle \ch^{-1}\psi_0', \psi_0'\right\rangle ={\frac{J_1+3J_2-2J_3-2J_4+J_5}{16 w\left[
            E(\kappa)K(\kappa)-{\frac{1-\kappa^2}{2-\kappa^2}}K^2(\kappa)-{\frac{1}{2-\kappa^2}}E^2(\kappa)\right]}}\\
             :={-\frac{N(\kappa)}{w   }}
\end{equation}
\begin{figure}[h10]
\centering
\includegraphics[width=8cm,height=6cm]{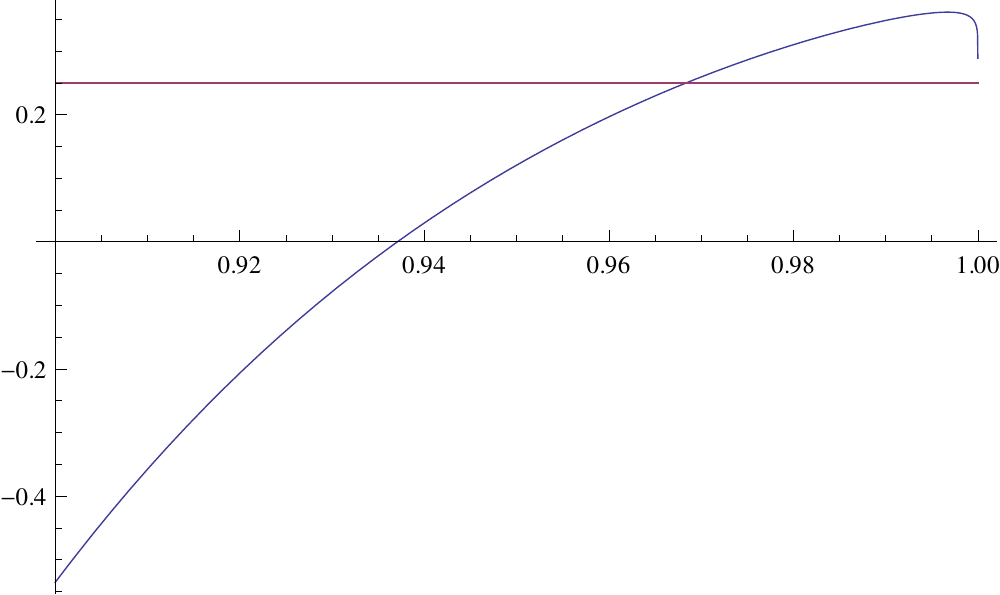}
\caption{The  function $N(\ka)$, together with $\f{1}{4}$. Recall that for stability, one needs $\left\langle \ch^{-1}\psi_0', \psi_0'\right\rangle<0$ and hence $N(\ka)>0$}
\label{fig10}
\end{figure}

\end{document}